\providecommand{\U}[1]{\protect\rule{.1in}{.1in}}
\newcommand{\R}{\mathbb{R}}
\theoremstyle{plain}
\newtheorem{algorithm}{Algorithm}[section]
\newtheorem{thm}{Thm}
\newtheorem{corollary}[algorithm]{Corollary}
\newtheorem{definition}[algorithm]{Definition}
\newtheorem{example}[algorithm]{Example}
\newtheorem{lemma}[algorithm]{Lemma}
\newtheorem{theorem} [algorithm] {Theorem}
\newtheorem{theoremlet}[thm]{Theorem}
\newtheorem{theoremlet'}[thm]{Theorem$'$}
\newtheorem{proposition}[algorithm]{Proposition}
\def\bdm{\begin{displaymath}}
\def\edm{\end{displaymath}}
\def\beq{\begin{equation}}
\def\eeq{\end{equation}}
\def\bes{\begin{equation*}}
\def\ees{\end{equation*}}
\def\epcm{\end{picture}\end{center}\end{minipage}}
\def\bpcm{\begin{minipage}{80pt}\begin{center}\begin{picture}}
\def\t2{T^2}
\def\f4{F_4}
\def\g2{G_2}
\def\dim{\textrm{dim}}
 \numberwithin{equation}{section}
  \numberwithin{figure}{section}
\let\above\@@above
\let\abovewithdelims\@@abovewithdelims
\let\over\@@over
\let\overwithdelims\@@overwithdelims
\let\atop\@@atop
\let\atopwithdelims\@@atopwithdelims
\let\mathcal=\relax
\let\temp@index\index
\def\index{\ifmmode\mathop{\rm index}\nolimits\else\temp@index\fi}
\def\cal{\fam2}
\gdef\SetFigFont#1#2#3#4#5{%
  \reset@font\fontsize{#1}{#2pt}%
  \fontfamily{#3}\fontseries{#4}\fontshape{#5}%
  \selectfont}%
\def\Grass{\mathop{\rm Gr}\nolimits}
\def\RiemScal{\mathop{\cal R}\nolimits^+}
\def\Riem{{\cal R}}
\def\tor{\mathop{\rm torp}\nolimits}
\def\toe{\mathrm{toe}}
\def\boot{\mathrm{boot}}
\def\std{\mathrm{std}}
\def\bend{\mathrm{bend}}
\newcommand{\stret}{\mathrm{stretch}}
\newcommand{\p}{\partial}
\def\R{\mathbb{R}}
\newcommand{\msH}{\mathscr{H}}
\newcommand{\msV}{\mathscr{V}}
\newcommand{\Bb}{{\textcolor{blue}{\bf B}}}
\newcommand{\Ff}{{\textcolor{red}{\bf F}}}
\newcommand{\Diff}{\mathrm{Diff}}
\let\textbar\=
\def\=#1{\ifmmode\bar{#1}\else\textbar#1\fi}
\gdef\SetFigFont#1#2#3#4#5{%
  \reset@font\fontsize{#1}{#2pt}%
  \fontfamily{#3}\fontseries{#4}\fontshape{#5}%
  \selectfont}%
\begin{document}
\newcommand{\comment}[1]{\vspace{5 mm}\par \noindent
\marginpar{\textsc{Note}}
\framebox{\begin{minipage}[c]{0.95 \textwidth}
#1 \end{minipage}}\vspace{5 mm}\par}

\title[Positive ${\bf (p, n)}$-intermediate scalar curvature and cobordism]{Positive  $(p, n)$-intermediate scalar curvature and cobordism}

\author[Burkemper]{Matthew Burkemper}
\address[Burkemper]{Department of Mathematics, Statistics, and Physics, Wichita State University, Wichita, Kansas}
\email{burkemper@math.wichita.edu}

\author[Searle]{Catherine Searle}
\address[Searle]{Department of Mathematics, Statistics, and Physics, Wichita State University, Wichita, Kansas}
\email{searle@math.wichita.edu}

\author[Walsh]{Mark Walsh}
\address[Walsh]{Department of Mathematics, National University of Maynooth, Ireland}
\email{Mark.Walsh@mu.ie}

\subjclass[2000]{Primary: 53C20; Secondary: 57S25} 

\date{\today}

%

\begin{abstract} 

In this paper we consider a well-known construction due to Gromov and Lawson, Schoen and Yau, Gajer, and Walsh which allows for the extension of a metric of positive scalar curvature over the trace of a surgery in codimension at least $3$ to a metric of positive scalar curvature which is a product near the boundary. We generalize this construction to work for $(p,n)$-intermediate scalar curvature for $0\leq p\leq n-2$ for surgeries in codimension at least $p+3$. We then use it to generalize a well known theorem of Carr. Letting $\Riem^{s_{p,n}>0}(M)$ denote the space of positive $(p, n)$-intermediate scalar curvature metrics on an $n$-manifold $M$, we show for $0\leq p\leq 2n-3$ and $n\geq 2$, that for a closed, spin, $(4n-1)$-manifold $M$ admitting a metric of positive $(p,4n-1)$-intermediate scalar curvature,
$\Riem^{s_{p,4n-1}>0}(M)$ has infinitely many path components.
\end{abstract}
\maketitle

\section{Introduction}

A central question in modern geometry is the following: given a smooth $n$-dimensional manifold $M$ and a preferred curvature condition $C$, can we specify a Riemannian metric $g$ on $M$ so that $g$ satisfies $C$? The scalar curvature is the weakest invariant of the curvature tensor and so, unsurprisingly, the greatest success has been achieved in classifying which manifolds admit metrics of positive scalar curvature (psc-metrics). By contrast, the problem of classifying which manifolds admit metrics whose sectional curvature is strictly positive, or finding new examples of manifolds admitting such metrics, is formidably hard. 

Such classification problems require obstructive tools for ruling out certain manifolds from consideration and constructive tools for building metrics in the case no obstructions exist. Despite its relative weakness as a curvature constraint, there exist many smooth manifolds which do not admit metrics of positive scalar curvature. For example, work of Schr{\"o}dinger and Lichnerowicz \cite{lichnerowicz} and Hitchin \cite{Hit}, shows that if $M$ is a closed, spin manifold of dimension $n$ admitting a psc-metric, a certain invariant $\alpha(M)\in KO^{-n}$, representing the index of the Dirac operator and generalizing the $\hat{A}$-genus, must vanish. This obstructive tool is complemented by a powerful constructive result due to Gromov and Lawson \cite{gromov-lawson} and Schoen and Yau \cite{schoen-yau}: suppose $M$ and $M'$ are smooth manifolds and $M'$ is obtained from $M$ via surgery in codimension at least three, then any psc-metric $g$ on $M$ can be used to construct a psc-metric $g'$ on $M'$. Combining these respective obstructive and constructive tools led to considerable progress in classifying which manifolds admitted psc-metrics. In particular, Stolz \cite{stolz} showed that a closed, smooth, simply connected manifold $M$ admits a psc-metric if and only if $M$ is either non-spin, or $M$ is spin and $\alpha(M)=0$.

The surgery technique breaks down when one attempts to apply it to stronger notions like positive Ricci or sectional curvatures. Indeed, not only does the construction not work for positive Ricci or positive sectional curvature, there are topological obstructions to the existence of metrics of positive Ricci curvature on manifolds arising from surgeries which are admissible in the positive scalar curvature setting.

It is thus natural to consider intermediate curvatures interpolating between  scalar and Ricci, and scalar and sectional curvatures. These intermediate curvatures are the $k$-positive Ricci curvature, where $k\in\{1,2,\cdots, n\}$, defined by Wolfson in \cite{Wolfson}, and the $(p,n)$-intermediate scalar curvatures (originally, the $p$-curvature), where $p\in\{0,1,\cdots, n-2\}$ defined by Labbi in \cite{labbi-surgery}; see Definition \ref{spn}. Analogs of the Surgery Theorem for these respective curvatures have been proven in \cite{Wolfson} and \cite{labbi-surgery}. In particular, it is shown in \cite{labbi-surgery} that if $M$ and $M'$ are smooth $n$-dimensional manifolds and $M'$ is obtained from $M$ via surgery in codimension at least $p+3$, then any metric, $g$, on $M$ admitting positive $(p,n)$-intermediate scalar curvature can be used to construct a metric, $g'$, on $M'$ which also admits positive $(p,n)$-intermediate scalar curvature. This is a generalization of the Surgery Theorem of \cite{gromov-lawson} and \cite{schoen-yau} stated above, where the scalar curvature corresponds to the case when $p=0$.

In this paper, we focus our attention on the $(p,n)$-intermediate scalar curvature.  Our main result, Theorem \ref{A}, concerns an analogous strengthening of a more general positive scalar curvature construction. The surgery technique of \cite{gromov-lawson} and \cite{schoen-yau} can be enhanced to give rise to a psc-metric on the trace of a surgery in codimension $\geq 3$ by work of Gajer \cite{gajer} and Walsh \cite{walsh1}. In particular, the resulting metric satisfies a Riemannian product structure on a collar neighborhood of the boundary. In Theorem \ref{A}, we extend the result of \cite{gajer} and \cite{walsh1} from the case when $p=0$ to more general $p$ as follows. 
\\

\begin{theoremlet}\label{A} Let $M$ be a smooth $n$-manifold, $\phi:S^{k}\times D^{n-k}\to M$, a smooth embedding, and $\{\bar{M}_{\phi}; M, M_{\phi}\}$, the trace of the surgery on $\phi$. Suppose furthermore that $n-k\geq 3$ and $p\in\{0, 1, \cdots, n-k-3\}$. Then for any metric $g$ on $M$ with positive $(p,n)$-intermediate scalar curvature, there are metrics $g_{\phi}$ on $M_{\phi}$ and $\bar{g}_{\phi}$ on $\bar{M}_{\phi}$ satisfying:
\begin{enumerate}
\item The metrics $g_{\phi}$ and $\bar{g}_{\phi}$ have respectively positive $(p,n)$ and $(p,n+1)$-intermediate scalar curvatures on $M_{\phi}$ and $\bar{M}_{\phi}$; and
\item Near the boundary components $M$ and $M_{\phi}$, $\bar{g}_{\phi}$ takes the form of respective products $\bar{g}_{\phi}=g+dt^{2}$ and $\bar{g}_{\phi}=g_{\phi}+dt^{2}$.
\end{enumerate}\end{theoremlet}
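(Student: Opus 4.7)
The plan is to adapt the Gajer--Walsh trace construction, originally developed for positive scalar curvature ($p=0$), to the $(p,n)$-intermediate scalar curvature setting, taking the Labbi surgery theorem as the core technical input that replaces Gromov--Lawson--Schoen--Yau in the classical argument.

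First, I would obtain $g_\phi$ on $M_\phi$ by applying Labbi's surgery theorem, which requires the codimension hypothesis $n-k \geq p+3$ (and hence is compatible with the stronger hypothesis $p \leq n-k-3$ of the theorem). As in the classical Gromov--Lawson proof, this construction proceeds from a deformation supported in a tubular neighborhood of $\phi(S^k \times \{0\})$, after which the metric takes a standard torpedo form on the $D^{n-k}$ factor and agrees with $g$ outside the neighborhood. This settles the existence of $g_\phi$ and half of condition (1).

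The main new content is the construction of $\bar g_\phi$ on the trace $\bar M_\phi$. View $\bar M_\phi$ as the cylinder $M \times [0,1]$ together with the handle $H = D^{k+1} \times D^{n-k}$ attached along $\phi(S^k \times D^{n-k}) \times \{1\}$, and build the metric in pieces. Away from the handle I would use a product $g + dt^2$ (after a product-preserving preliminary adjustment of $g$, the natural analogue of Walsh's ``fattening'' step, to ensure positivity of $s_{p,n+1}$ on the cylinder). On $H$ and a neighborhood of its attaching region I would use a \emph{bent torpedo} metric, roughly of the form $g_{\mathrm{torp}}^{k+1} + g_{\mathrm{torp}}^{n-k}$, shaped so that on one end it matches the cylinder metric and on the other it matches $g_\phi + dt^2$; standard concave cutoffs near $t=0$ and $t=1$ then produce the exact product collars required by condition (2).

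The main obstacle will be verifying that the bent metric on the handle has positive $(p,n+1)$-intermediate scalar curvature. For each $p$-plane $V$ at a point of the bending region, one must show that the sum of sectional curvatures over the orthogonal $(n+1-p)$-dimensional complement is strictly positive. The bending introduces sectional curvatures of the wrong sign on planes tangent to the bending direction, and these must be dominated by the large positive sectional curvatures supplied by the $D^{n-k}$ torpedo factor. The codimension condition $p \leq n-k-3$ enters here in an essential way: it guarantees that $V^\perp$ always meets the torpedo fibre in a subspace of dimension at least three, on which the torpedo supplies enough strictly positive sectional curvatures to swamp the contribution of the bending. Carrying out this sectional-curvature bookkeeping carefully, in the spirit of Labbi's surgery calculation but adapted to the cobordism geometry, is where essentially all of the work lies.
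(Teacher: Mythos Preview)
Your overall strategy matches the paper's, but you have misplaced where the real difficulty lies on the cylinder and underestimated one of the key ingredients.

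The step you describe as ``a product-preserving preliminary adjustment of $g$ \ldots\ to ensure positivity of $s_{p,n+1}$ on the cylinder'' is not the issue: a pure product $g+dt^{2}$ with $s_{p,n}(g)>0$ already has $s_{p,n+1}>0$. The actual problem is that condition (2) forces $\bar g_\phi=g+dt^{2}$ near the $M$-end, while the handle can only be attached where the metric is already in the standard form $ds_k^{2}+g_{\tor}^{\ell+1}+dt^{2}$. You therefore need a metric on $M\times[0,L]$ which is $g+dt^{2}$ near one end, $g_{\std}+dt^{2}$ near the other, and has $s_{p,n+1}>0$ throughout. The paper obtains this in two steps: first, an $(s_{p,n}>0)$-\emph{isotopy} $g\rightsquigarrow g_{\std}$ supplied by Kordass's parametrized surgery lemma; second, a general ``isotopy implies concordance'' result for $s_{p,n}>0$ metrics, which the paper proves from scratch (this is not in the literature for $p>0$ and requires its own curvature estimate for metrics of the form $g_{f(t)}+dt^{2}$). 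Your sketch does not account for either of these, and without them there is no mechanism for producing the required product collar at the $M$-end.

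On the handle side your instincts are correct, and the paper does essentially what you describe, though the explicit model is a ``boot metric'' $g_{\boot}^{\ell+2}(\delta)_{\Lambda}$ rather than a single bent torpedo: a toe piece, a bend through a quarter-circle of radius $\Lambda$, a straight torpedo cylinder, and a flat rectangular piece, glued so that one face matches $g_{\std}+dt^{2}$ and the other matches $g_{\phi}+dt^{2}$. The codimension condition $p\leq n-k-3$ is used exactly as you say, to guarantee that the leading $\frac{1-\beta_r^{2}}{\beta^{2}}$ term in $s_{p,n+2}$ has a nonzero coefficient $(n-p)(n-p-1)$ which dominates the single negative term coming from the bend once $\Lambda$ is large.
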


An important application of this theorem is in exhibiting non-triviality in the topology of the space of Riemannian metrics of positive $(p, n)$-intermediate scalar curvature on a smooth manifold. We denote by $\Riem(M)$, the space of all Riemannian metrics on the smooth manifold $M$. This space has a standard $C^{\infty}$ topology; see Chapter 1 of \cite{TW} for a definition. For each $p\in\{0,1,\cdots n-2\}$, we consider the subspace $\Riem^{s_{p,n}>0}(M)$ of Riemannian metrics of positive $(p,n)$-intermediate curvature on $M$.  In the case when $p=0$, the space $\Riem^{s_{0,n}>0}(M)$ denotes the space of psc-metrics on $M$. 

More generally, one may consider, for any curvature condition $C$, the subspace $\Riem^{C}(M)\subset \Riem(M)$ of Riemannian metrics which satisfy $C$. In recent years, there has been substantial interest in understanding the topology of the space, $\Riem^{C}(M)\subset \Riem(M)$, for a variety of manifolds $M$ and curvature conditions $C$. Much of this has also involved the corresponding moduli spaces obtained as a quotient of $\Riem^{C}(M)$ by the action of appropriate subgroups of the group of self-diffeomorphisms of $M$, $\Diff (M)$. Recall that $\Diff (M)$ acts on $\Riem^{C}(M)$ by means of pulling back metrics. The most progress has occurred in the case when $C$ denotes positive scalar curvature; see for example results due to Botvinnik, Ebert, and Randall-Williams \cite{BERW}, Botvinnik, Hanke, Schick, and Walsh \cite{BHSW}, Coda-Marquez \cite{Coda}, Crowley and Schick \cite{CS}, Ebert and Randall-Williams \cite{ERW1} and \cite{ERW2}, Ebert and Wiemeler \cite{EW}, Frenck \cite{Frenck}, Hanke,  Schick, and Stiemle  \cite{HSS}, and  Walsh \cite{walshloops}. There are numerous results for other curvature conditions such as negative sectional curvature or positive Ricci curvature, see, for example, Tuschmann and Wraith \cite{TW}.

Theorem \ref{A} can be used to exhibit non-triviality in the topology of this space for many manifolds and many $p\geq 0$ by strengthening existing results for positive scalar curvature $(p=0)$. We will not provide a comprehensive account of this here but rather, in Theorem \ref{B}, an example which illustrates this point.
\\
 
\begin{theoremlet}\label{B}
Let $M$ be a smooth closed spin manifold of dimension $4n-1, n\geq 2$, which admits an $s_{p,4n-1}>0$ curvature metric for some $p\in\{0,1,\cdots, 2n-3\}$. Then the space $\Riem^{s_{p,4n-1}>0}(M)$ has infinitely many path components.
\end{theoremlet}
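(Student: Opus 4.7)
\textbf{Proof sketch of Theorem \ref{B}.} The plan is to adapt Carr's classical argument for the infinitude of path components of $\Riem^{s > 0}(M)$ on closed spin $(4n-1)$-manifolds to the intermediate scalar curvature setting, using Theorem \ref{A} as the key geometric input. Since $s_{p,n} > 0$ implies $s > 0$, there is a continuous inclusion $\Riem^{s_{p, 4n-1} > 0}(M) \hookrightarrow \Riem^{s > 0}(M)$ inducing a map on $\pi_0$, under which two metrics path-connected in the former remain so in the latter. It thus suffices to produce metrics in $\Riem^{s_{p, 4n-1} > 0}(M)$ representing distinct path components of $\Riem^{s > 0}(M)$.

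I would construct such metrics following Carr's original strategy. Fix a closed, $(2n-1)$-connected spin $4n$-manifold $N$ with $\hat{A}(N) \neq 0$, for instance a suitable Bott manifold. For each $k \geq 1$, let $V_k$ be obtained from $\#_k N$ by removing an open $4n$-disk, so $\partial V_k = S^{4n-1}$, and $V_k$ admits a handle decomposition relative to $D^{4n}$ consisting only of $2n$-handles. Starting from a torpedo metric on $D^{4n}$ of positive $(p,4n)$-intermediate scalar curvature that is a product near the round boundary, I iteratively apply Theorem \ref{A} to each $2n$-handle attachment; each corresponds to a surgery on an embedded $S^{2n-1}$ in codimension $(4n-1) - (2n-1) = 2n$, and the hypothesis $p \leq 2n - 3$ ensures $2n \geq p + 3$. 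This yields a metric $H_k$ on $V_k$ of positive $(p, 4n)$-intermediate scalar curvature, product near $\partial V_k$, which restricts to a metric $\hat{g}_k$ on $S^{4n-1}$ of positive $(p, 4n-1)$-intermediate scalar curvature. Setting $g_k := g_0 \# \hat{g}_k$ on $M \# S^{4n-1} = M$ via the $0$-surgery special case of Theorem \ref{A} (applied to $M \sqcup S^{4n-1}$ with codimension $4n-1 \geq p + 3$) produces a sequence $\{g_k\}_{k \geq 1} \subset \Riem^{s_{p, 4n-1} > 0}(M)$.

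To show the $g_k$ lie in distinct path components of $\Riem^{s > 0}(M)$, I would argue by contradiction: if $g_0$ and $g_k$ were path-connected in $\Riem^{s > 0}(M)$, then a reparametrization of the connecting path (cf. \cite{gajer}, \cite{walsh1}) yields a psc-metric $G$ on $M \times I$ restricting to $g_0$ on $M \times \{0\}$ and to $g_k$ on $M \times \{1\}$, product near the boundary. Gluing $(V_k, H_k)$ to this concordance along the distinguished $\hat{g}_k$-region in the connect-sum decomposition of $g_k$, and then closing up via a second concordance cylinder, produces a closed spin $4n$-manifold $Z_k$ diffeomorphic to $(M \times S^1) \# (V_k \cup_{S^{4n-1}} D^{4n})$ carrying a psc-metric. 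By additivity of $\hat{A}$ under connect sum together with $\hat{A}(M \times S^1) = 0$, one computes $\hat{A}(Z_k) = k \hat{A}(N) \neq 0$, contradicting Lichnerowicz's theorem. Hence the $g_k$ represent infinitely many distinct path components of $\Riem^{s_{p, 4n-1} > 0}(M)$.

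The main technical obstacle lies in the final step, in the construction of $Z_k$ with a consistent psc-metric: the various gluings must align the metrics at every interface. This relies on the ``product near boundary'' output of Theorem \ref{A} together with the concordance framework of Gajer and Walsh, and amounts to a delicate but essentially routine adaptation of Carr's original construction to the intermediate scalar curvature setting.
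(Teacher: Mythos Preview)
Your approach is essentially Carr's argument, as in the paper, with one genuine simplification worth noting: by passing through the continuous inclusion $\Riem^{s_{p,4n-1}>0}(M)\hookrightarrow\Riem^{s>0}(M)$, you reduce the concordance step to the classical positive scalar curvature case. The paper instead works entirely within the $s_{p,n}>0$ category, proving directly (its Proposition~\ref{isotoconc}) that $(s_{p,n}>0)$-isotopy implies $(s_{p,n}>0)$-concordance. Your shortcut is legitimate and avoids that section of the paper; the paper's route has the advantage of yielding the slightly stronger conclusion that the metrics are not even $(s_{p,4n-1}>0)$-concordant. Your building block (a $(2n-1)$-connected spin $4n$-manifold $N$ with $\hat A(N)\neq 0$) is essentially the same object as Carr's closed-up $E_8$-plumbing manifold---though ``Bott manifold'' is not quite the right name outside dimension~$8$; the paper uses Carr's explicit plumbings and their $\alpha$-invariant.

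There is, however, a real logical gap in your final paragraph. Your contradiction argument compares $g_0$ with $g_k$ and shows they lie in different path components; from this you conclude ``the $g_k$ represent infinitely many distinct path components.'' That does not follow: nothing you have written rules out that $g_1, g_2, g_3,\ldots$ all lie in one common component, distinct from that of $g_0$, which would give only two components. The fix is to run the argument for an arbitrary pair $g_j$, $g_k$ with $j\neq k$: a hypothetical psc-concordance between them, capped on both ends by $(V_j,H_j)$ and $(V_k,H_k)$ via the connected-sum traces, assembles into a closed spin psc-manifold diffeomorphic to $(M\times S^1)\#(\#_j N)\#(\overline{\#_k N})$, whose $\hat A$-genus is $(j-k)\hat A(N)\neq 0$. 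This is exactly how the paper proceeds, always comparing two indices $q_0\neq q_1$ rather than comparing against the base metric, and your sketch should be amended accordingly.
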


Theorem \ref{B} generalizes Theorem 4 of Carr \cite{carr}, for positive scalar curvature. In particular, Theorem 4 of \cite{carr} is  the $p=0$ case of Theorem \ref{B} when $M=S^{4n-1}$ for $n\geq 2$. Note that extending the theorem from the case when $M=S^{4n-1}$ to an arbitrary closed, spin manifold admitting psc-metrics is not difficult. Indeed, it follows as an immediate corollary of the main theorem of  \cite{EW}. The main work is in dealing with the spherical case.

Note that the theorem of \cite{carr} was generalized to positive Ricci curvature by Wraith in \cite{Wraith} for $M$, a homotopy $(4n-1)$-sphere bounding a parallelizable manifold. This result was achieved by showing that the moduli space of $\Riem^{\mathrm{Ric}>0}(M)$ contains infinitely many path-components.
However, there is no clear implication relating positive $(p, n)$-intermediate scalar curvature to positive Ricci curvature. In fact, the methods used to prove Theorem \ref{B} are quite different from those used in \cite{Wraith}, where the Kreck-Stolz $s$-invariant is used to distinguish between the path components of the moduli space of $\Riem^{\mathrm{Ric}>0}(M)$.

During the writing of this paper we discovered that Theorem \ref{B} also follows as a case of a recent theorem of Frenck and Kordass \cite{FK}. Their work extends some powerful techniques of \cite{BERW}, for positive scalar curvature to the cases of positive $(p, n)$-intermediate scalar curvature and also positive $k$-Ricci curvature; see Theorems \ref{A} and \ref{B} of \cite{FK}. Our work has independent value, as  Theorem \ref{A} above provides a geometrically explicit construction of an $s_{p,n}>0$ metric over the trace of an appropriate surgery, something which is not done in \cite{FK}. In doing so, we also provide detailed curvature calculations for sectional and intermediate scalar curvature of various warped product metrics that  are of value in their own right.  Likewise, our proof of Theorem \ref{B} differs from theirs in that ours gives an explicit construction for representative elements of distinct path components of $\Riem^{s_{p,4n-1}>0}(M)$.

\subsection{Organization} The paper is organized as follows. In Section \ref{s2} we establish preliminaries. In Section \ref{s3} we establish an isotopy-concordance result for positive $(p, n)$-intermediate scalar curvature. In Section \ref{s4}, we determine the intermediate scalar curvature of a warped product metric. In Section \ref{s5}, we apply these calculations to the standard metrics on the sphere and the disk. In Section \ref{s6}, we prove Theorem \ref{A} and in Section \ref{s7}, we prove Theorem \ref{B}.

\subsection*{Acknowledgements}

The authors wish to thank Boris Botvinnik and David Wraith for helpful conversations.
M. Burkemper and C. Searle gratefully acknowledge  partial support by C. Searle's NSF Grant \#DMS-1906404.  C. Searle and M. Walsh   thank the Department of Mathematics at the National University of Ireland at Maynooth and  the Department of Mathematics at Wichita State University, respectively,  for their hospitality during visits where a portion of this work was completed.

\section{Preliminaries}\label{s2}
\subsection{$(p, n)$-Curvature}

We consider a generalization due to Labbi \cite{labbi-surgery} of the sectional and scalar curvatures which we call the $(p,n)$-intermediate scalar curvature and denote $s_{p,n}$. This was originally referred to as the $p$-curvature, $s_p$. However, we will often deal with cylinders and more general manifolds with boundary, where dimensions $n$ and $n-1$ arise in tandem. Thus, we adopt this term to aid the reader in distinguishing between the $p$-curvature on the ambient $n$-manifold and the $p$-curvature on an embedded submanifold or boundary component of dimension $n-1$.

Let $M$ be a smooth $n$-dimensional manifold, possibly with non-empty boundary. For any $x\in M$, $k\in\{0,1,\cdots, n\}$, we denote by $\Grass_k(T_x M)$, the Grassmann manifold of $k$-dimensional subspaces of the tangent space $T_x M$ and by $\Grass_k(M)$, the corresponding Grassmann bundle of $k$-dimensional subspaces obtained as the union of $\Grass_k(T_x M)$ over $x\in M$. We now define the {\em $(p,n)$-intermediate scalar curvature} of a Riemannian metric on $M$ as follows.

\begin{definition}\label{spn}
{\em Let $(M,g)$ be an $n$-dimensional Riemannian manifold (with possibly non-empty boundary) and let $p\in\{0,1,\cdots, n-2\}$. The {\em $(p,n)$-intermediate scalar curvature} of $M$ is the function $s_{p,n}:\Grass_p(M)\to\R$ defined for $x\in M$, $P$ a $p$-plane in $T_xM$ and $\{e_1, \cdots, e_{n-p}\}$, an orthonormal basis of the orthogonal complement $P^\perp$ of $P$ in $T_xM$, by 
$$s_{p,n}(x,P) := \sum_{i,j} K_x(e_i,e_j),$$
where $K_x(e_i, e_j)$ is the sectional curvature at $x$ of the subspace of $T_{x}M$ spanned by the vectors $e_i$ and $e_j$.}
\end{definition}

It follows that $s_{p,n}(x,P)$ is the scalar curvature at $x$ of the locally specified $n-p$-dimensional submanifold of $M$ given by restricting the exponential map of $g$ at $x$ to the subspace $P^{\perp}\subset T_{x}M$. In particular, it is well defined for any choice of orthonormal basis $\{e_1, \cdots, e_{n-p}\}$ for $P^{\perp}$. When $p=0$, $P^{\perp}= T_{x}M$ and so $s_{0,n}(x):=s_{0, n}(x, \bar{0})$ is precisely the scalar curvature of the Riemannian manifold $(M,g)$ at the point $x$. When $\dim P=p=n-2$, $s_{n-2, n}(x,P)$ is twice the sectional curvature at $x$ of the plane $P^{\perp}\subset T_{x}M$ with respect to $(M,g)$. The $(p,n)$-intermediate scalar curvatures for $0\leq p\leq n-2$ are therefore a collection of curvatures interpolating between the scalar curvature, when $p=0$, and (twice the) sectional curvature when $p=n-2$. For any given value of $p<n-2$, the $(p,n)$-intermediate scalar curvature is a trace of the $(p+1,n)$-intermediate scalar curvature. 

Of particular interest to us is the case when the $(p,n)$-intermediate scalar curvature is positive on a manifold. We say that a Riemannian metric $g$ on $M$ is a {\em metric of positive $(p,n)$-intermediate scalar curvature} (an {\em $s_{p,n}>0$ metric}) if for any $x\in M$ and any $P\subset T_x M$, $s_{p,n}(x, P)>0$. It is obvious that if the $(p,n)$-intermediate scalar curvature of $(M,g)$ is positive for $p>0$, then the $(p-1,n)$-intermediate scalar curvature is positive as well. By continuing to take traces, it is clear that this holds  for any integer $0\leq q\leq p$.  We summarize this hierarchy in the following proposition.

\begin{proposition}\label{hierarchyofcurvatures}
If a Riemannian manifold $(M,g)$ has positive $(p,n)$-intermediate scalar curvature, then it has positive $(q,n)$-intermediate scalar curvature for $0\leq q\leq p$.
\end{proposition}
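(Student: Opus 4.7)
The proposition is a direct formalization of the remark just preceding it, namely that the $(q,n)$-intermediate scalar curvature is, up to a positive factor, a trace of the $(q+1,n)$-intermediate scalar curvature. My plan is to prove the single induction step that positive $(q+1,n)$-intermediate scalar curvature implies positive $(q,n)$-intermediate scalar curvature for $0 \leq q \leq n-3$, and then iterate.

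Fix a point $x \in M$ and a $q$-plane $Q \subset T_xM$. Choose an orthonormal basis $\{e_1,\dots,e_{n-q}\}$ of $Q^\perp$. For each $k \in \{1,\dots,n-q\}$, consider the $(q+1)$-plane $P_k := Q \oplus \mathrm{span}(e_k)$. Then $P_k^\perp$ has $\{e_i : i \neq k\}$ as an orthonormal basis, so by Definition \ref{spn}
$$s_{q+1,n}(x, P_k) \;=\; \sum_{\substack{i \neq j \\ i, j \neq k}} K_x(e_i, e_j).$$
Summing over $k$ and counting the number of times each pair $(i,j)$ with $i \neq j$ appears, namely $|\{k : k \neq i, j\}| = n-q-2$, yields
$$\sum_{k=1}^{n-q} s_{q+1,n}(x, P_k) \;=\; (n-q-2) \sum_{i \neq j} K_x(e_i, e_j) \;=\; (n-q-2)\, s_{q,n}(x, Q).$$

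Assuming $s_{q+1,n}>0$ everywhere and $q \leq n-3$ (so that $n-q-2 \geq 1$), each summand on the left is positive, hence $s_{q,n}(x,Q) > 0$. Since $Q$ and $x$ were arbitrary, $s_{q,n} > 0$ on all of $\mathrm{Gr}_q(M)$.

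The proposition now follows by downward induction on $q$, starting at $q = p$ (where the hypothesis is given) and descending to $q = 0$. The restriction $q \leq n-3$ in the induction step is compatible with the hypothesis, since $p \leq n-2$ already forces $q < p \leq n-2$, i.e.\ $q \leq n-3$, whenever the inductive step is invoked. There is no real obstacle here beyond bookkeeping the combinatorial trace identity; the content of the proposition is entirely that a sum of positive numbers is positive.
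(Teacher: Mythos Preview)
Your proof is correct and follows precisely the trace argument the paper alludes to in the paragraph preceding the proposition; the paper itself does not spell out the details, but your combinatorial identity $\sum_k s_{q+1,n}(x,P_k) = (n-q-2)\,s_{q,n}(x,Q)$ is exactly the content of ``the $(p,n)$-intermediate scalar curvature is a trace of the $(p+1,n)$-intermediate scalar curvature.'' Your handling of the range of $q$ in the induction step is also correct.
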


Note that the converse is not true. For any dimension $n$ and any $0\leq p< n-2$, we can construct a Riemannian manifold that has positive $(p,n)$-intermediate scalar curvature, but not positive $(p+1,n)$-intermediate scalar curvature. In fact we only need to look at products of spheres.

\begin{example}\label{curvature of a product of spheres}
{\em Let $M$ be the $n$-dimensional Riemannian product manifold of $m$ standard round spheres of radius one and of dimension at least one. Then $M$ has positive $(p,n)$-intermediate scalar curvature if and only if $p<n-m$.}
\end{example}
\noindent This example can be extended to include other factors with positive sectional curvature.

\subsection{Isotopy and Concordance}

Various notions of isotopy and concordance arise throughout Mathematics. Here, we are chiefly concerned with metrics of positive $(p,n)$-intermediate scalar curvature and we define these notions in this case.

\begin{definition}
{\em Two metrics $g_0, g_1$ on an $n$-dimensional manifold $M$ with positive $(p,n)$-intermediate scalar curvature are said to be {\em $(s_{p,n}>0)$-isotopic} if they are connected by a path $t\mapsto g_{t}$ in the space of positive $(p,n)$-intermediate scalar curvature metrics on $M$, $t\in [0,1]$. The connecting path is called an {\em $(s_{p,n}>0)$-isotopy.}}
\end{definition}
\begin{definition}
{\em The metrics $g_0$ and $g_1$ on $M$ are said to be {\em $(s_{p,n}>0)$-concordant} if, for some $L>0$, there is a metric $\bar{g}$ on the cylinder $M\times [0,L+2]$, of positive $(p,n+1)$-curvature, and satisfying
$$\bar{g}|_{M\times [0,1]}=g_{0}+dt^{2}\hspace{0.3cm}\text{ and }\hspace{0.3cm}  \bar{g}|_{M\times [L+1,L+2]}=g_{1}+dt^{2}.$$ The metric $\bar{g}$ is known as an {\em $(s_{p,n}>0)$-concordance.} }
\end{definition}

We will frequently shorten $(s_{p,n}>0)$-isotopy and $(s_{p,n}>0)$-concordance to just isotopy and concordance. It is straightforward to show that both isotopy and concordance determine equivalence relations on the space of positive $(p,n)$-intermediate scalar curvature metrics on the manifold.

The problem of whether or not a given pair of concordant metrics are in turn isotopic is notoriously difficult and we do not consider it here. The converse problem however is much more tractable. It has long been known in the case of metrics of positive scalar curvature ($s_{0,n}>0$), that isotopic metrics are concordant. This indeed holds more generally, as we demonstrate in Proposition \ref{isotoconc}. 

\section{Isotopy Implies Concordance}\label{s3}

We start with an isotopy $g_r$ on $M$. To create a concordance from this isotopy, it seems natural to turn this into the metric $g_r+dr^2$ on $M\times[0,1]$. However, this metric does not necessarily have positive $(p,n+1)$-intermediate scalar curvature since, even though the metric $g_r$ on the slice $M\times\{r\}$ has positive curvature, there may be negative curvature coming from the $r$ direction. Therefore we will introduce a function $f:\R\to[0,1]$ and consider a new metric $g_{f(t)}+dt^2$ on $M\times\R$. This function will allow us to control the changes in the $t$ direction so we can minimize the contributions of negative curvature while keeping the positive contributions from the metric $g_{f(t)}$ on the slice $M\times\{f(t)\}$.

We begin with a lemma about the Riemann curvature of such a metric.

\begin{lemma}\label{curv-calc}
Let $M$ be an $n$-dimensional manifold, $g_r$ with $r\in [0, 1]$  a smooth path of metrics on $M$, and $f:\R\to[0,1]$ a smooth function. Define the metric $\bar{g} = g_{f(t)} + dt^2$ on $M\times\R$. If $(x_0,t_0)\in M\times R$ has local coordinates $(x_1,\ldots,x_n,t)$ where $(x_1,\ldots,x_n)$ are normal coordinates for $x_0$ with respect to the metric $g_{f(t_0)}$ on $M$, then the Riemannian curvatures $\bar R$ of $\bar g$ at $(x_0,t_0)$ satisfy
$$\bar R_{ijk}^\ell=R_{ijk}^\ell+O(|f'|^2),
\bar R_{ijk}^t=O(|f'|), \textrm{ and } 
\bar R_{itk}^t=O(|f'|^2)+O(|f''|).$$
In particular, the sectional curvatures $\bar K$ of $\bar g$ are then given by
 $$\bar K_{ij}=K_{ij}+O(|f'|^2) \textrm{ and} \,\,\bar K_{it} =  O(|f''|)+O(|f'|^2).$$

\end{lemma}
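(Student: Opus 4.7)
The plan is to compute the Christoffel symbols of $\bar{g}$ at $(x_0, t_0)$ in the given coordinates, track their orders in $|f'|$, and then substitute into the standard formula for the Riemann tensor. The sectional curvature assertions will then follow directly, since the chosen coordinates are orthonormal at $(x_0, t_0)$.

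First I would record the block-diagonal form of the metric: $\bar{g}_{ij} = (g_{f(t)})_{ij}$, $\bar{g}_{it} = 0$, and $\bar{g}_{tt} = 1$. The essential observations are that at $(x_0, t_0)$ all spatial first derivatives $\partial_k \bar{g}_{ij}$ vanish by the normal coordinate hypothesis, while $\partial_t \bar{g}_{ij} = f'(t)\,\partial_r(g_r)_{ij}|_{r = f(t)}$ carries a factor of $f'$. Applying the Koszul formula at $(x_0, t_0)$ then yields $\bar{\Gamma}^\ell_{ij} = 0$ for all spatial $i, j, \ell$, while both $\bar{\Gamma}^t_{ij}$ and $\bar{\Gamma}^\ell_{it}$ are $O(|f'|)$; all Christoffels with two or more $t$-slots vanish because $\bar{g}_{tt} = 1$ and $\bar{g}_{it} = 0$. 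Moreover, since $\bar{g}^{\ell t} = 0$, the spatial symbol $\bar{\Gamma}^\ell_{ij}(x, t)$ agrees with the Christoffel of the slice metric $g_{f(t)}$ at $x$.

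Next I would substitute these into
$$\bar{R}^\ell_{ijk} = \partial_i \bar{\Gamma}^\ell_{jk} - \partial_j \bar{\Gamma}^\ell_{ik} + \bar{\Gamma}^\ell_{im}\bar{\Gamma}^m_{jk} - \bar{\Gamma}^\ell_{jm}\bar{\Gamma}^m_{ik},$$
with $m$ summed over both spatial and temporal indices. For $\bar{R}^\ell_{ijk}$ with all indices spatial, the linear terms reproduce the slice Riemann tensor $R^\ell_{ijk}$ of $g_{f(t_0)}$ at $x_0$, while the quadratic terms contribute only via $m = t$, producing a product of two $O(|f'|)$ factors and hence $O(|f'|^2)$. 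For $\bar{R}^t_{ijk}$ the linear terms are directly $O(|f'|)$ and the quadratic terms vanish, since either the spatial $\bar{\Gamma}^\ell_{jk}$ is zero at $(x_0, t_0)$ or a pure-$t$ symbol is zero. For $\bar{R}^t_{itk}$ the dominant contribution is $-\partial_t \bar{\Gamma}^t_{ik}$, which expands by the product rule into $\tfrac{1}{2}\bigl(f''\,\partial_r(g_r)_{ik} + (f')^2\,\partial_r^2(g_r)_{ik}\bigr)$, giving $O(|f''|) + O(|f'|^2)$; the quadratic part again contributes only $O(|f'|^2)$.

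Finally, the sectional curvature claims are immediate. In the chosen coordinates $\bar{g}_{ij}(x_0, t_0) = \delta_{ij}$, $\bar{g}_{tt}(x_0, t_0) = 1$, and $\bar{g}_{it}(x_0, t_0) = 0$, so each sectional curvature of a coordinate $2$-plane coincides, up to lowering one index by $\delta$, with a single component of the Riemann tensor: $\bar{K}_{ij} = K_{ij} + O(|f'|^2)$ follows from the first estimate and $\bar{K}_{it} = O(|f''|) + O(|f'|^2)$ from the third. I do not anticipate any conceptual obstacle; the proof will be essentially careful bookkeeping, and the main point requiring vigilance will be keeping track of which Christoffel symbols vanish versus which are $O(|f'|)$, together with the identification of the spatial Christoffels of $\bar{g}$ with those of the slice metric.
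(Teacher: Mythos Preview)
Your proposal is correct and follows essentially the same route as the paper: compute the Christoffel symbols of $\bar g$ in the chosen coordinates, observe that the purely spatial ones agree with those of the slice metric (hence vanish at $(x_0,t_0)$), that the mixed ones carry a factor of $f'$, and that those with two $t$-slots vanish, then substitute into the standard formula for $\bar R$ and read off the orders. The paper writes out the explicit coefficients of each Christoffel symbol and curvature component, whereas you track only their orders in $|f'|$ and $|f''|$, but the argument is the same.
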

\begin{proof}

Denote the coordinate vector fields by $\partial_1,\ldots,\partial_n,\partial_t$. Let $\nabla$ and $\bar\nabla$ be the Levi-Civita connections of $g_{f(t_0)}$ and $\bar g$ respectively and let $\bar\Gamma_{ij}^k$ and $\Gamma_{ij}^k$ be the Christoffel symbols of $\bar\nabla$  and $\nabla$, respectively, at $(x_0,t_0)$.  We denote the Riemannian and sectional curvatures, respectively, by $\bar R_{ijk}^\ell$ and $\bar K_{ij}$ for $\bar g$ and $R_{ijk}^\ell$ and $K_{ij}$ for $g_{f(t_0)}$
Since $\bar K_{ij}=\sum\limits_\ell\bar g_{i\ell}\bar R_{ijj}^\ell$, and 
$\bar K_{it} =   -\bar g_{tt}\bar R_{iti}^t - \sum_{\ell\neq t} \bar g_{\ell t}\bar R_{iti}^\ell-\bar R_{iti}^t$, the results for the sectional curvatures follow immediately.

Since $\bar g_{ij}=(g_{f(t_0)})_{ij}$, we have $\bar\Gamma_{ij}^k=\Gamma_{ij}^k$
for $i,j,k=1,\ldots,n$.
Moreover, we have the constant values $\bar g_{it}=0$ for all $i=1,\ldots,n$ and $\bar g_{tt}=\bar g^{tt}=1$. Therefore
 
 $$\bar\Gamma_{it}^t=\bar\Gamma_{tt}^k=\bar\Gamma_{tt}^t=0,$$
$$\bar\Gamma_{ij}^t=-\frac12\partial_t\bar g_{ij}, \,\,\textrm{ and } \bar\Gamma_{it}^k=\frac12\sum_\ell\bar g^{k\ell}\partial_t\bar g_{i\ell}.$$

Setting $r=f(t)$,  we obtain
$$\partial_t\bar g_{ij}=\frac{\partial(g_r)_{ij}}{\partial r}f'(t),$$

and using normal coordinates at the point $(x_0, t_0)$ this gives us that 
$$\bar\Gamma_{ij}^t=-\frac12\left[\frac{\partial(g_r)_{ij}}{\partial r}\right]_{(x_0,f(t_0))}f'(t_0)=O(|f'|),$$ 
$$\bar\Gamma_{it}^k=\frac12\left[\frac{\partial(g_r)_{ik}}{\partial r}\right]_{(x_0,f(t_0))}f'(t_0)=O(|f'|).$$

We now calculate  the Riemannian curvatures. 
For $i,j,k,\ell=1,\ldots,n$, since $\bar\Gamma_{ij}^k=\Gamma_{ij}^k$ and $m$ varies over $1,\ldots,n$ and $t$, we have
\begin{align*}
\bar R_{ijk}^\ell&=\partial_i\Gamma_{jk}^\ell-\partial_j\Gamma_{ik}^\ell+\sum_{m\neq t}(\Gamma_{im}^\ell\Gamma_{jk}^m-\Gamma_{jm}^\ell\Gamma_{ik}^m)+\bar\Gamma_{it}^\ell\bar\Gamma_{jk}^t-\bar\Gamma_{jt}^\ell\bar\Gamma_{ik}^t\\
&=R_{ijk}^\ell -\frac14\left[\frac{\partial(g_r)_{i\ell}}{\partial r}\frac{\partial(g_r)_{jk}}{\partial r}-\frac{\partial(g_r)_{j\ell}}{\partial r}\frac{\partial(g_r)_{ik}}{\partial r}\right]_{(x_0,f(t_0))}(f'(t_0))^2\\
&=R_{ijk}^\ell+O(|f'|^2).
\end{align*}

For the Riemannian curvatures involving $t$, we only need to compute $\bar R_{ijk}^t$ and $\bar R_{itk}^t$ for $i,j,k=1,\ldots,n$. First,
\begin{align*}
\bar R_{ijk}^t&=-\frac12\left[\frac{\partial^2(g_r)_{jk}}{\partial x_i\partial r}-\frac{\partial^2(g_r)_{ik}}{\partial x_j\partial r}\right]_{(x_0,f(t_0))}f'(t_0)\\
&=O(|f'|).
\end{align*}
To calculate $\bar R_{itk}^t$, we note that a straightforward calculation gives us that 
at the point $(x_0,t_0)$, we have
$$\partial_t\bar\Gamma_{ik}^t=-\frac12\left[\frac{\partial^2(g_r)_{ik}}{\partial r^2}\right]_{(x_0,f(t_0))}(f'(t_0))^2-\frac12\left[\frac{\partial(g_r)_{ik}}{\partial r}\right]_{(x_0,f(t_0))}f''(t_0).$$
Therefore, we have
\begin{align*}
\bar R_{itk}^t&=\frac12\left[\frac{\partial(g_r)_{ik}}{\partial r}\right]_{(x_0,f(t_0))}f''(t_0)\\
&\quad+\left[\frac12\frac{\partial^2(g_r)_{ik}}{\partial r^2}-\frac14\sum_{m\neq t}\frac{\partial(g_r)_{im}}{\partial r}\frac{\partial(g_r)_{km}}{\partial r}\right]_{(x_0,f(t_0))}(f'(t_0))^2\\
&=O(|f''|)+O(|f'|^2).
\end{align*}
\end{proof}

We are now ready to estimate the $(p,n+1)$-intermediate scalar curvature on $(M\times\R, \bar g)$.

\begin{lemma}\label{isotopyconcordance}
Let $M$ be a compact $n$-dimensional manifold and $g_r$ with $r\in [0, 1]$  a smooth path of $(s_{p,n}>0)$-metrics on $M$. Then there exists a positive constant $C\leq 1$ 
so that for every smooth function $f: \R\rightarrow [0, 1]$ with $|f'|, |f''|\leq C$ the metric
$\bar{g} = g_{f(t)} + dt^2$ on $M\times\R$ has positive $(p, n+1)$-intermediate scalar curvature, $0\leq p\leq n-2$.
\end{lemma}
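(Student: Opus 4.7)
The plan is to first establish a uniform positive lower bound on the $(p,n+1)$-intermediate scalar curvature of the ``frozen'' product metric $g_{f(t_0)}+dt^2$ at each point $(x_0,t_0)\in M\times\R$, and then to invoke Lemma \ref{curv-calc} to control how much $\bar g$ deviates from this product in terms of $|f'|$ and $|f''|$, absorbing the error by choosing $C$ sufficiently small.

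For the product analysis, fix $(x_0,t_0)$ and an arbitrary $p$-plane $\bar P\subset T_{(x_0,t_0)}(M\times\R)$. The splitting $T(M\times\R)=TM\oplus \R\partial_t$ together with flatness of the $\R$-factor forces $R^{\mathrm{prod}}(X,Y)Z=R_M(\pi_M X,\pi_M Y)\pi_M Z$, so every sectional curvature involving $\partial_t$ vanishes. If $p=0$ or $\partial_t\in\bar P^{\perp}$, then $\bar P\subset T_{x_0}M$ and directly $s_{p,n+1}^{\mathrm{prod}}(\bar P)=s_{p,n}^{g_{f(t_0)}}(\bar P)$. Otherwise $p\ge 1$ and one can choose an orthonormal basis so that $\bar P=\mathrm{span}(e_1,\ldots,e_{p-1},\cos\theta\,u+\sin\theta\,\partial_t)$ with $e_1,\ldots,e_{p-1},u$ orthonormal in $T_{x_0}M$ and $\sin\theta\neq 0$. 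Choosing the adapted orthonormal basis $\{v_1,\ldots,v_{n-p},-\sin\theta\,u+\cos\theta\,\partial_t\}$ of $\bar P^{\perp}$, where $v_1,\ldots,v_{n-p}$ span the $g$-orthogonal complement in $T_{x_0}M$ of $\mathrm{span}(e_1,\ldots,e_{p-1},u)$, and using $\bar K^{\mathrm{prod}}(v_i,-\sin\theta\,u+\cos\theta\,\partial_t)=\sin^2\theta\cdot K_M(v_i,u)$ together with the identity
\begin{equation*}
\sum_i K_M(u,v_i)=s_{p-1,n}^{g_{f(t_0)}}(P_M')-s_{p,n}^{g_{f(t_0)}}(P_M),
\end{equation*}
one obtains the convex combination
\begin{equation*}
s_{p,n+1}^{\mathrm{prod}}(\bar P)=\cos^2\theta\cdot s_{p,n}^{g_{f(t_0)}}(P_M)+\sin^2\theta\cdot s_{p-1,n}^{g_{f(t_0)}}(P_M'),
\end{equation*}
where $P_M=\mathrm{span}(e_1,\ldots,e_{p-1},u)$ and $P_M'=\mathrm{span}(e_1,\ldots,e_{p-1})$. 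By hypothesis $g_{f(t_0)}$ has positive $(p,n)$-intermediate scalar curvature and hence, by Proposition \ref{hierarchyofcurvatures}, positive $(p-1,n)$-intermediate scalar curvature; compactness of $M$, $[0,1]$, and the relevant Grassmann bundles, together with continuity of $s_{q,n}$ in both the metric and the plane, yields a uniform lower bound $\delta_0>0$ on $s_{p,n+1}^{\mathrm{prod}}(\bar P)$ over all points and all $p$-planes.

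Next, Lemma \ref{curv-calc} shows that in normal coordinates for $g_{f(t_0)}$ at $x_0$, each component of the Riemann tensor of $\bar g$ at $(x_0,t_0)$ differs from the corresponding component of $R^{\mathrm{prod}}$ by $O(|f'|)+O(|f'|^2)+O(|f''|)$, with constants depending only on the $C^2$-norms of $\{g_r\}_{r\in[0,1]}$ on $M$, which are uniformly bounded by compactness. Since $s_{p,n+1}(\bar P)$ is a continuous linear function of the Riemann tensor whose coefficients vary continuously over the compact Grassmann $p$-bundle of $M$, we obtain a uniform estimate
\begin{equation*}
\bigl|s_{p,n+1}^{\bar g}(\bar P)-s_{p,n+1}^{\mathrm{prod}}(\bar P)\bigr|\le C_1\bigl(|f'|+|f''|\bigr)
\end{equation*}
with $C_1$ independent of $f$, $(x_0,t_0)$, and $\bar P$. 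Setting $C:=\min\{1,\delta_0/(4C_1)\}$ then ensures $s_{p,n+1}^{\bar g}(\bar P)\ge\delta_0/2>0$ whenever $|f'|,|f''|\le C$, which is precisely the desired conclusion.

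The main obstacle is the case-analysis computation of $s_{p,n+1}^{\mathrm{prod}}(\bar P)$ for a $p$-plane with a nontrivial $\partial_t$-component: one must carefully adapt both the basis of $\bar P$ and of $\bar P^{\perp}$ to the product splitting to expose the convex combination of $s_{p,n}^{g_{f(t_0)}}$ and $s_{p-1,n}^{g_{f(t_0)}}$. It is exactly the emergence of the $(p-1,n)$-intermediate scalar curvature here that forces us to appeal to the hierarchy of Proposition \ref{hierarchyofcurvatures}; without it, positivity of the slice curvature would not propagate to positivity of the product curvature.
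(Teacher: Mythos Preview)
Your proof is correct and relies on the same ingredients as the paper's: Lemma \ref{curv-calc} for the curvature perturbation, compactness for uniform bounds, and the reduction of $s_{p,n+1}$ on the cylinder to a combination of $s_{p,n}$ and $s_{p-1,n}$ on the slice (invoking Proposition \ref{hierarchyofcurvatures} for the latter). The organization differs: you first analyze the exact frozen product $g_{f(t_0)}+dt^2$, obtaining the clean convex combination $\cos^{2}\theta\, s_{p,n}(P_M)+\sin^{2}\theta\, s_{p-1,n}(P_M')$, and then bound the deviation of $\bar g$ from this product uniformly via Lemma \ref{curv-calc}. The paper instead works directly with $\bar g$, carrying the $O(|f'|)$, $O(|f'|^{2})$, $O(|f''|)$ error terms through a three-case analysis on the position of $P$ and $P^{\perp}$ relative to the horizontal hyperplane $T_{z_0}(M\times\{t_0\})$; its Case 3 arrives at the same convex combination, written there as $A+\alpha^{2}B$ with $A=s_{p,n}(Q^{\perp})$ and $A+B=s_{p-1,n}(P\cap T)$. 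Your separation of the product computation from the perturbation estimate is arguably tidier and makes the role of the hierarchy more transparent; the paper's case-by-case computation makes the precise order of the error terms in each geometric configuration more explicit. One minor slip: your displayed identity should carry a factor of $2$, namely $2\sum_i K_M(u,v_i)=s_{p-1,n}(P_M')-s_{p,n}(P_M)$, though your final convex-combination formula is correct as stated.
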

\begin{proof}
Choose a point $z_0=(x_0,t_0)\in M\times\R$. We use $\bar s$, $\bar R$, $\bar K$ to denote intermediate scalar, Riemann, and sectional curvatures for the metric $\bar g$ and $s$, $R$, $K$ for those curvatures in the metric $g_{f(t_0)}$. Fix a value $0\leq p\leq n-2$.

Since $M$ is compact, the Grassmannian $\Grass_p(M)$ is also compact. By the continuity of the curvature, this means for each $r$ there is a positive lower bound for the $(p,n)$-intermediate scalar curvature of the metric $g_r$. As the $r$ vary over the compact interval $[0,1]$, we can choose a positive lower bound $B_p$ for the $(p,n)$-intermediate scalar curvatures of all the metrics and thus  a positive lower bound $B_{p-1}$ for the $(p-1,n)$-intermediate scalar curvatures of the metrics $g_r$ as well.

Let $P$ be a $p$-plane in $T_{z_0}(M\times\R)$ so that $P^\perp$ is $(n-p+1)$-dimensional. We denote the hyperplane $T_{z_0}(M\times\{t_0\})$ by $T$. Consider the intersection $Q=P^\perp\cap T$. If $P^\perp$ is fully contained in $T$ then $Q=P^\perp$. If $P^\perp$ is not inside $T$, then the span of both must be the entire $(n+1)$-dimensional $T_{z_0}(M\times\R)$, and so by Grassmann's identity we have
\begin{align*}
\dim(Q)&=\dim(P^\perp\cap T)=\dim(P^\perp)+\dim(T)-\dim(P^\perp+T)\\
&=(n-p+1)+n-(n+1)=n-p.
\end{align*}
This leads to three cases: Case 1, where $P^\perp$ is contained in $T$, Case 2, where $P$ is contained in $T$, and Case 3, where either $P$ nor $P^\perp$ is contained in $T$.

\noindent{\bf Case 1:} $P^\perp$ is contained in $T$.

In this case, we can take an orthonormal basis $\{e_1,\ldots,e_{n-p+1}\}$ for $P^\perp$ and extend it to orthonormal bases $\{e_1,\ldots,e_n\}$ for $T$ and $\{e_1,\ldots,e_n,e_t\}$ for $T_{z_0}(M\times\R)$. Using the exponential map for $g_{f(t_0)}$, we get local coordinates $(x_1,\ldots,x_n,x_t)$ at $z_0$ where $(x_1,\ldots,x_n)$ are normal coordinates at $x_0$. Then we compute using Lemma \ref{curv-calc},
\begin{align*}
\bar s_{p,n+1}(P)&=\sum_{i,j=1}^{n-p+1}\bar K_{ij}\\
&=\sum_{i,j=1}^{n-p+1}\bigl[K_{ij}+O(|f'|^2)\bigr]\\
&=s_{p-1,n}(P\cap T)+O(|f'|^2).
\end{align*}
As $s_{p-1,n}(P\cap T)\geq B_{p-1}>0$, then for a small enough value of $C$, we can  force the contributions of $f$ to not be too negative and allow $\bar s_{p,n+1}(P)$ to remain positive.

\noindent{\bf Case 2:} $P$ is contained in $T$.

If $P\subseteq T$ then $T^\perp\subseteq P^\perp$, and since $T^\perp$ is $1$-dimensional, let $e_t$ be a unit vector spanning $T^\perp$. Then taking an orthonormal basis $\{e_1,\ldots,e_{n-p}\}$ for $Q=P^\perp\cap T$, we have an orthonormal basis $\{e_1,\ldots,e_{n-p},e_t\}$ for $P^\perp$. We can extend this to orthonormal bases $\{e_1,\ldots,e_n\}$ for $T$ and $\{e_1,\ldots,e_n,e_t\}$ for $T_{z_0}(M\times\R)$ and again we get local coordinates $(x_1,\ldots,x_n,x_t)$ at $z_0$ where $(x_1,\ldots,x_n)$ are normal coordinates at $x_0$. Then we compute using Lemma \ref{curv-calc}
\begin{align*}
\bar s_{p,n+1}(P)&=\sum_{i,j=1}^{n-p}\bar K_{ij}+2\sum_{i=1}^{n-p}\bar K_{it}\\
&=\sum_{i,j=1}^{n-p}\bigl[K_{ij}+O(|f'|^2)\bigr]+2\sum_{i=1}^{n-p}\bigl[O(|f'|^2)+O(|f''|)\bigr]\\
&=s_{p,n}(P)+O(|f'|^2)+O(|f''|).
\end{align*}
As $s_{p,n}(P)\geq B_p>0$, then for a small enough value of $C$, we can again force the contributions of $f$ to allow $\bar s_{p,n+1}(P)$ to remain positive.

\noindent{\bf Case 3:} Neither $P$ nor $P^\perp$ is contained in $T$.

In this case $Q=P^\perp\cap T$ is $(n-p)$-dimensional. We depict this situation in Figure \ref{Pplane}. Take $e_1,\ldots,e_{n-p}$ to be an orthonormal basis for $Q$, and choose $v\in P^\perp\setminus Q$ so that $e_1,\ldots,e_{n-p},v$ is an orthonormal basis for $P^\perp$. Then obviously $v$ is not in $T$, but it also is not perpendicular to $T$, or we would have $P\subseteq T$. Therefore $v$ has a nonzero projection into $T$ which we rescale to a unit vector $e_{n-p+1}$. Putting this together with the previously defined $e_i$, we extend to orthonormal bases $\{e_1,\ldots,e_n\}$ for $T$ and $\{e_1,\ldots,e_n,e_t\}$ for $T_{z_0}(M\times\R)$ and again we get local coordinates $(x_1,\ldots,x_n,x_t)$ at $z_0$ where $(x_1,\ldots,x_n)$ are normal coordinates at $x_0$. Note that by construction, we have $v=\alpha e_{n-p+1}+\beta e_t$, as the projection of $v$ onto $T$, is a multiple of $e_{n-p+1}$.

\begin{figure}[!htbp]
\vspace{5cm}
\hspace{3.5cm}
\begin{picture}(0,0)
\includegraphics{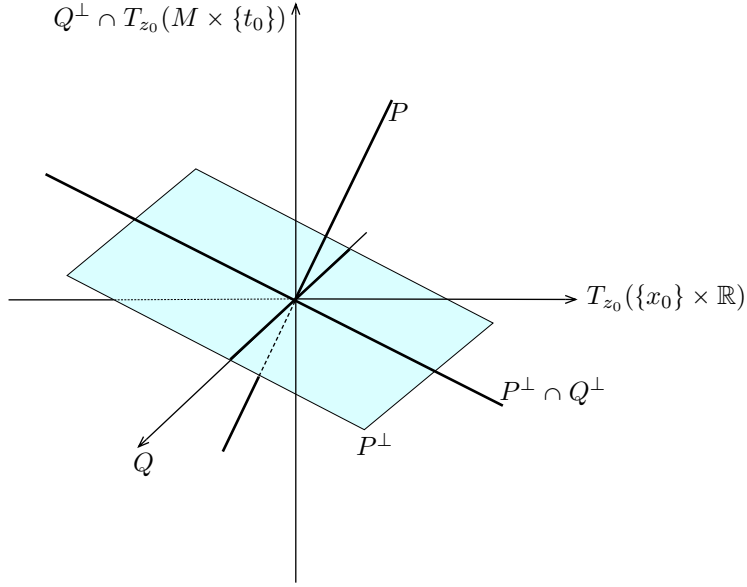}%
\end{picture}
\setlength{\unitlength}{3947sp}
\begin{picture}(5079,1559)(1902,-7227)
\put(2700,-6450){\makebox(0,0)[lb]{\smash{{\SetFigFont{10}{8}{\rmdefault}{\mddefault}{\updefault}{\color[rgb]{0,0,0}$Q$}%
}}}}
\put(4300,-4250){\makebox(0,0)[lb]{\smash{{\SetFigFont{10}{8}{\rmdefault}{\mddefault}{\updefault}{\color[rgb]{0,0,0}$P$}%
}}}}
\put(5000,-6000){\makebox(0,0)[lb]{\smash{{\SetFigFont{10}{8}{\rmdefault}{\mddefault}{\updefault}{\color[rgb]{0,0,0}$P^{\perp}\cap Q^{\perp}$}%
}}}}
\put(4100,-6350){\makebox(0,0)[lb]{\smash{{\SetFigFont{10}{8}{\rmdefault}{\mddefault}{\updefault}{\color[rgb]{0,0,0}$P^{\perp}$}%
}}}}
\put(5550,-5400){\makebox(0,0)[lb]{\smash{{\SetFigFont{10}{8}{\rmdefault}{\mddefault}{\updefault}{\color[rgb]{0,0,0}$ T_{z_0}(\{x_0\}\times\mathbb{R})$}%
}}}}
\put(2200,-3650){\makebox(0,0)[lb]{\smash{{\SetFigFont{10}{8}{\rmdefault}{\mddefault}{\updefault}{\color[rgb]{0,0,0}$Q^{\perp}\cap T_{z_0}(M\times\{t_0\})$}%
}}}}

\end{picture}%
\caption{The arrangement of the subspace $P\subset T_{z_0}(M\times\mathbb{R})$ in Case 3}
\label{Pplane}
\end{figure}   

We compute the curvatures for $1\leq i\leq n-p$ using Lemma \ref{curv-calc}
\begin{align*}
\bar K(e_i,v)&=\bar g(\bar R(v,e_i)e_i,v)\\
&=\bar g(\bar R(\alpha e_{n-p+1}+\beta e_t,e_i)e_i,\alpha e_{n-p+1}+\beta e_t)\\
&=\alpha^2\bar R_{(n-p+1)ii}^{n-p+1}+2\alpha\beta\bar R_{(n-p+1)ii}^t+\beta^2\bar R_{tii}^t\\
&=\alpha^2 \bigl[R_{(n-p+1)ii}^{n-p+1}+O(|f'|^2)\bigr]+2\alpha\beta O(|f'|)+\beta^2\bigl[O(|f'|^2)+O(|f''|)\bigr]\\
&=\alpha^2 K_{i(n-p+1)}+O(|f'|)+O(|f'|^2)+O(|f''|).
\end{align*}
Therefore we have
\begin{align*}
\bar s_{p,n+1}(P)&=\sum_{i,j=1}^{n-p}\bar K_{ij}+2\sum_{i=1}^{n-p}\bar K(e_i,v)\\
&=\sum_{i,j=1}^{n-p}\bigl[K_{ij}+O(|f'|^2)\bigr]+2\sum_{i=1}^{n-p}\bigl[\alpha^2 K_{i(n-p+1)}+O(|f'|)+O(|f'|^2)+O(|f''|)\bigr]\\
&=\sum_{i,j=1}^{n-p}K_{ij}+2\alpha^2\sum_{i=1}^{n-p}K_{i(n-p+1)}+O(|f'|)+O(|f'|^2)+O(|f''|).
\end{align*}

Set $A=\sum\limits_{i,j=1}^{n-p}K_{ij}$ and $B=2\sum\limits_{i=1}^{n-p}K_{i(n-p+1)}$. Then $A=s_{p,n}(Q^\perp)\geq B_p>0$ and $$A+B=\sum_{i,j=1}^{n-p+1}K_{ij}=s_{p-1,n}(P\cap T)\geq B_{p-1}>0.$$

Since $\alpha^2>0$,  for $B\geq 0$, we have $A+\alpha^2 B\geq A\geq B_p$. For $B<0$ since $\alpha^2<1$, we have $A+\alpha^2 B>A+B\geq B_{p-1}$. In either case, $A+\alpha^2 B\geq \min\{B_p, B_{p-1}\}$, and hence for a small enough choice of $C$, we can force the contributions of $f$ to allow $\bar s_{p,n+1}(P)$ to remain positive as well.

\end{proof}

Rephrasing this result in the language of isotopy and concordance, we have the following:

\begin{proposition}\label{isotoconc} Let $M$ be a smooth compact manifold of dimension $n$. Then, for any $p\in\{0,1,\cdots, n-2\}$,  metrics which are $(s_{p,n}>0)$-isotopic on $M$ are also $(s_{p,n}>0)$-concordant.
\end{proposition}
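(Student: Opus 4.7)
The plan is to build the concordance directly as a warped-type metric of the form $\bar g = g_{f(t)} + dt^2$ on the cylinder $M \times [0, L+2]$, where $f : \R \to [0,1]$ is a suitably chosen cutoff function associated to the given isotopy $g_r$, $r \in [0,1]$. The crucial input will be Lemma \ref{isotopyconcordance}, which provides a universal constant $C \leq 1$ (depending on the isotopy $g_r$) such that whenever $|f'|, |f''| \leq C$, the metric $g_{f(t)} + dt^2$ has positive $(p, n+1)$-intermediate scalar curvature.

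First, I would choose $f : \R \to [0,1]$ smooth, satisfying $f(t) = 0$ for $t \leq 1$ and $f(t) = 1$ for $t \geq L+1$, and monotone in between. A standard construction (e.g., convolve a piecewise-linear ramp of slope $1/L$ with a fixed mollifier) produces such an $f$ with $|f'| = O(1/L)$ and $|f''| = O(1/L^2)$ as $L \to \infty$. In particular, for $L$ chosen sufficiently large, both $|f'| \leq C$ and $|f''| \leq C$ hold everywhere. This is the only choice in the construction that requires care, and the fact that both derivative bounds can be made arbitrarily small simultaneously, by stretching the interpolation region, is what makes the argument go through.

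Next, define $\bar g := g_{f(t)} + dt^2$ on $M \times [0, L+2]$. On the slab $M \times [0,1]$ we have $f \equiv 0$, so $\bar g = g_0 + dt^2$; on the slab $M \times [L+1, L+2]$ we have $f \equiv 1$, so $\bar g = g_1 + dt^2$. Hence $\bar g$ has the product form required by the definition of $(s_{p,n}>0)$-concordance near both boundary components. By Lemma \ref{isotopyconcordance}, the bounds $|f'|, |f''| \leq C$ ensure that $\bar g$ has positive $(p, n+1)$-intermediate scalar curvature on all of $M \times \R$, and in particular on $M \times [0, L+2]$.

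The main obstacle, such as it is, is really just bookkeeping: verifying that the cutoff $f$ can simultaneously satisfy the boundary/product conditions and the derivative bounds. Since these derivative bounds scale inversely with the length $L$ of the interpolation region while $C$ is fixed once the isotopy is fixed, taking $L$ large enough resolves the tension. No curvature computation beyond what is already packaged in Lemma \ref{isotopyconcordance} is needed, and compactness of $M$ (used implicitly to get the uniform constant $C$ from the lemma) is the only global hypothesis invoked.
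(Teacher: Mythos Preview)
Your proposal is correct and follows essentially the same approach as the paper: invoke Lemma \ref{isotopyconcordance} to obtain the constant $C$, then build a slowly varying cutoff $f$ on a long cylinder so that $|f'|,|f''|\leq C$, yielding $\bar g = g_{f(t)}+dt^2$ as the desired concordance. The paper uses an explicit rescaled bump function $\mu_L(t)=\mu((t-1)/L)$ rather than a mollified ramp, but the idea is identical; note only that your mollification with a \emph{fixed} mollifier actually gives $|f''|=O(1/L)$ rather than $O(1/L^2)$, though this is still amply sufficient.
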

\begin{proof}
Let $g_0$ and $g_1$ be two $(s_{p,n}>0)$-isotopic metrics on $M$ with the isotopy $g_r$ for $r\in[0,1]$. By Lemma \ref{isotopyconcordance}, there is a $C\leq 1$ such that for every smooth function $f:\R\to[0,1]$ with $|f'|,|f''|\leq C$, the metric $\bar{g}=g_{f(t)}+dt^2$ on $M\times R$ has positive $(p,n+1)$-intermediate scalar curvature.

\begingroup\singlespacing
Let $\mu:(-\infty,\infty)\to[0,1]$ be the function
$$\mu(t)=\begin{cases}0&t\leq 0\cr {e^{-1/t}\over e^{-1/t}+e^{-1/(1-t)}}&t\in(0,1)\cr 1&t\geq 1\end{cases}$$
\endgroup
that smoothly transitions from $0$ to $1$ over the interval $[0,1]$. For any $L>0$, a translation and rescaling gives us the function $\mu_L(t)=\mu\left({t-1\over L}\right)$ that smoothly transitions from $0$ to $1$ over the interval $[1,L+1]$. The derivatives $\mu'$ and $\mu''$ are bounded and we have $\mu_L'(t)={1\over L}\mu'\left({t-1\over L}\right)$ and $\mu_L''(t)={1\over L^2}\mu''\left({t-1\over L}\right)$. Therefore we can choose $L$ sufficiently large to force $|\mu_L'|,|\mu_L''|\leq C$.

Taking $f$ to be the restriction of $\mu_L$ to the interval $[0,L+2]$, the manifold $M\times[0,L+2]$ with metric $\bar{g}=g_{f(t)}\times dt^2$ has positive $(p,n+1)$-intermediate scalar curvature. Since $\bar{g}=g_0\times dt^2$ for $0\leq t\leq 1$ and $\bar{g}=g_1\times dt^2$ for $L+1\leq t\leq L+2$, then by definition, this is a $(s_{p,.n}>0)$-concordance between $g_0$ and $g_1$.
\end{proof}

\section{Curvature of Warped Product Metrics}\label{s4}
\setcounter{figure}{0}
We first fix notation.
Let  $(B^b, g_B)$ and $(F^n, g_F)$ be, respectively, $b$ and $n$-dimensional Riemannian manifolds and  consider their product, $M=B\times F$ with the warped product metric $g=g_{B}+\beta^{2}g_{F}$, where $\beta:B\rightarrow (0,\infty)$ is a smooth function. We assume that $b,n\geq 1$. We denote by $\pi_{B}$ and $\pi_{F}$, the corresponding projections from $M$ to $B$ and $F$,
and for a point $x\in M$, set
$$\check{x}:=\pi_{B}(x)\quad \mathrm{ and } \quad \hat{x}=\pi_{F}(x).$$
At each point $x\in M$, the maps, $\pi_{B}$ and $\pi_{F}$, induce derivative maps
$$(\pi_{B})_*:T_xM\rightarrow T_{\check{x}}B\quad \mathrm{ and } \quad (\pi_{F})_*:T_xM\rightarrow T_{\hat{x}}F.$$
The warped product structure of the metric gives us the following horizontal and vertical spaces at $x\in M$
$$\msH_{x}:=T_{x}(B\times\{\pi_{F}(x)\})\quad \mathrm{ and } \quad \msV_{x}:=T_{x}(\{\pi_{B}(x)\}\times F).$$
In particular, the restriction of the derivative map $(\pi_{B})_{*}$ to the horizontal space $\msH_{x}$ is the isometry
$$(\pi_{B})_{*}|_{\msH_x}: (\msH_{x}, g_{x}|_{\msH_{x}})\rightarrow (T_{\check{x}}B, (g_{B})_{\check{x}}).$$

We denote the {\em vertical} and {\em horizontal distributions} of the submersion  by $\msV$ or $\msH$. The notation, $\msV$, $\msH$, serves a dual purpose as we also use it to mean the projection onto the vertical or horizontal subspace. Let $u\in T_{x}M$ be some tangent vector. Then, $$u_{F}:=\msV(u)\in \msV_{x}\quad \mathrm{and} \quad u_{B}:=\msH(u)\in\msH_{x},$$ denote the corresponding orthogonal projections. 
Note that the vectors $u_B$ and $u_V$ in $\msV_{x}$ and $\msH_{x}$ are distinct from their corresponding images under $(\pi_{F})_{*}$ and $(\pi_{B})_{*}$.  In the case of the derivative maps, we write:
$$\hat{u}:=(\pi_{F})_{*}(u)\quad \mathrm{ and } \quad \check{u}=(\pi_{B})_{*}(u).$$ 

\subsection{The Riemann Curvature Tensor.} In computing the curvature tensor, we will make use of well-known formulas of Gray \cite{gray} and O'Neill \cite{oneill}, (see also  Theorem 9.28 in \cite{besse}). These formulas involve the tensors, ${\bf A,T}:\Gamma TM\otimes \Gamma TM\rightarrow \Gamma TM$, defined for vectors fields $E_1, E_2$ on $M$ as follows:
\begin{equation*}
\begin{split}
{\bf A}_{E_{1}} E_{2}&=\msH(\nabla_{\msH(E_{1})} \msV(E_{2}))+\msV(\nabla_{\msH(E_{1})} \msH(E_{2})).\\
{\bf T}_{E_{1}} E_{2}&=\msH(\nabla_{\msV(E_{1})} \msV(E_{2}))+\msV(\nabla_{\msV(E_{1})} \msH(E_{2})).
\end{split}
\end{equation*}

Note that here the horizontal distribution for each $x\in M$ is naturally identified with $T_{\check{x}}B$ and in particular, this horizontal distribution is {\em integrable} (see Chapter 19 in \cite{lee}) so the tensor ${\bf A}$ above vanishes in our case (see 9.24 in \cite{besse}).

We recall the formulas from  \cite{besse} involving the ${\bf T}$ tensor.
\begin{theorem}[\cite{besse}]\label{ONT} 
Let manifold $M=B\times F$ be equipped with the warped product metric $g=g_{B}+\beta^{2}g_{F}$. Let $X,Y$ be a pair of horizontal vector fields  and $U,V$, a pair of vertical vector fields tangent to $M$. Then
\begin{align*}
{\bf T}_{X}U&={\bf T}_{X}{Y}=0, &{\bf T}_{U}V&={\bf T}_{V}U,\\
{\bf T}_{U}V&=\msH(\nabla_{U}V), & {\bf T}_{U}X&=\msV(\nabla_{U}X),
\end{align*}
and
$$g({\bf T}_{U}V, X)=-g({\bf T}_{U}X, V).$$
\end{theorem}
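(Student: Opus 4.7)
The plan is to derive each identity directly from the defining formula for $\bf T$, namely
$${\bf T}_{E_1}E_2=\msH(\nabla_{\msV(E_1)}\msV(E_2))+\msV(\nabla_{\msV(E_1)}\msH(E_2)),$$
exploiting the fact that for a horizontal vector field $X$ one has $\msV(X)=0$, and for a vertical vector field $U$ one has $\msH(U)=0$. The only non-trivial input beyond this is (i) the integrability of the vertical distribution (its leaves are the fibers $\{\check{x}\}\times F$), and (ii) metric compatibility of the Levi-Civita connection $\nabla$ with $g$.

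First I would handle the three formulas that are essentially bookkeeping. For ${\bf T}_X U$ and ${\bf T}_X Y$, the factor $\nabla_{\msV(X)}(\cdot)=\nabla_{0}(\cdot)$ kills both summands, so both vanish. For ${\bf T}_U V$, the term $\msV(\nabla_{U}\msH(V))=\msV(\nabla_U 0)=0$ drops out and only $\msH(\nabla_U V)$ survives, giving ${\bf T}_U V=\msH(\nabla_U V)$. Symmetrically, for ${\bf T}_U X$ only $\msV(\nabla_U X)$ survives.

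Next I would verify the symmetry ${\bf T}_U V={\bf T}_V U$. By what was just computed, this is the statement $\msH(\nabla_U V)=\msH(\nabla_V U)$, i.e.\ $\msH(\nabla_U V-\nabla_V U)=\msH([U,V])=0$. This is precisely the integrability of $\msV$: since $U,V$ are tangent to the fibers of the projection $\pi_B:M\to B$, their Lie bracket is again tangent to those fibers, hence vertical, hence has zero horizontal component. (This is the Frobenius-style observation that the vertical distribution of a submersion is always integrable.)

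Finally, for $g({\bf T}_U V,X)=-g({\bf T}_U X,V)$, I would drop the projections using orthogonality: since $X$ is horizontal and $V$ is vertical,
$$g({\bf T}_U V,X)=g(\msH(\nabla_U V),X)=g(\nabla_U V,X),\qquad g({\bf T}_U X,V)=g(\msV(\nabla_U X),V)=g(\nabla_U X,V).$$
Now apply metric compatibility to $g(V,X)\equiv 0$:
$$0=U\bigl(g(V,X)\bigr)=g(\nabla_U V,X)+g(V,\nabla_U X),$$
which rearranges to exactly the stated identity. The only mildly subtle step in the whole argument is the integrability point used for symmetry; everything else is a direct unfolding of definitions, so I do not anticipate a genuine obstacle.
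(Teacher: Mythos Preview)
Your proof is correct. Each step follows cleanly from the definition of ${\bf T}$, the integrability of the vertical distribution (for the symmetry ${\bf T}_U V={\bf T}_V U$), and metric compatibility (for the final skew-symmetry identity). The paper itself does not give a proof of this theorem; it is simply quoted from Besse's \emph{Einstein Manifolds} and used as a black box, so there is no in-paper argument to compare against. Your derivation is exactly the standard one found in that reference.
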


Using the fact that ${\bf A}$ is everywhere zero, we recall Theorem 9.28 in \cite{besse}, 
\begin{theorem}[\cite{besse}]\label{GONRm}
Let manifold $M=B\times F$ be equipped with the warped product metric $g=g_{B}+\beta^{2}g_{F}$. Let $\pi_B:(M,g)\to(B,g_B)$ denote a Riemannian warped product submersion for some warping function $\beta:B\to(0,\infty)$. Let $X,Y,Z, Z'$ be horizontal and $U,V,W, W'$ vertical vector fields tangent to $M$. Finally, let $R_{B}$ and $R_{F}$ denote the respective Riemann curvature tensors for $(B, g_{B})$ and $(F, g_{F})$. Then the Riemann curvature tensor, $R$, of $g$ satisfies the following properties.
\begin{enumerate}
\item[(i.)] $R(U,V,W,W') = g(R_{F}(\hat{U},\hat{V})\hat{W}, \hat{W'})+g({\bf T}_{U}W, {\bf T}_{V}W')-g({\bf T}_{V}W, {\bf T}_{U}W')$,
\item[(ii.)] $R(U,V,W,X) = g((\nabla_{U}{\bf T})_{V}W,X)-g((\nabla_{V}{\bf T})_{U}W,X)$,
\item[(iii.)] $R(X,U,Y,V) =g({\bf T}_{U}X,{\bf T}_{V}Y)- g((\nabla_{X}{\bf T})_{U}V,Y)$,
\item[(iv.)] $R(U,V,X,Y) = g({\bf T}_{U}X,  {\bf T}_{V}Y)-g({\bf T}_{V}X, {\bf T}_{U}Y)$,
\item[(v.)] $R(X,Y,Z,U) = 0$,
\item[(vi.)] $R(X,Y,Z,Z') = g(R_{B}(\check{X},\check{Y})\check{Z} ,\check{Z}')$.
\end{enumerate}
\end{theorem}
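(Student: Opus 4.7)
My plan is to derive the six identities directly from the definition $R(E_1,E_2)E_3 = \nabla_{E_1}\nabla_{E_2}E_3 - \nabla_{E_2}\nabla_{E_1}E_3 - \nabla_{[E_1,E_2]}E_3$, using an explicit decomposition of the Levi-Civita connection $\nabla$ of $g = g_B + \beta^2 g_F$ in terms of the Levi-Civita connections of $(B,g_B)$ and $(F,g_F)$, together with the O'Neill tensors ${\bf A}$ and ${\bf T}$. The crucial simplification is that for a warped product the horizontal distribution is integrable, so ${\bf A} \equiv 0$, leaving only ${\bf T}$ (which encodes the shape operator of the vertical fibers with respect to the warped metric).

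The first step is to establish the connection formulas for the four combinations of horizontal and vertical inputs. Using the Koszul formula, one shows that for horizontal $X, Y$ the covariant derivative $\nabla_X Y$ is the horizontal lift of $\nabla^B_{\check X} \check Y$; for vertical $U, V$ one has $\nabla_U V = \widetilde{\nabla}^F_U V + {\bf T}_U V$ where $\widetilde{\nabla}^F$ is the induced connection on fibers (which, because of the warping factor, differs from the naive lift of $\nabla^F$ only by a horizontal gradient-of-$\beta$ term absorbed into ${\bf T}$); and for mixed arguments one recovers $\nabla_U X = \msV(\nabla_U X) + \msH(\nabla_U X) = {\bf T}_U X + \msH(\nabla_U X)$, together with $\nabla_X U = \msH(\nabla_X U) + {\bf T}_X U = \msH(\nabla_X U)$ since $[X,U]$ is vertical and ${\bf T}_X = 0$ on horizontal fields. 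Throughout, I would repeatedly invoke the identities of Theorem \ref{ONT}, particularly ${\bf T}_X = 0$ on horizontal vectors, the symmetry ${\bf T}_U V = {\bf T}_V U$, and the anti-symmetry $g({\bf T}_U V, X) = -g({\bf T}_U X, V)$.

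With these in hand, I would work through the six cases by brute force. Case (vi) is immediate: for four horizontal inputs, the connection reduces to horizontal lifts of the $B$-connection and the result is the pullback of $R_B$. Case (v) follows from the algebraic symmetries of $R$ combined with the vanishing of ${\bf A}$. Case (i) is a Gauss-type equation for the vertical fibers, producing $R_F$ (correctly scaled through the warping) plus the usual shape-operator correction $g({\bf T}_U W, {\bf T}_V W') - g({\bf T}_V W, {\bf T}_U W')$. Cases (ii), (iii), and (iv) are Codazzi-Mainardi-type identities obtained by expanding $R(U,V)W$, $R(X,U)Y$, and $R(U,V)X$ and carefully commuting $\nabla$ past ${\bf T}$ to recognize the covariant derivative terms $(\nabla_E {\bf T})_F G$.

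The main obstacle I anticipate is the bookkeeping in cases (ii) and (iii): separating the pieces that assemble into a covariant derivative of the tensor ${\bf T}$ from leftover error terms, and verifying that the error terms cancel via Theorem \ref{ONT}. A secondary subtlety is handling the bracket terms $[X,U], [U,V], [X,Y]$ in the definition of $R$ correctly—each splits into horizontal and vertical parts whose vertical/horizontal components must be recognized as ${\bf T}$-contributions or as brackets in $B$ or $F$, respectively. Once these two points are managed carefully, the rest of the argument is routine tensor calculus and the six identities fall out of the same framework.
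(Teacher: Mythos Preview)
Your proposal outlines a correct and standard direct derivation of the O'Neill--Gray curvature formulas in the special case where the horizontal distribution is integrable (so ${\bf A}\equiv 0$). However, the paper does not actually prove this theorem at all: it is quoted verbatim as Theorem~9.28 of Besse \cite{besse} and used as a black box. The paper's ``proof'' is simply the citation.

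So while your plan is sound---the Koszul-formula computation of $\nabla$ on the four horizontal/vertical combinations, followed by case-by-case expansion of $R$ and recognition of the $(\nabla{\bf T})$ terms, is exactly how one establishes these identities---it goes well beyond what the paper does. If your goal is to match the paper, you should simply cite Besse and move on. If your goal is to supply an independent proof, your outline is correct, and the two subtleties you flag (bookkeeping the covariant derivative of ${\bf T}$ in cases (ii) and (iii), and tracking the bracket terms) are indeed the only places requiring care.
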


We now  introduce some conventions we will use to make our computations easier to follow. On the manifold $B\times F$, we assume coordinate vector fields $\p_1, \cdots, \p_{b+n}$, such that $\p_1,\ldots,\p_b$ are horizontal and $\p_{b+1},\ldots,\p_{b+n}$ are vertical and $g_x(\p_i,\p_j)=0$ for $i\neq j$, and  $g_x(\p_i,\p_j)=\beta(\check x)^2\delta_{ij}$ for $i,j\in\{b+1,\cdots,b+n\}$.
We adopt the convention that the indices $\lambda,\mu,\nu$ will be used for the base directions $\{1,\ldots,b\}$ and the indices $i,j,k,\ell$ will be used for the fiber directions $\{b+1,\ldots,b+n\}$. For an index that varies over all of $\{1,\ldots,b+n\}$, we use $s$.

The following lemma gives us the Christoffel symbols for a warped product metric. We note that there are six cases to consider depending on whether the coordinates are in the base or the fiber. The proof is a straightforward computation that we leave to the reader (see \cite{B} for more details).
\begin{lemma} Let $(B\times F, g_{B}+\beta^{2}g_{F})$ be as described above.
Then  at a point $x\in B\times F$, the Christoffel symbols are given by the following equations
\begin{align}\label{Christoffelatpoint}
\Gamma_{\lambda\mu}^\nu &= \check\Gamma_{\lambda\mu}^\nu,&
\Gamma_{ij}^k &= 0,\nonumber\\
\Gamma_{\lambda\mu}^k &= 0,&
\Gamma_{\lambda j}^\nu &=0,\\
\Gamma_{\lambda j}^k &= {\beta_\lambda\over\beta}\delta_{jk},&
\Gamma_{ij}^\nu &= -\beta\beta_\nu g_B^{\nu\nu}\delta_{ij}.\nonumber
\end{align}
\end{lemma}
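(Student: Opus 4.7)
My plan is to compute each Christoffel symbol directly from the coordinate formula
$$\Gamma_{ab}^c = \tfrac12 g^{cd}\bigl(\partial_a g_{bd} + \partial_b g_{ad} - \partial_d g_{ab}\bigr),$$
evaluated at the chosen point $x$. First I would record the block-diagonal form of $g$ and $g^{-1}$ at $x$ forced by the orthogonality assumption on the coordinates: $g_{\lambda\mu} = (g_B)_{\lambda\mu}$, $g_{\lambda j} = 0$, $g_{ij} = \beta(\check x)^2 (g_F)_{ij}(\hat x) = \beta(\check x)^2\delta_{ij}$, with corresponding inverse blocks $g^{\lambda\mu}=(g_B)^{\lambda\mu}$, $g^{\lambda j}=0$, $g^{ij}=\beta(\check x)^{-2}\delta^{ij}$. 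I would also take the coordinates to be normal on the fiber at $\hat x$ (compatible with the orthonormality assumption $(g_F)_{ij}(\hat x)=\delta_{ij}$), so that $\partial_m (g_F)_{ij}(\hat x)=0$. Consequently, every contraction $\sum_d g^{cd}(\cdots)$ collapses to a sum over indices of the same type (base or fiber) as $c$.

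Next I would exploit the fact that the three blocks depend only on some of the coordinates: $(g_B)_{\lambda\mu}$ and $\beta$ are functions of the base alone, $(g_F)_{ij}$ is a function of the fiber alone, and the cross components $g_{\lambda j}$ vanish identically. The only nonzero first derivatives that can therefore appear in the Christoffel formula are $\partial_\rho (g_B)_{\lambda\mu}$, $\partial_\rho \beta$, and $\partial_m(g_F)_{ij}$, and at $x$ the last of these also vanishes. Given these two observations, the six entries of the table follow one after another: for $\Gamma_{\lambda\mu}^\nu$ only the base-block derivatives survive, producing $\check\Gamma_{\lambda\mu}^\nu$; for $\Gamma_{ij}^k$ only $\beta^2\partial_m(g_F)_{ij}$ can appear, and this vanishes by the fiber-normal choice; for $\Gamma_{\lambda\mu}^k$ and $\Gamma_{\lambda j}^\nu$ every relevant derivative is either a base derivative of a cross term or a fiber derivative of the base block, so both vanish; for $\Gamma_{\lambda j}^k$ only $\partial_\lambda g_{jk} = 2\beta\beta_\lambda\delta_{jk}$ survives, giving $(\beta_\lambda/\beta)\delta_{jk}$ after contraction with $g^{km}=\beta^{-2}\delta^{km}$; and for $\Gamma_{ij}^\nu$ only $-\partial_\rho g_{ij} = -2\beta\beta_\rho\delta_{ij}$ survives, giving $-\beta\beta_\nu g_B^{\nu\nu}\delta_{ij}$ once contracted with $g^{\nu\rho}=g_B^{\nu\nu}\delta^{\nu\rho}$ (using the base-orthogonality at $\check x$, with no sum on $\nu$).

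The calculation is conceptually routine; the main obstacle will be purely organizational bookkeeping. I would need to (i) keep track carefully of which coordinate each block of the metric depends on, (ii) remember that the inverse metric carries a factor $\beta^{-2}$ on the fiber block which cancels the $\beta^2$ arising from base differentiation of the fiber block, and (iii) correctly apply the orthogonality conditions imposed at $x$ to each of the three terms in the Christoffel formula for every case.
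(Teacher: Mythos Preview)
Your proposal is correct and is exactly the ``straightforward computation'' the paper has in mind; the paper in fact omits the proof entirely and refers the reader to \cite{B}. Your only addition beyond the paper's explicit hypotheses is to take fiber-normal coordinates at $\hat x$ so that $\partial_m(g_F)_{ij}(\hat x)=0$, which is indeed needed to get $\Gamma_{ij}^k=0$ (and is consistent with, though slightly stronger than, the orthonormality condition the paper states).
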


From now on, we let $v_{\Bb}$ and $v_{\Ff}$ denote the horizontal and vertical parts of $v$.
Using Parts (i.), (v.), and (vi.) of Theorem \ref{GONRm} and the symmetries of the curvature tensor, we obtain the following lemma, giving us an expression for the Riemann curvature tensor of a $2$-plane, $P\subset T_{x}M$ generated by $v$ and $w$ in terms of the horizontal and vertical parts of each vector.  
\begin{lemma}  Let  $(B\times F, g_{B}+\beta^{2}g_{F})$ be as described above. Given $x\in B\times F$, consider an arbitrary $2$-dimensional subspace $P\subset T_{x}M$. We let $v,w\in P\subset T_{x}M$ be an arbitrary pair of linearly independent vectors. Then 
\begin{equation}\label{RPSumSimpler2}
\begin{split}
R(v,w,w,v)\hspace{0.2cm}   &=  R_B(\check{v}_{\Bb}, \check{w}_{\Bb},\check{w}_{\Bb},\check{v}_{\Bb})\\
&+ \beta^2 R_{F}(\hat{v}_{\Ff}, \hat{w}_{\Ff},\hat{w}_{\Ff},\hat{v}_{\Ff})+g({\bf T}_{v_{\Ff}}{w_{\Ff}},{\bf T}_{v_{\Ff}}{w_{\Ff}})-g({\bf T}_{v_{\Ff}}{v_{\Ff}},{\bf T}_{w_{\Ff}}{w_{\Ff}})\\\\
&+g((\nabla_{v_{\Bb}}{\bf T})_{w_{\Ff}}{w_{\Ff}},{v_{\Bb}})-g({\bf T}_{w_{\Ff}}{v_{\Bb}},{\bf T}_{w_{\Ff}}{v_{\Bb}})\\
&+g((\nabla_{w_{\Bb}}{\bf T})_{v_{\Ff}}{v_{\Ff}},{w_{\Bb}})-g({\bf T}_{v_{\Ff}}{w_{\Bb}},{\bf T}_{v_{\Ff}}{w_{\Bb}})\\\\
&-2g((\nabla_{w_{\Ff}}{\bf T})_{v_{\Ff}}{w_{\Ff}},{v_{\Bb}})+2g((\nabla_{v_{\Ff}}{\bf T})_{w_{\Ff}}{w_{\Ff}},{v_{\Bb}})\\
&+2g((\nabla_{w_{\Ff}}{\bf T})_{v_{\Ff}}{v_{\Ff}},{w_{\Bb}})-2g((\nabla_{v_{\Ff}}{\bf T})_{w_{\Ff}}{v_{\Ff}},{w_{\Bb}})\\\\
&+2g({\bf T}_{w_{\Ff}}{v_{\Bb}},  {\bf T}_{v_{\Ff}}{w_{\Bb}})-2g({\bf T}_{v_{\Ff}}{v_{\Bb}}, {\bf T}_{w_{\Ff}}{w_{\Bb}})\\
&+2g({\bf T}_{w_{\Ff}}{v_{\Bb}},{\bf T}_{v_{\Ff}}{w_{\Bb}})- 2g((\nabla_{v_{\Bb}}{\bf T})_{w_{\Ff}}{v_{\Ff}},{w_{\Bb}}).
\end{split}
\end{equation}

\end{lemma}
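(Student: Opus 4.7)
The proof is chiefly a bookkeeping exercise driven by the multilinearity of the Riemann curvature tensor. The plan is to decompose $v = v_\Bb + v_\Ff$ and $w = w_\Bb + w_\Ff$, then expand
\[
R(v,w,w,v) = \sum R(v_\star, w_\star, w_\star, v_\star)
\]
into $2^4 = 16$ summands, where each of the four slots independently receives either its horizontal or its vertical component. Each summand is then brought into the canonical form of one of the six cases (i)-(vi) of Theorem \ref{GONRm}, using the standard symmetries $R(a,b,c,d) = -R(b,a,c,d) = -R(a,b,d,c) = R(c,d,a,b)$, together with the identifications $g(u,u') = g_B(\check u, \check u')$ on horizontal vectors and $g(u,u') = \beta^2 g_F(\hat u, \hat u')$ on vertical ones.

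I would organize the 16 terms by signature, i.e., by the $H/V$ pattern in the four slots. The signature $HHHH$ contributes line 1 via (vi); the signature $VVVV$ contributes line 2 via (i), with the factor $\beta^2$ in front of $R_F$ arising from the vertical metric convention just noted. Each of the four signatures with exactly three $H$'s and one $V$ vanishes by (v) combined with a single skew-symmetry. Each of the four signatures with exactly one $H$ and three $V$'s reduces via (ii) to a difference of two $g((\nabla \cdot{\bf T})_\cdot\cdot,\cdot)$ terms; after keeping track of the slot permutations needed to reach the canonical form of (ii), these four summands combine to produce lines 5 and 6.

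The remaining six signatures have two $H$'s and two $V$'s. Those of signature $HHVV$ and $VVHH$ fall under (iv) after a pair swap and yield identical expressions that combine to give line 7. The signatures $HVHV$ and $VHVH$ are of type (iii) (the latter after two compensating transpositions), and their sum produces line 8 once one invokes the identity $g((\nabla_X{\bf T})_U V, Y) = g((\nabla_Y{\bf T})_V U, X)$ — itself a consequence of (iii) applied to the pair symmetry $R(X,U,Y,V) = R(Y,V,X,U)$ — to collapse two distinct $(\nabla{\bf T})$-terms into a single doubled term. Finally, the ``crossed'' signatures $HVVH$ and $VHHV$ each require a single adjacent transposition to reach canonical (iii) form, producing lines 3 and 4 respectively, where the extra sign converts the $g({\bf T}_U X,{\bf T}_V Y)$ summand of (iii) into the negative squared-norm terms $-g({\bf T}_{w_\Ff}v_\Bb,{\bf T}_{w_\Ff}v_\Bb)$ and $-g({\bf T}_{v_\Ff}w_\Bb,{\bf T}_{v_\Ff}w_\Bb)$.

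The principal obstacle is not conceptual but organizational: one must carefully track signs introduced by each symmetry operation, ensure that symmetrically related summands combine with the correct multiplicities (producing the factors of $2$ on lines 5 through 8), and recognize the $R(X,U,Y,V) = R(Y,V,X,U)$ collapse needed to reduce the two distinct $(\nabla{\bf T})$-terms to a single doubled one on line 8. Once this accounting is completed, the stated formula follows directly by summing the 16 contributions, with no additional geometric input beyond Theorems \ref{ONT} and \ref{GONRm}.
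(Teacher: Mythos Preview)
Your proposal is correct and follows essentially the same approach as the paper: the paper's own proof is a single sentence invoking Theorem \ref{GONRm} and the symmetries of the curvature tensor, and your multilinear expansion into sixteen $H/V$-signature summands is precisely the bookkeeping that this sentence leaves implicit. Your organization by signature, including the $R(X,U,Y,V)=R(Y,V,X,U)$ collapse needed for line 8, is accurate and in fact more explicit than what the paper provides.
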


Using Theorem \ref{ONT} and Display \ref{Christoffelatpoint}, we obtain the following expressions for the components of the ${\bf T}$ tensor at the point $x$:

$${\bf T}_{\p_i}{\p_{j}}=-\sum_{\lambda}\beta\beta_\lambda g_{B}^{\lambda\lambda}\delta_{ij}\p_\lambda, \,\,
\mathrm{ and }\,\,
{\bf T}_{\p_i}{\p_\lambda} =  \frac{\beta_\lambda}{\beta}\p_{i}.$$
With these expressions, we can compute the inner products involving  the $\bf T$ tensor in Display \ref{RPSumSimpler2}. 
We obtain
\begin{equation}\label{T}
\begin{split}
g({\bf T}_{\p_i}{\p_j}, {\bf T}_{\p_k}{\p_\ell})\,\,\,\,&=\,\,\beta^{2}\delta_{ij}\delta_{k\ell}\sum_\lambda(\beta_\lambda)^{2}g_{\lambda\lambda}^B,\\
g({\bf T}_{\p_i}{\p_\lambda}, {\bf T}_{\p_j}{\p_\mu})\,\,\,\,&=\,\,\beta_\lambda\beta_\mu\delta_{ij},\\
g({\bf T}_{\p_i}{\p_\lambda}, {\bf T}_{\p_j}{\p_\mu})\,\,\,\,&=\,\,\beta_\lambda\beta_\mu\delta_{ij},\\
g((\nabla_{\p_\lambda}{\bf T})_{\p_{i}} {\p_{j}},\p_\mu)& =\,\,\, -\delta_{ij}\left[\bigl(\beta\beta_{\lambda\mu}-\beta_\lambda\beta_\mu\bigr)+\sum_\nu \beta\beta_\nu\bigl(\p_\lambda(g_{B}^{\mu\nu})+g_{B}^{\nu\nu}\check\Gamma_{\lambda\nu}^\mu\bigr)g_{\mu\mu}^B\right], \,\, \mathrm{and}\\
g((\nabla_{\p_i}{\bf T})_{\p_{j}} {\p_{k}},\p_\lambda) &=\,\,\,0.\\
\end{split}
\end{equation}

Setting $v=\sum\limits_{s=1}^{b+n} v_s\p_s\quad\text{and}\quad w=\sum\limits_{s=1}^{b+n} w_s\p_s$, then 
$v_{\Bb}=\sum\limits_\lambda v_\lambda\p_\lambda, \,\,w_{\Bb}=\sum\limits_\lambda w_\lambda\p_\lambda,\,\,
v_{\Ff}=\sum\limits_i v_i\p_i,\,\, \mathrm{and }\,\,w_{\Ff}=\sum\limits_i w_i\p_i$.
With these conventions, from Display \ref{T}, we obtain 
\begin{equation}\label{metric}
\begin{split}
g({\bf T}_{v_{\Ff}}{w_{\Ff}},{\bf T}_{v_{\Ff}}{w_{\Ff}})\,\,\,\,\,&=\,\,\,\, \sum_{i,j,\lambda} v_iw_iv_jw_j\, \beta^{2}\beta_\lambda^{2}g_{\lambda\lambda}^B,\\
g({\bf T}_{v_{\Ff}}{v_{\Ff}},{\bf T}_{w_{\Ff}}{w_{\Ff}})\,\,\,\,\,&=\,\,\,\, \sum_{i,j,\lambda} v_i^2w_j^2\, \beta^{2}\beta_\lambda^{2}g_{\lambda\lambda}^B,\\
g({\bf T}_{w_{\Ff}}{v_{\Bb}},{\bf T}_{w_{\Ff}}{v_{\Bb}})\,\,\,\,\,&=\,\,\,\, \sum_{i,\lambda,\mu} w_i^2 v_\lambda v_\mu\,\beta_\lambda\beta_\mu,\\
g({\bf T}_{v_{\Ff}}{w_{\Bb}},{\bf T}_{v_{\Ff}}{w_{\Bb}})\,\,\,\,\,&=\,\,\,\, \sum_{i,\lambda,\mu} v_i^2 w_\lambda w_\mu\,\beta_\lambda\beta_\mu,\\
g({\bf T}_{v_{\Ff}}{v_{\Bb}}, {\bf T}_{w_{\Ff}}{w_{\Bb}})\,\,\,\,\,&=\,\,\,\, \sum_{i,\lambda,\mu} v_iw_i v_\lambda w_\mu\,\beta_\lambda\beta_\mu,\\
g({\bf T}_{w_{\Ff}}{v_{\Bb}},  {\bf T}_{v_{\Ff}}{w_{\Bb}})\,\,\,\,\,&=\,\,\,\, \sum_{i,\lambda,\mu} v_iw_i v_\lambda w_\mu\,\beta_\lambda\beta_\mu,\\
g((\nabla_{w_{\Ff}}{\bf T})_{v_{\Ff}}{w_{\Ff}},{v_{\Bb}})&=0,\\
g((\nabla_{v_{\Ff}}{\bf T})_{w_{\Ff}}{w_{\Ff}},{v_{\Bb}})&=0,\\
g((\nabla_{w_{\Ff}}{\bf T})_{v_{\Ff}}{v_{\Ff}},{w_{\Bb}})&=0,\\
g((\nabla_{v_{\Ff}}{\bf T})_{w_{\Ff}}{v_{\Ff}},{w_{\Bb}})&=0,\\
g((\nabla_{v_{\Bb}}{\bf T})_{w_{\Ff}}{w_{\Ff}},{v_{\Bb}}) &= -\sum_{i,\lambda,\mu} w_i^2v_\lambda v_\mu\left[\beta\beta_{\lambda\mu}-\beta_\lambda\beta_\mu+\sum_\nu \beta\beta_\nu\bigl(\p_\lambda(g_{B}^{\mu\nu})+g_{B}^{\nu\nu}\check\Gamma_{\lambda\nu}^\mu\bigr)g_{\mu\mu}^B\right],\\
g((\nabla_{w_{\Bb}}{\bf T})_{v_{\Ff}}{v_{\Ff}},{w_{\Bb}}) &= -\sum_{i,\lambda,\mu} v_i^2w_\lambda w_\mu\left[\beta\beta_{\lambda\mu}-\beta_\lambda\beta_\mu +\sum_\nu \beta\beta_\nu\bigl(\p_\lambda(g_{B}^{\mu\nu})+g_{B}^{\nu\nu}\check\Gamma_{\lambda\nu}^\mu\bigr)g_{\mu\mu}^B\right],\,\,\mathrm{and}\\
g((\nabla_{v_{\Bb}}{\bf T})_{w_{\Ff}}{v_{\Ff}},{w_{\Bb}})&= -\sum_{i,\lambda,\mu} v_iw_iv_\lambda w_\mu\left[\beta\beta_{\lambda\mu}-\beta_\lambda\beta_\mu +\sum_\nu \beta\beta_\nu\bigl(\p_\lambda(g_{B}^{\mu\nu})+g_{B}^{\nu\nu}\check\Gamma_{\lambda\nu}^\mu\bigr)g_{\mu\mu}^B\right].
\end{split}
\end{equation}
Substituting these equalities from Display \ref{metric} into Display \ref{RPSumSimpler2}, we obtain the following result.

\begin{proposition} Let  $(B\times F, g_{B}+\beta^{2}g_{F})$ be as described above. Given $x\in B\times F$, consider an arbitrary $2$-dimensional subspace $P\subset T_{x}M$. We let $v,w\in P\subset T_{x}M$ be an arbitrary pair of linearly independent vectors, written in components as $v=\sum\limits_{s=1}^{b+n} v_s\p_s\quad\text{and}\quad w=\sum\limits_{s=1}^{b+n} w_s\p_s$. Then  
\begin{equation}\label{Riemanncurvatureofwarped}
\begin{split}
R(v,w,w,v)  &=  R_B(\check{v}_{\Bb}, \check{w}_{\Bb},\check{w}_{\Bb},\check{v}_{\Bb})+ \beta^2 R_{F}(\hat{v}_{\Ff}, \hat{w}_{\Ff},\hat{w}_{\Ff},\hat{v}_{\Ff})\\
&-\sum_{i<j,\lambda} (v_iw_j-v_jw_i)^2\beta^{2}\beta_\lambda^{2}g_{\lambda\lambda}^B\\
&-\sum_{i,\lambda,\mu} (w_i^2v_\lambda v_\mu+v_i^2w_\lambda w_\mu-2v_iw_iv_\lambda w_\mu)\left[\beta\beta_{\lambda\mu}+\sum_\nu \beta\beta_\nu\bigl(\p_\lambda(g_{B}^{\mu\nu})+g_{B}^{\nu\nu}\check\Gamma_{\lambda\nu}^\mu\bigr)g_{\mu\mu}^B\right].
\end{split}
\end{equation}
\end{proposition}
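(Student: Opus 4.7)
The plan is a direct substitution. Display (\ref{RPSumSimpler2}) already isolates $R(v,w,w,v)$ as a sum of base and fiber Riemann tensors plus twelve $\mathbf{T}$-tensor bilinears and covariant-derivative bilinears, and Display (\ref{metric}) evaluates each of those twelve inner products in the coordinate frame adapted to the warped product. I would feed the latter into the former and collect like terms.

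First, I would discard the four vanishing expressions $g((\nabla_{v_\Ff}\mathbf{T})_{\cdot}\cdot,\cdot)=0$ from Display (\ref{metric}); these are precisely the four terms on the fifth and sixth lines of Display (\ref{RPSumSimpler2}) in which the direction of differentiation is itself vertical. The surviving $\mathbf{T}$-tensor terms separate into a purely vertical piece and a mixed horizontal-vertical piece. The purely vertical combination $g(\mathbf{T}_{v_\Ff} w_\Ff,\mathbf{T}_{v_\Ff} w_\Ff)-g(\mathbf{T}_{v_\Ff} v_\Ff,\mathbf{T}_{w_\Ff} w_\Ff)$ evaluates, by Display (\ref{metric}), to $\sum_{i,j,\lambda}(v_iw_iv_jw_j-v_i^2w_j^2)\beta^2\beta_\lambda^2 g_{\lambda\lambda}^B$, which I would reshape via the elementary identity $\sum_{i,j}(v_iw_iv_jw_j-v_i^2w_j^2)=-\sum_{i<j}(v_iw_j-v_jw_i)^2$ to recover the second line of Display (\ref{Riemanncurvatureofwarped}).

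For the remaining mixed-type $\mathbf{T}$-squared bilinears and the three surviving horizontal-derivative $(\nabla\mathbf{T})$ terms, Display (\ref{metric}) reveals a common quadratic factor built from $w_i^2 v_\lambda v_\mu$, $v_i^2 w_\lambda w_\mu$, and $-2v_iw_iv_\lambda w_\mu$. Factoring this out, one observes that the pure $\beta_\lambda\beta_\mu$ contributions from the mixed $\mathbf{T}$-squared terms cancel exactly against the $-\beta_\lambda\beta_\mu$ pieces inside the brackets of the last three lines of Display (\ref{metric}); what survives is precisely the bracket $\beta\beta_{\lambda\mu}+\sum_\nu\beta\beta_\nu(\p_\lambda(g_B^{\mu\nu})+g_B^{\nu\nu}\check\Gamma_{\lambda\nu}^\mu)g_{\mu\mu}^B$ appearing in the target formula.

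The hard part is exactly this last cancellation, which requires matching signs and multiplicities across six contributions (three $\mathbf{T}$-squared bilinears of mixed type and three $(\nabla\mathbf{T})$ terms), keeping careful track of the doubled coefficient coming from the two identical-looking occurrences of $2g(\mathbf{T}_{w_\Ff}v_\Bb,\mathbf{T}_{v_\Ff}w_\Bb)$ on the last two lines of Display (\ref{RPSumSimpler2}). This is a finite algebraic check, but one that rewards careful indexing; once completed, the proposition follows immediately by substitution.
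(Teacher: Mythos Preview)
Your proposal is correct and follows exactly the paper's approach: the paper's proof consists of the single sentence ``Substituting these equalities from Display (\ref{metric}) into Display (\ref{RPSumSimpler2}), we obtain the following result,'' and what you have written is a faithful expansion of that substitution. Your observations about the Lagrange-type identity for the purely vertical piece and the $\beta_\lambda\beta_\mu$ cancellation between the mixed $\mathbf{T}$-squared terms and the $-\beta_\lambda\beta_\mu$ inside the $(\nabla\mathbf{T})$ brackets are exactly the bookkeeping the paper leaves implicit.
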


Note that in the case of a  product metric, $\beta=1$. Since this makes each of the derivatives of $\beta$ zero, Equation \ref{Riemanncurvatureofwarped} reduces to the well-known formula
\begin{equation}\label{standard riemann product formula}
R(v,w,w,v)  =  R_B(\check{v}_{\Bb}, \check{w}_{\Bb},\check{w}_{\Bb},\check{v}_{\Bb})+ R_{F}(\hat{v}_{\Ff}, \hat{w}_{\Ff},\hat{w}_{\Ff},\hat{v}_{\Ff}).
\end{equation}

\subsection{The Sectional Curvature of the Specific Warped Product}

We now restrict our attention to a metric of the form 
$$g=dr^2+\omega(r,t)^2dt^2+\beta(r)^2ds_n^2$$
defined on the product manifold $M=B\times F$ where $B=(0,b_1)\times(0,b_2)$ and $F=S^n$ for $n\geq 2$ with the standard round metric, where $\beta:(0,b_1)\to(0,\infty)$ and $\omega:(0,b_1)\times(0,b_2)\to(0,\infty)$ are smooth warping functions. We are interested in computing the $(p,n+2)$-intermediate scalar curvatures of $(M,g)$. We begin by computing the sectional curvatures using what we have already computed. 

\begin{lemma}\label{curvatures-base}
The sectional curvatures of  $M=B\times F$ as above, are given by
\begin{equation}
 K_{rt}=-{\omega_{rr}\over\omega},\quad \displaystyle K_{ri}={-\beta_{rr}\over\beta},\quad K_{ti} = -{\omega_r\beta_r\over \omega\beta},\quad \displaystyle K_{ij}={1-\beta_r^2\over\beta^2}.
\end{equation}
\end{lemma}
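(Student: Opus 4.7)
The strategy is to apply the master formula (\ref{Riemanncurvatureofwarped}) in each of the four cases corresponding to the coordinate 2-planes $\{\partial_r,\partial_t\}$, $\{\partial_i,\partial_j\}$, $\{\partial_r,\partial_i\}$, and $\{\partial_t,\partial_i\}$. In each case one identifies the horizontal and vertical components of the two vectors, notes that most terms vanish, and substitutes the remaining data. Throughout, one exploits that $\beta=\beta(r)$ only, so $\beta_t=0$ and $\beta_{tt}=\beta_{rt}=0$, and that the fiber $(F,g_F)=(S^n,ds_n^2)$ has constant sectional curvature $1$.

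A preliminary step is to record the relevant data for the base $(B,g_B)$ with $g_B=dr^2+\omega(r,t)^2\,dt^2$. A direct computation from $g_B$ gives $\check\Gamma^r_{tt}=-\omega\omega_r$, $\check\Gamma^t_{rt}=\omega_r/\omega$, $\check\Gamma^t_{tt}=\omega_t/\omega$, with all remaining Christoffel symbols vanishing, and the standard 2D calculation then yields $R_B(\partial_r,\partial_t,\partial_t,\partial_r)=-\omega\omega_{rr}$. Applied to the horizontal pair $\{\partial_r,\partial_t\}$ in (\ref{Riemanncurvatureofwarped}), the only surviving term is $R_B$, and dividing by $g_{rr}g_{tt}=\omega^2$ gives $K_{rt}=-\omega_{rr}/\omega$.

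For the pure fiber pair $\{\partial_i,\partial_j\}$, the horizontal components vanish, so both $R_B$ and the last summation in (\ref{Riemanncurvatureofwarped}) drop out. The $R_F$ term contributes $\beta^2$ (constant curvature $1$ with orthonormal $\hat\partial_i,\hat\partial_j$ in $g_F$), while the middle sum collapses to $-\beta^2\beta_r^2$ since only $\lambda=r$ gives a nonzero $\beta_\lambda$ with $g^B_{rr}=1$. Dividing by $g_{ii}g_{jj}=\beta^4$ yields $K_{ij}=(1-\beta_r^2)/\beta^2$.

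For the mixed pairs $\{\partial_r,\partial_i\}$ and $\{\partial_t,\partial_i\}$, the $R_B$, $R_F$, and $-\sum_{i<j,\lambda}$ terms all vanish, so only the final bracket in (\ref{Riemanncurvatureofwarped}) contributes, and only the single summand with $\lambda=\mu\in\{r,t\}$ and the chosen fiber index survives. For $\lambda=\mu=r$, using $\check\Gamma^r_{rr}=0$ and $\beta_t=0$, the bracket reduces to $\beta\beta_{rr}$; dividing by $g_{rr}g_{ii}=\beta^2$ gives $K_{ri}=-\beta_{rr}/\beta$. For $\lambda=\mu=t$, the only nonzero contribution in the inner $\nu$-sum comes from $\nu=r$ with $g_B^{rr}\check\Gamma^t_{tr}=\omega_r/\omega$ and $g^B_{tt}=\omega^2$, so the bracket equals $\beta\beta_r\omega_r\omega$; dividing by $g_{tt}g_{ii}=\omega^2\beta^2$ yields $K_{ti}=-\omega_r\beta_r/(\omega\beta)$. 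The only obstacle here is purely bookkeeping: keeping straight which entries in (\ref{Riemanncurvatureofwarped}) are nontrivial under the various choices and correctly tracking the inverse versus covariant base metric factors.
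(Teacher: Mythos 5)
Your proof is correct and follows essentially the same approach as the paper: you compute the base Christoffel symbols for $g_B = dr^2+\omega^2 dt^2$, evaluate the one nontrivial $R_B$ component, and then read off each sectional curvature by substituting coordinate pairs into the master warped‑product formula (\ref{Riemanncurvatureofwarped}) and dividing by the appropriate $g_{aa}g_{bb}$. The only cosmetic difference is that the paper first specializes (\ref{Riemanncurvatureofwarped}) to the general simplified expression (\ref{fullriemformainmetric}) for arbitrary $v,w$ before plugging in coordinate vectors, whereas you substitute the coordinate pairs directly; the content is identical.
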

\begin{proof}
The metric on the base is  given by $g_B=dr^2 + \omega(r,t)^2 dt^2$. We  compute the Christoffel symbols and obtain
\begin{equation}\label{christoffelofwarped}
\begin{split}
\Gamma_{rr}^r&=0,\\
\Gamma_{rr}^t&=0,\\
\Gamma_{rt}^r&=0,\\
\Gamma_{tt}^r&=-\omega\omega_r,\\
\Gamma_{tt}^t&=\frac{\omega_t}{\omega},\,\, \mathrm{and}\\
\Gamma_{rt}^t&=\frac{\omega_r}{\omega}.
\end{split}
\end{equation}

 Note that since we are dealing with coordinate vector fields, we have $\nabla_{\partial_r}\partial_t=\nabla_{\partial_t}\partial_r$. Thus,
\begin{align*}
\nabla_{\partial_r}\partial_r&=0,\\
\nabla_{\partial_t}\partial_t&=-\omega\omega_r\partial_r +\frac{\omega_t}{\omega}\partial_t,\\
\nabla_{\partial_r}\partial_t&=\frac{\omega_r}{\omega}\partial_t.\end{align*}
 Since $\nabla_{\partial_r}\partial_r=0$, we have $\nabla_{\partial_t}\nabla_{\partial_r}\partial_r=0$ and a calculation gives
$\nabla_{\partial_r}\nabla_{\partial_t}\partial_r= \frac{\omega_{rr}}{\omega}\partial_t$.
Therefore
\begin{equation}\label{e1}
\langle R(\partial_t, \partial_r)\partial_r, \partial_t\rangle= -\frac{\omega_{rr}}{\omega}\omega^2.
\end{equation}


We now expand out the sums over the base directions $r$ and $t$ in our Riemann curvature equation \ref{Riemanncurvatureofwarped}. Since we know the form of the base metric and its inverse matrix, and that the function $\beta$ only depends on $r$, so that $\beta_t=0$. Equation \ref{Riemanncurvatureofwarped} then simplifies to
\begin{equation*}
\begin{split}
R(v,w,w,v)  &=  R_B(\check{v}_{\Bb}, \check{w}_{\Bb},\check{w}_{\Bb},\check{v}_{\Bb})+ \beta^2 R_{F}(\hat{v}_{\Ff}, \hat{w}_{\Ff},\hat{w}_{\Ff},\hat{v}_{\Ff})\\
&-\sum_{i<j,\lambda} (v_iw_j-v_jw_i)^2\beta^{2}\beta_r^{2}\\
&-\sum_{i} (w_i^2v_r v_r+v_i^2w_r w_r-2v_iw_iv_r w_r)\left[\beta\beta_{rr}+\beta\beta_r\check\Gamma_{rr}^r\right]\\
&-\sum_{i} (w_i^2v_r v_t+v_i^2w_r w_t-2v_iw_iv_r w_t)\beta\beta_r\check\Gamma_{rr}^t\omega^2\\
&-\sum_{i} (w_i^2v_t v_r+v_i^2w_t w_r-2v_iw_iv_t w_r)\beta\beta_r\check\Gamma_{tr}^r\\
&-\sum_{i} (w_i^2v_t v_t+v_i^2w_t w_t-2v_iw_iv_t w_t)\beta\beta_r\check\Gamma_{tr}^t\omega^2.\\
\end{split}
\end{equation*}
Using the Christoffel symbols we computed in Display \ref{christoffelofwarped}, we then obtain
\begin{equation*}
\begin{split}
R(v,w,w,v) &=R_B(\check{v}_{\Bb}, \check{w}_{\Bb},\check{w}_{\Bb},\check{v}_{\Bb})+ \beta^2 R_{F}(\hat{v}_{\Ff}, \hat{w}_{\Ff},\hat{w}_{\Ff},\hat{v}_{\Ff})
-\sum_{i<j}(v_iw_j-v_jw_i)^2\beta^{2}\beta_r^{2}\\
&\,\,\,\,\,\,\,\,-\sum_{i} (v_iw_r-w_iv_r)^2\beta\beta_{rr}
-\sum_{i}(v_iw_t-w_iv_t)^2\beta\beta_r\omega\omega_r.
\end{split}
\end{equation*}

The fibers $F$ are unit spheres of dimension at least $2$ with $K_F=1$, so we have
\begin{equation*}
\begin{split}
R_{F}(\hat{v}_{\Ff}, \hat{w}_{\Ff},\hat{w}_{\Ff},\hat{v}_{\Ff}) &= |\hat{v}_{\Ff}\wedge\hat{w}_{\Ff}|_F^2 K_F(\hat{v}_{\Ff},\hat{w}_{\Ff})\\
&=\sum_{i<j} (v_iw_j-v_jw_i)^2.
\end{split}
\end{equation*}

Similarly, the base $B$ is two-dimensional and from Lemma \ref{curvatures-base} its section curvature is given by $K_{rt}=-{\omega_{rr}\over\omega}$. So using the metric $g_B=dr^2+\omega^2dt^2$, we have
\begin{equation*}
\begin{split}
R_{F}(\check{v}_{\Bb}, \check{w}_{\Bb},\check{w}_{\Bb},\check{v}_{\Bb}) &= |\check{v}_{\Bb}\wedge\check{w}_{\Bb}|_B^2 K_F(\check{v}_{\Bb},\check{w}_{\Bb})\\
&= -\omega\omega_{rr}(v_tw_r-v_rw_t)^2\\
\end{split}
\end{equation*}
Therefore, we get
\begin{equation}\label{fullriemformainmetric}
\begin{split}
R(v,w,w,v) &= -\omega\omega_{rr}(v_tw_r-v_rw_t)^2+\sum_{i<j}(v_iw_j-v_jw_i)^2\beta^2(1-\beta_r^{2})\\
&\,\,\,\,\,\,\,\,-\sum_{i} (v_iw_r-w_iv_r)^2\beta\beta_{rr}-\sum_{i}(v_iw_t-w_iv_t)^2\beta\beta_r\omega\omega_r.\\
\end{split}
\end{equation}

We now compute the specific sectional curvatures when the vectors $v$ and $w$ are in the coordinate directions.
First we start with $v=\partial_t$ and $w=\partial_i$, so that $v_t=1$, $v_r=v_j=0$ for all $j$ directions in the fiber, $w_r=w_t=0$, $w_i=1$, and $w_j=0$ for all $j\neq i$. Substituting this information into Equation \ref{fullriemformainmetric}, we get
\begin{equation}\label{e2}
R_{tiit} =-(v_tw_i)^2\beta\beta_r\omega\omega_r=-\beta\beta_r\omega\omega_r.
\end{equation}

Next, let $v=\partial_r$ and $w=\partial_i$, so that $v_r=1$, $v_t=v_j=0$ for all $j$ directions in the fiber, $w_r=w_t=0$, $w_i=1$, and $w_j=0$ for all $j\neq i$. Substituting this information into Equation \ref{fullriemformainmetric}, we get
\begin{equation}\label{e3}
R_{riir} =-(v_rw_i)^2\beta\beta_{rr}=-2\beta_r^2-\beta\beta_{rr}.
\end{equation}

Finally, let $v=\partial_i$ and $w=\partial_j$ for $i\neq j$ so that $v_r=v_t=w_r=w_t=0$ and $v_k=w_k=0$, except for $v_i=1$ and $w_j=1$. Since $K_F=1$, we have
\begin{equation}\label{e4}
R_{ijji} =(v_iw_j)^2\beta^2(1-\beta_r^2) = \beta^2(1-\beta_r^2).
\end{equation}
The result now follows from Equations \ref{e1}, \ref{e2}, \ref{e3}, and \ref{e4}.
\end{proof}

\subsection{Intermediate Scalar Curvature}
We will now use the sectional curvatures we have computed in the previous section to derive a formula for the $(p,n+2)$-intermediate scalar curvatures of our metric. Let $0\leq p\leq n$ and let $P$ be a $p$-plane in $T_xM$. Then $P^\perp$ is a $(n-p+2)$-plane. Since the dimension of $P^\perp+T_xS^n$ can be at most the dimension of $T_xM$,
\begin{equation*}
\begin{split}
\dim(P^\perp\cap T_xS^n)&=\dim(P^\perp)+\dim(T_xS^n)-\dim(P^\perp+T_xS^n)\\
&\geq (n-p+2)+n-(n+2)\\
&=n-p.
\end{split}
\end{equation*}
Therefore, we know that there are at least $n-p$ linearly independent vectors in $P^\perp$ tangent to the sphere.

Without loss of generality, we will assume that the coordinate vector fields on the sphere are $\p_1,\ldots,\p_n$ where $\p_1,\ldots,\p_{n-p}$ are in $P^\perp$. Completing an orthogonal basis for $P^\perp$, we have at most two basis vectors $v,w$ that are not in $T_xS^n$. We consider three cases.

\noindent{\bf Case 1.} The projections of $v$ and $w$ into $T_xS^n$ span a $0$-dimensional subspace. This means that $v$ and $w$ do not have any fiber component and must be spans of $\p_r$ and $\p_t$. However, this means that $\p_r$ and $\p_t$ must be in $P^\perp$ and we can just assume that these are our $v$ and $w$. In other words, $P^\perp$ has the orthogonal basis $\{\p_r,\p_t,\p_1,\ldots,\p_{n-p}\}$. We compute the $(p,n+2)$-intermediate scalar curvature,
\begin{equation*}
\begin{split}
s_{p,n+2}(P)&=2\sum_{i<j} K_{ij}+2\sum_{i=1}^{n-p}K_{ri}+2\sum_{i=1}^{n-p}K_{ti}+2K_{rt}\\
&=(n-p)(n-p-1)\frac{1-\beta_r^2}{\beta^2}-2(n-p)\frac{\beta_{rr}}{\beta}-2(n-p)\frac{\omega_r\beta_r}{\omega \beta}-\frac{2\omega_{rr}}{\omega}.
\end{split}
\end{equation*}

\noindent{\bf Case 2.} The projections of $v$ and $w$ into $T_xS^n$ span a $1$-dimensional subspace. This subspace is orthogonal to the directions $\p_1,\ldots,\p_{n-p}$, and so without loss of generality, we can assume that it is spanned by $\p_{n-p+1}$. Setting $k=n-p+1$ for brevity, this means that the vectors $v$ and $w$ have the form
\begin{align*}
v&=v_r\p_r+v_t\p_t+v_k\p_k&w&=w_r\p_r+w_t\p_t+w_k\p_k
\end{align*}
Using Equation \ref{fullriemformainmetric}, for $1\leq i\leq n-p$, we obtain
\begin{equation*}
\begin{split}
R(v,\partial_i,\partial_i,v)\,\,\,& =\, v_k^2\beta^2(1-\beta_r^{2})-v_r^2\beta\beta_{rr}-v_t^2\beta\beta_r\omega\omega_r \,\,\,\,\mathrm{ and } \\
R(w,\partial_i,\partial_i,w) &=\, w_k^2\beta^2(1-\beta_r^{2})-w_r^2\beta\beta_{rr}-w_t^2\beta\beta_r\omega\omega_r.\\
\end{split}
\end{equation*}
Since $v$ and $w$ are orthogonal to $\partial_i$ and $\|\partial_i\| = \beta\|\partial_i\|_F = \beta$, the sectional curvatures are then
\begin{equation*}
\begin{split}
K(v,\partial_i) &= {1\over\|v\|^2}\left(v_k^2(1-\beta_r^{2})-v_r^2{\beta_{rr}\over\beta}-v_t^2{\beta_r\over\beta}\omega\omega_r\right)\,\,\,\,\mathrm{ and }\\
K(w,\partial_i) &= {1\over\|w\|^2}\left(w_k^2(1-\beta_r^{2})-w_r^2{\beta_{rr}\over\beta}-w_t^2{\beta_r\over\beta}\omega\omega_r\right).\\
\end{split}
\end{equation*}
On the other hand,
\begin{equation*}
R(v,w,w,v) =-\omega\omega_{rr}(v_tw_r-v_rw_t)^2-(v_kw_r-w_kv_r)^2\beta\beta_{rr}-(v_kw_t-w_kv_t)^2\beta\beta_r\omega\omega_r,
\end{equation*}
and since $v$, $w$ are orthogonal, we get
$$K(v,w)=-{1\over\|v\|^2\|w\|^2}\left(\omega\omega_{rr}(v_tw_r-v_rw_t)^2+(v_kw_r-w_kv_r)^2\beta\beta_{rr}+(v_kw_t-w_kv_t)^2\beta\beta_r\omega\omega_r\right)$$

The $(p,n+2)$-intermediate scalar curvature is then given by
\begin{equation*}
\begin{split}
s_{p,n+2}(P) &=2\sum_{i<j} K_{ij}+2\sum_{i=1}^{n-p}K(v,\p_i)+2\sum_{i=1}^{n-p}K(w,\p_i)+2K(v,w)\\
&=(n-p)(n-p-1)\frac{1-\beta_r^2}{\beta^2}\\
&+{2(n-p)\over\|v\|^2}\left(v_k^2(1-\beta_r^{2})-v_r^2{\beta_{rr}\over\beta}-v_t^2{\beta_r\over\beta}\omega\omega_r\right)\\
&+{2(n-p)\over\|w\|^2}\left(w_k^2(1-\beta_r^{2})-w_r^2{\beta_{rr}\over\beta}-w_t^2{\beta_r\over\beta}\omega\omega_r\right)\\
&-{2\over\|v\|^2\|w\|^2}\left(\omega\omega_{rr}(v_tw_r-v_rw_t)^2+(v_kw_r-w_kv_r)^2\beta\beta_{rr}+(v_kw_t-w_kv_t)^2\beta\beta_r\omega\omega_r\right)
\end{split}
\end{equation*}

\noindent{\bf Case 3.} The projections of $v$ and $w$ onto $T_xS^n$ span a $2$-dimensional subspace. This subspace is orthogonal to the directions $\p_1,\ldots,\p_{n-p}$, and so without loss of generality, we can assume that is is spanned by $\p_{n-p+1}$ and $\p_{n-p+2}$. Setting $k=n-p+1$ so that $k+1=n-p+2$, this means that the vectors $v$ and $w$ have the form
\begin{align*}
v&=v_r\p_r+v_t\p_t+v_k\p_k+v_{k+1}\p_{k+1},&w&=w_r\p_r+w_t\p_t+w_k\p_k+w_{k+1}\p_{k+1}.
\end{align*}
Using Equation \ref{fullriemformainmetric}, for $1\leq i\leq n-p$, we get
\begin{equation*}
\begin{split}
R(v,\partial_i,\partial_i,v) \,\,\,&= \,(v_k^2+v_{k+1}^2)\beta^2(1-\beta_r^{2})-v_r^2\beta\beta_{rr}-v_t^2\beta\beta_r\omega\omega_r, \,\,\,\,\mathrm{ and }\\
R(w,\partial_i,\partial_i,w) &=\, (w_k^2+w_{k+1}^2)\beta^2(1-\beta_r^{2})-w_r^2\beta\beta_{rr}-w_t^2\beta\beta_r\omega\omega_r.\\
\end{split}
\end{equation*}
Since $v$ and $w$ are orthogonal to $\partial_i$ and $\|\partial_i\| = \beta\|\partial_i\|_F = \beta$, then we obtain
\begin{equation*}
\begin{split}
K(v,\partial_i) &= {1\over\|v\|^2}\left((v_k^2+v_{k+1}^2)(1-\beta_r^{2})-v_r^2{\beta_{rr}\over\beta}-v_t^2{\beta_r\over\beta}\omega\omega_r\right),\,\,\,\,\mathrm{ and }\\
K(w,\partial_i) &= {1\over\|w\|^2}\left((w_k^2+w_{k+1}^2)(1-\beta_r^{2})-w_r^2{\beta_{rr}\over\beta}-w_t^2{\beta_r\over\beta}\omega\omega_r\right).\\
\end{split}
\end{equation*}
On the other hand, we have
\begin{equation*}
\begin{split}
R(v,w,w,v) &=-\omega\omega_{rr}(v_tw_r-v_rw_t)^2+(v_kw_{k+1}-v_{k+1}w_k)^2\beta^2(1-\beta_r^2)\\
&\,\,\,\,\,\,\,-[(v_kw_r-w_kv_r)^2+(v_{k+1}w_r-w_{k+1}v_r)^2]\beta\beta_{rr}\\
&\,\,\,\,\,\,\,-[(v_kw_t-w_kv_t)^2+(v_{k+1}w_t-w_{k+1}v_t)^2]\beta\beta_r\omega\omega_r,
\end{split}
\end{equation*}
and since $v$, $w$ are orthogonal, we get
\begin{equation*}
\begin{split}
K(v,w)&={1\over\|v\|^2\|w\|^2}\Bigl[-\omega\omega_{rr}(v_tw_r-v_rw_t)^2+(v_kw_{k+1}-v_{k+1}w_k)^2\beta^2(1-\beta_r^2)\\
&\hspace{1in}\null-[(v_kw_r-w_kv_r)^2+(v_{k+1}w_r-w_{k+1}v_r)^2]\beta\beta_{rr}\\
&\hspace{1in}\null-[(v_kw_t-w_kv_t)^2+(v_{k+1}w_t-w_{k+1}v_t)^2]\beta\beta_r\omega\omega_r\Bigr].
\end{split}
\end{equation*}

The $(p,n+2)$-intermediate scalar curvature is then given by
\begin{equation*}
\begin{split}
s_{p,n+2}(P) &=2\sum_{i<j} K_{ij}+2\sum_{i=1}^{n-p}K(v,\p_i)+2\sum_{i=1}^{n-p}K(w,\p_i)+2K(v,w)\\
&=(n-p)(n-p-1)\frac{1-\beta_r^2}{\beta^2}\\
&+{2(n-p)\over\|v\|^2}\left((v_k^2+v_{k+1}^2)(1-\beta_r^{2})-v_r^2{\beta_{rr}\over\beta}-v_t^2{\beta_r\over\beta}\omega\omega_r\right)\\
&+{2(n-p)\over\|w\|^2}\left((w_k^2+w_{k+1}^2)(1-\beta_r^{2})-w_r^2{\beta_{rr}\over\beta}-w_t^2{\beta_r\over\beta}\omega\omega_r\right)\\
&+{2\over\|v\|^2\|w\|^2}\biggl[-\omega\omega_{rr}(v_tw_r-v_rw_t)^2+(v_kw_{k+1}-v_{k+1}w_k)^2\beta^2(1-\beta_r^2)\\
&\hspace{1in}\null-[(v_kw_r-w_kv_r)^2+(v_{k+1}w_r-w_{k+1}v_r)^2]\beta\beta_{rr}\\
&\hspace{1in}\null-[(v_kw_t-w_kv_t)^2+(v_{k+1}w_t-w_{k+1}v_t)^2]\beta\beta_r\omega\omega_r\biggr].
\end{split}
\end{equation*}

Since we know the warping functions $\beta$ and $\omega$ are strictly positive by definition, and most coefficients in our final formulas for the intermediate scalar curvatures involve nonnegative squared terms, we summarize these three cases in the following proposition.

\begin{proposition}\label{pcurvatureofwarpedproduct}
Let $M=B\times F$ with $B=(0,b_1)\times(0,b_2)$ and $F=S^n$, having a metric of the form
$$g=dr^2+\omega(r,t)^2dt^2+\beta(r)^2ds_n^2$$
where $\beta:(0,\beta_1)\to(0,\infty)$ and $\omega:(0,b_1)\times(0,b_2)\to(0,\infty)$ are smooth warping functions. If $x\in M$ and $P$ is a $p$-plane in $T_xM$ for $p\in\{0,\cdots,n\}$, then the $(p,n+2)$-intermediate scalar curvature of $P$ has the form
\begin{equation*}
s_{p, n+2}(P)=(n-p)(n-p-1)\frac{1-\beta_r^2}{\beta^2}+A^2(1-\beta_r)^2-B^2\omega\omega_{rr}-C^2\beta\beta_{rr}-D^2\beta\beta_r\omega\omega_r,
\end{equation*}
for some real-valued functions $A=A(P,\beta)$, $B=B(P,\omega)$, $C=C(P,\beta)$, and $D=D(P,\beta,\omega)$.
\end{proposition}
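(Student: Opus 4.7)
The proposition is essentially a consolidation of the three explicit case computations carried out just above; the plan is to read off the claimed form from each of those formulas in turn. Since $\dim(P^\perp\cap T_xS^n)\geq n-p$, an orthonormal basis of $P^\perp$ consists of $n-p$ sphere directions together with at most two further basis vectors $v,w$, and the three cases correspond to whether the projections of $v,w$ into $T_xS^n$ span a $0$-, $1$-, or $2$-dimensional subspace. All three displayed expressions for $s_{p,n+2}(P)$ share the same sign structure: a single positive multiple of $(1-\beta_r^2)/\beta^2$, a nonnegative multiple of $(1-\beta_r^2)$, and nonpositive multiples of $\omega\omega_{rr}$, $\beta\beta_{rr}$, and $\beta\beta_r\omega\omega_r$. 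The content of the proposition is precisely that those three nonpositive coefficients, and the residual $(1-\beta_r^2)$ coefficient, can each be written as a genuine square of a real-valued function of $P$, $\beta$, $\omega$.

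The plan is to verify, term by term, that each of the coefficients of $-\omega\omega_{rr}$, $-\beta\beta_{rr}$, $-\beta\beta_r\omega\omega_r$, and (beyond the fixed $(n-p)(n-p-1)/\beta^2$ contribution) $(1-\beta_r^2)$ is a sum of nonnegative squared expressions in the components of $v$ and $w$, and is therefore expressible as $B^2$, $C^2$, $D^2$, or $A^2$ respectively. For example, in Case 1 one reads off directly $A=0$, $B^2=2/\omega^2$, $C^2=2(n-p)/\beta^2$, and $D^2=2(n-p)/(\beta\omega)^2$. In Case 2 the coefficient of $-\omega\omega_{rr}$ comes entirely from the cross term $K(v,w)$ and equals $2(v_tw_r-v_rw_t)^2/(\|v\|^2\|w\|^2)\geq 0$, the coefficient of $(1-\beta_r^2)$ collects the nonnegative contributions $2(n-p)v_k^2/\|v\|^2+2(n-p)w_k^2/\|w\|^2$, and similar nonnegative combinations govern $C^2$ and $D^2$. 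Case 3 is handled in the same way, picking up the additional nonnegative summand $2(v_kw_{k+1}-v_{k+1}w_k)^2\beta^2/(\|v\|^2\|w\|^2)$ in the $(1-\beta_r^2)$ coefficient and analogous extra squared terms in $C^2$ and $D^2$ from the extra sphere direction $\partial_{k+1}$.

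The main step, then, is simply careful bookkeeping: checking that every summand appearing in each of the three case formulas lands in one of the four prescribed slots, and that the resulting aggregate coefficient in each slot is manifestly a sum of nonnegative squares. There is no real obstacle here, because the substantive computations (the sectional curvatures from Lemma \ref{curvatures-base} and the general expression for $R(v,w,w,v)$ in Equation \ref{fullriemformainmetric}) have already been carried out; the proposition is an organizational statement, valuable chiefly because it exposes cleanly which derivatives of the warping functions must be controlled in order to guarantee positivity of $s_{p,n+2}$ in the applications to Theorem \ref{A} that follow.
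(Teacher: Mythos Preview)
Your proposal is correct and matches the paper's approach exactly: the paper presents Proposition \ref{pcurvatureofwarpedproduct} precisely as an organizational summary of the three case computations already carried out, noting that the warping functions are positive and that the coefficients in each case formula are nonnegative squared expressions. Your term-by-term bookkeeping, including the explicit Case 1 identifications and the observation that Cases 2 and 3 contribute only additional nonnegative square summands to each slot, is exactly the verification the paper is pointing to.
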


\section{Standard Metrics on the Sphere and the Disk}\label{s5}
\setcounter{figure}{0}

In this section we recall some well known metrics on the disk and sphere. In particular, we recall the so-called {\em torpedo} and {\em boot metrics}. Such metrics are described in detail in section 3 of \cite{walshNY} although in the context of positive scalar curvature. Here we will establish conditions whereby these metrics have positive $(p,n)$-intermediate scalar curvature for appropriate $p\in\{0,1,\cdots n-2\}$.

\subsection{Introducing the Metrics}
Before we get into a formal construction of the metrics, we give a very brief description with the aid of Figures \ref{torpcorners1} and \ref{torpcorners3} below. An important point to note is that each space will be topologically the disk $D^{n+2}$, but will be distinguished by their metrics.

A {\em $(\delta,\lambda)$-torpedo metric} on a disk, $D^{n+2}$, (where $n\geq 0$) is a metric which takes the form of a length $\lambda$-cylinder of a round $(n+1)$-sphere of radius $\delta$ near the boundary of the disk, before closing up as a round $(n+2)$-dimensional hemisphere at the center. We denote such a metric, $g_{\tor}^{n+2}(\delta)_{\lambda}$ and it is depicted by the first picture in Fig. \ref{torpcorners1}.

Restricting such a metric to an upper half-disk  $D_{+}^{n+2}$ results in a {\em half-$(\delta,\lambda)$-torpedo metric}, denoted $g_{\tor+}^{n+2}(\delta)_{\lambda}$. This is the second picture in Fig. \ref{torpcorners1}.

By carefully gluing a half-torpedo metric, $g_{\tor+}^{n+2}(\delta)_{\lambda}$ on $D_{+}^{n+2}$, to a cylinder of torpedo metrics, $g_{\tor}^{n+1}(\delta)_{\lambda}+dt^{2}$ on $D^{n+1}\times [0,1]$ along a $D^{n+1}$ contained in the boundary of $D_+^{n+2}$, we obtain a metric denoted $g_{\toe}^{n+2}(\delta)_{\lambda_1, \lambda_2}$, on the manifold with corners, $D_{\mathrm{stretch}}^{n+2}$, obtained by attaching and smoothing the underlying manifolds $D_{+}^{n+2}$ and $D^{n+1}\times [0,1]$. We will refer to the metric $g_{\toe}^{n+2}(\delta)_{\lambda_1, \lambda_2}$ as the {\em toe metric} for its shape and its role in the next construction. This is the third picture in Figure \ref{torpcorners1}. 

\begin{figure}[!htbp]
\vspace{1cm}
\hspace{4cm}
\begin{picture}(0,0)
\includegraphics{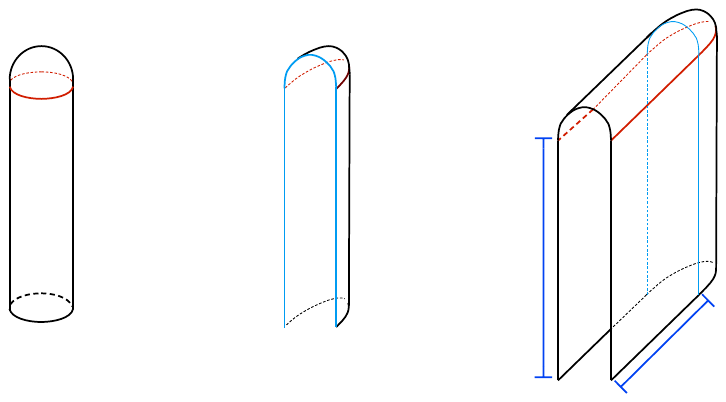}%
\end{picture}
\setlength{\unitlength}{3947sp}
\begin{picture}(5079,1559)(1902,-7227)
\put(4400,-6500){\makebox(0,0)[lb]{\smash{{\SetFigFont{10}{8}{\rmdefault}{\mddefault}{\updefault}{\color[rgb]{0,0,1}$\lambda_1$}%
}}}}
\put(5230,-7010){\makebox(0,0)[lb]{\smash{{\SetFigFont{10}{8}{\rmdefault}{\mddefault}{\updefault}{\color[rgb]{0,0,1}$\lambda_2$}%
}}}}
\put(1500,-7300){\makebox(0,0)[lb]{\smash{{\SetFigFont{10}{8}{\rmdefault}{\mddefault}{\updefault}{\color[rgb]{0,0,0}$(D^{n+2},g_{\tor}^{n+2}(\delta)_{\lambda})$}%
}}}}
\put(2900,-7300){\makebox(0,0)[lb]{\smash{{\SetFigFont{10}{8}{\rmdefault}{\mddefault}{\updefault}{\color[rgb]{0,0,0}$(D^{n+2}_+,g_{\tor+}^{n+2}(\delta)_{\lambda})$}%
}}}}
\put(4300,-7500){\makebox(0,0)[lb]{\smash{{\SetFigFont{10}{8}{\rmdefault}{\mddefault}{\updefault}{\color[rgb]{0,0,0}$(D^{n+2}_{\mathrm{stretch}},g_{\toe}^{n+2}(\delta)_{\lambda_1,\lambda_2})$}%
}}}}
\end{picture}%
\vspace{1cm}
\caption{Various metrics on the disk $D^{n+2}$.}
\label{torpcorners1}
\end{figure}

Finally, we introduce an {\em $(n+2)$-dimensional $\delta$-boot metric}. Briefly, this metric is constructed in 4 steps as follows.

\noindent{\bf Step 1.} Beginning with some torpedo metric, $g_{\tor}^{n+1}(\delta)_{\lambda}$, trace out a cylinder of torpedo metrics before bending the cylinder around an angle of $\frac{\pi}{2}$ to finish as a Riemannian cylinder perpendicular to the first part of the cylinder in the direction suggested by the rightmost image of Figure \ref{torpcorners3}. The resulting object has two cylindrical ends with different metrics. One of the form $dr^{2}+g_{\tor}^{n+1}(\delta)_{\lambda}$ and the other $dt^{2}+g_{\tor}^{n+1}(\delta)_{\lambda}$, where $r$ and $t$ are orthogonal coordinates depicted in Figure \ref{torpcorners3}. 

\noindent{\bf Step 2.} In order to control any negative sectional curvatures arising from the bend, we control the bending with a parameter $\Lambda>0$. Essentially, the bending takes place along a quarter-circle of radius $\Lambda>0$. A large choice of $\Lambda$ ensures that negative curvatures arising from the bend are small.

\noindent{\bf Step 3.} Away from the ``caps" of the torpedos, this metric takes the form $dr^{2}+dt^{2}+\delta^{2}ds_{n}^{2}$. This part can easily be extended to incorporate the corner depicted in the rightmost image of Fig. \ref{torpcorners1} and so that the necks of the torpedo ``ends" have any desired lengths, $l_1$ and $l_4$. These distance along with $\Lambda$ determine the distances $l_2$ and $l_3$, which are pictured in Figure \ref{torpcorners3}.

\noindent{\bf Step 4.} Finally, we smoothly ``cap-off" the cylindrical end which takes the form $dt_{1}^{2}+g_{\tor}^{n-1}(\delta)_{l_1}$, by attaching a half-torpedo metric, $g_{\tor+}^{n+2}(\delta)_{l_1}$. This is the so-called ``toe" of the boot metric.

The resulting metric is denoted $g_{\boot}^{n+2}(\delta)_{\Lambda, \bar{l}}$ where $\Lambda>0$ is the bending constant discussed above and $\bar{l}=(l_1, l_2, l_3,
l_4)\in\R_+^{4}$ determines the various neck-lengths. While the choices of $l_1$ and $l_4$ are arbitrary, the constants $l_2$ and $l_3$, as mentioned above, are determined by $\Lambda, l_1$ and $l_4$.

\begin{figure}[!htbp]
\vspace{3cm}
\hspace{5cm}
\begin{picture}(0,0)
\includegraphics{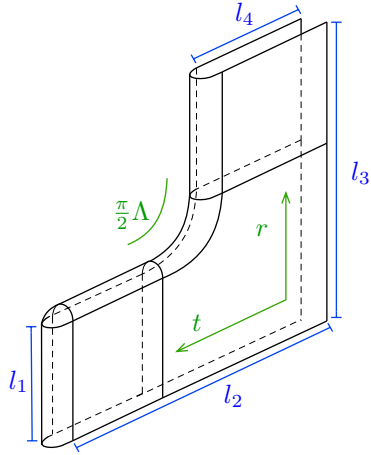}%
\end{picture}
\setlength{\unitlength}{3947sp}
\begin{picture}(5079,1559)(5922,-7137)
\put(7500,-5700){\makebox(0,0)[lb]{\smash{{\SetFigFont{10}{8}{\rmdefault}{\mddefault}{\updefault}{\color[rgb]{0,0.6,0}$r$}%
}}}}
\put(7100,-6300){\makebox(0,0)[lb]{\smash{{\SetFigFont{10}{8}{\rmdefault}{\mddefault}{\updefault}{\color[rgb]{0,0.6,0}$t$}%
}}}}
\put(6600,-5600){\makebox(0,0)[lb]{\smash{{\SetFigFont{10}{8}{\rmdefault}{\mddefault}{\updefault}{\color[rgb]{0,0.6,0}$\frac{\pi}{2}\Lambda$}
}}}}
\put(5950,-6660){\makebox(0,0)[lb]{\smash{{\SetFigFont{10}{8}{\rmdefault}{\mddefault}{\updefault}{\color[rgb]{0,0,1}$l_1$}%
}}}}
\put(7300,-6750){\makebox(0,0)[lb]{\smash{{\SetFigFont{10}{8}{\rmdefault}{\mddefault}{\updefault}{\color[rgb]{0,0,1}$l_2$}%
}}}}
\put(8100,-5350){\makebox(0,0)[lb]{\smash{{\SetFigFont{10}{8}{\rmdefault}{\mddefault}{\updefault}{\color[rgb]{0,0,1}$l_3$}%
}}}}
\put(7370,-4350){\makebox(0,0)[lb]{\smash{{\SetFigFont{10}{8}{\rmdefault}{\mddefault}{\updefault}{\color[rgb]{0,0,1}$l_4$}%
}}}}
\end{picture}%
\caption{The boot metric $g_{\boot}^{n+2}(\delta)_{\Lambda, \bar{l}}$ }
\label{torpcorners3}
\end{figure}

In the remainder of this section we will establish some results about the $(p,n+2)$-intermediate scalar curvature of these metrics. This requires a somewhat more detailed description. 

\subsection{The Torpedo Metric}
We consider a pair of smooth functions $\alpha, \beta:[0,b]\rightarrow [0,\infty)$, where $b>0$, which satisfy the following conditions.
\begin{enumerate}

\item[]\begin{equation}
\begin{array}{clll}\label{beta0}
\mathrm{(i)} \quad &\beta(r)>0, \text{ for all } r\in(0,b), &&\\
\mathrm{(ii)} \quad &\beta(0)=0, \quad\beta'(0)=1, \quad\beta^{(even)}(0)=0,&&\\
\mathrm{(iii)} \quad &\beta(b)=0, \quad\beta'(b)=-1, \quad\beta^{(even)}(b)=0,\text{ and}&&
\end{array}
\end{equation}
\item[]\begin{equation}\label{alpha0}
\alpha(r)=\int_r^{b\over 2}\sqrt{1-\beta'(u)^{2}}du
\end{equation}

\end{enumerate}

The important point to note here is that $\alpha$ and $\beta$ satisfy $(\alpha')^2+(\beta')^2=1$. In particular, if $\beta(r)=\sin r$ on $(0,b)=(0,\pi)$, then $\alpha(r)=\cos r$. We now consider the map, $F_{\beta}$, defined by:
\begin{equation*}
\begin{split}
F_\beta:(0,b)\times{S^{n+1}}&\longrightarrow\mathbb{R}^{n+2}\times\mathbb{R},\\
(r,\theta)\,\,\,\,\,\,\,\,\,\,&\longmapsto(\beta(r)\theta, \alpha(r)).
\end{split}
\end{equation*}
\begin{proposition}\label{warpingfunctionleadstoembedding}\cite[Prop. 3.1]{walshNY}
For any smooth functions $\alpha, \beta:[0,b]\rightarrow [0,\infty)$ satisfying the conditions laid out in \ref{alpha0} and \ref{beta0}, the map $F_\beta$ above is an embedding.
\end{proposition}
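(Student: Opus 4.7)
The plan is to verify the defining properties of an embedding in three stages: smooth immersion, injectivity, and homeomorphism onto the image. Smoothness of $F_\beta$ is immediate from the smoothness of $\alpha$ and $\beta$.

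For the immersion property, I would compute $dF_\beta$ directly. For $V \in T_\theta S^{n+1}$, we have $dF_\beta(V) = (\beta(r)V, 0)$, and since $\beta(r)>0$ on $(0,b)$ these vectors span an $(n+1)$-dimensional subspace orthogonal both to $\theta$ and to the last coordinate. The $\partial_r$-direction yields $(\beta'(r)\theta, \alpha'(r))$, which is a unit vector by the Pythagorean relation $(\alpha')^2 + (\beta')^2 = 1$ built into the definition of $\alpha$. Whichever of $\alpha'(r)$ and $\beta'(r)$ is nonzero (at least one is, as their squares sum to $1$), the vector $(\beta'(r)\theta,\alpha'(r))$ has a nonzero component either along the radial direction $\theta$ or along the last axis, both of which are orthogonal to the span of the sphere directions. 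Hence $dF_\beta$ has maximal rank at every point.

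For injectivity of $F_\beta$, suppose $F_\beta(r_1,\theta_1)=F_\beta(r_2,\theta_2)$. Equating the last coordinate gives $\alpha(r_1)=\alpha(r_2)$, and taking norms of the first component gives $\beta(r_1)=\beta(r_2)$, which then forces $\theta_1=\theta_2$. To conclude $r_1=r_2$, I would introduce the plane curve $\gamma(r) := (\beta(r),\alpha(r))$, which is unit speed. Since $\alpha' = -\sqrt{1-(\beta')^2} \leq 0$, the function $\alpha$ is non-increasing. If $\gamma(r_1)=\gamma(r_2)$ with $r_1<r_2$, then $\alpha$ must be constant on $[r_1,r_2]$, forcing $|\beta'|\equiv 1$ there; but then $\beta$ is affine of slope $\pm 1$ on that subinterval, contradicting $\beta(r_1)=\beta(r_2)$. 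Thus $\gamma$ is injective and $r_1=r_2$.

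For the homeomorphism onto the image, I would show $F_\beta$ is proper onto its image. Given a sequence $(r_n,\theta_n)$ with $F_\beta(r_n,\theta_n) \to F_\beta(r^*,\theta^*)$, the identities $\beta(r_n)\theta_n\to\beta(r^*)\theta^*$ and $\alpha(r_n)\to\alpha(r^*)$ imply $\beta(r_n)\to\beta(r^*)>0$. Passing to any subsequence along which $r_n\to r' \in [0,b]$, the endpoint values $\beta(0)=\beta(b)=0$ rule out $r'=0$ and $r'=b$, so $r'\in(0,b)$ and compactness of $S^{n+1}$ yields $\theta_n\to\theta'$ along a further subsequence. Continuity of $F_\beta$ and the injectivity just established give $(r',\theta')=(r^*,\theta^*)$, so the full sequence converges and $F_\beta$ is an embedding. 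The main obstacle I expect is the injectivity of $\gamma$, since conditions (\ref{beta0})--(\ref{alpha0}) do not directly forbid $|\beta'|=1$ on a subinterval of $(0,b)$; the resolution is that on any such subinterval $\beta$ itself is strictly monotonic with slope $\pm 1$, so the component of $\gamma$ that fails to separate points via $\alpha$ necessarily separates them via $\beta$.
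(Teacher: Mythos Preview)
Your argument is correct and complete: the immersion check via the unit-speed relation, the injectivity via monotonicity of $\alpha$ together with the affine-$\beta$ contradiction, and the sequential verification that the inverse is continuous all go through as written.

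The paper itself does not supply a proof of this proposition; it simply cites \cite[Prop.~3.1]{walshNY} and moves on. So there is nothing to compare against here---you have provided a self-contained argument where the paper defers to an external reference. One minor remark: what you call ``properness onto the image'' in the final step is really the continuity of $F_\beta^{-1}$ on the image (equivalently, that $F_\beta$ is a closed map onto its image), which is exactly what is needed; the terminology is slightly nonstandard but the content is right.
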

\noindent Pulling back the Euclidean metric on $\mathbb{R}^{n+2}\times\mathbb{R}$ via $F_{\beta}$ induces a metric, $g_{\beta}$, which we compute to be
\begin{equation*}\label{gbeta}
\begin{split}
g_{\beta}:=&F_{\beta}^{*}(dx_1^{2}+dx_{2}^{2}+\cdots+dx_{n+2}^{2}+dx_{n+3}^{2})\\
=&dr^{2}+\beta(r)^{2}ds_{n+1}^{2},
\end{split}
\end{equation*}
where $ds_{n+1}^{2}$ is the standard round metric of radius $1$ on $S^{n+1}$. The following proposition is proved in Chapter 1, Section 3.4 of Petersen \cite{petersen}.
\begin{proposition}\cite{petersen}
Provided the smooth function $\beta:[0,b]\rightarrow [0,\infty)$ satisfies the conditions laid out in \ref{beta0}, the metric $g_{\beta}$ extends uniquely to a rotationally symmetric metric on $S^{n+2}$. Furthermore, if we drop condition (iii) of \ref{beta0} and simply insist that $\beta(b)>0$, this metric is now a smooth rotationally symmetric metric on the disk $D^{n+2}$.
\end{proposition}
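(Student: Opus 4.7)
The plan is to verify smoothness at the two special values $r=0$ and $r=b$, since on the open interval $(0,b)$ the expression $g_\beta=dr^2+\beta(r)^2 ds_{n+1}^2$ is manifestly a smooth Riemannian metric wherever $\beta>0$, and the embedding $F_\beta:(0,b)\times S^{n+1}\to\mathbb{R}^{n+3}$ from the preceding proposition gives a concrete geometric realization as a smooth hypersurface of revolution. The entire question thus reduces to whether this hypersurface closes up smoothly at the degenerate ends, where a full $S^{n+1}$ slice collapses to a single point.

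For the disk statement, where only $\beta(b)>0$ is assumed, the endpoint $r=b$ is handled trivially: a one-sided collar $(b-\varepsilon,b]\times S^{n+1}$ carries an honest warped product metric with warping function bounded away from zero, yielding a smooth manifold-with-boundary whose boundary sphere inherits the round metric $\beta(b)^2 ds_{n+1}^2$. So all serious content lives at $r=0$, and once the smooth extension there is established, the disk case is complete; the sphere case then follows by applying the same argument at the second pole after the reparametrization $\tilde r = b-r$, since condition (iii) says that $\tilde\beta(\tilde r):=\beta(b-\tilde r)$ satisfies at $\tilde r=0$ exactly the conditions (ii) imposed on $\beta$ at $0$.

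The main step is therefore smoothness at the pole $r=0$, which I would handle via the standard Petersen criterion: a warped product $dr^2+\beta(r)^2 ds_{n+1}^2$ extends smoothly to a metric on a neighborhood of a pole in $\mathbb{R}^{n+2}$ precisely when $\beta$ admits an odd smooth extension through $0$ with $\beta'(0)=1$. Concretely, introduce Cartesian coordinates $y=(y_1,\dots,y_{n+2})$ on a neighborhood of the would-be pole so that $r=|y|$ and $\theta=y/|y|$ for $y\neq 0$, and pull $g_\beta$ back through the change of variables. Each entry of the metric tensor in the $y$-coordinates becomes a rational combination of $\beta(r)/r$, $\beta'(r)$, $y_i y_j/r^2$, and polynomial terms in $y$. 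The criterion for each entry to extend smoothly to $y=0$ is that $\beta(r)/r$ and $\beta'(r)$ are smooth \emph{even} functions of $r$ near $0$; by Taylor expansion, this is equivalent to $\beta$ being odd with $\beta'(0)=1$, which is exactly the content of condition (ii), namely $\beta(0)=0$, $\beta'(0)=1$, and vanishing of all even-order derivatives of $\beta$ at $0$.

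The hard part is this coordinate-change bookkeeping at the pole: one must verify that the angular part of $g_\beta$, which naively blows up like $\beta(r)^2/r^2$ times a rank-$(n+1)$ projection, combines cleanly with $dr^2$ to produce a smooth positive-definite tensor in the $y$-coordinates. The vanishing of all even derivatives of $\beta$ at $0$ is precisely what guarantees this cancellation to all orders of Taylor expansion, and is the indispensable hypothesis. Uniqueness of the rotationally symmetric extension is then automatic: any two smooth metrics on $S^{n+2}$ agreeing on the dense open set $(0,b)\times S^{n+1}$ must coincide everywhere by continuity of smooth tensor fields.
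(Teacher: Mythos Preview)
Your argument is correct and is precisely the standard proof from Petersen's textbook. Note that the paper does not supply its own proof of this proposition: it simply cites Chapter~1, Section~3.4 of \cite{petersen}, so there is no in-paper argument to compare against---your outline is essentially the argument one finds there.
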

\noindent In particular, by setting $\beta(t)=\delta\sin{r\over\delta}$ for $r\in[0,\delta\pi]$, we obtain for $g_{\beta}$ the standard round metric of radius $\delta$ on $S^{n+2}$.

\begingroup\singlespacing
Let $\mu:\R\to[0,1]$ be the function
$$\mu(r)=\begin{cases}0&r\leq 0\cr {e^{-1/r}\over e^{-1/r}+e^{-1/(1-r)}}&r\in(0,1)\cr 1&r\geq 1\end{cases}$$
\endgroup
that smoothly transitions from $0$ to $1$. For any $\delta>0$ and $\lambda\geq 0$, let $\eta_{\delta,\lambda}:[0,\frac{\pi\delta}{2}+\lambda]\rightarrow[0,1]$ be the smooth function with derivative
$$\eta_{\delta,\lambda}'(r) = \cos\left({r\over\delta}\right)\mu\left(2-{4r\over\delta\pi}\right).$$
This function satisfies conditions (i.) and (ii.) of (\ref{beta0}) as well as the following:
\begin{enumerate}\label{torpcond}
\item[(i)] $\eta_{\delta,\lambda}(r)=\delta\sin{r\over \delta}$ when $r\leq {\pi\delta\over 4}$,
\item[(ii)] $\eta_{\delta, \lambda}(r)=C\delta$ when $r\geq\frac{\pi\delta}{2}$ for $C\approx 0.916$,
\item[(iii)] $\eta_{\delta,\lambda}(r)\leq\delta\sin{r\over\delta}$ and $\eta'_{\delta,\lambda}(r)\leq\cos{r\over\delta}$ for ${\pi\delta\over 4}<r<{\pi\delta\over 2}$,
\item[(iv)] $\eta''_{\delta,\lambda}(r)\leq 0$, and
\item[(v)] the $k^{\mathrm{th}}$ derivative at $\frac{\pi\delta}{2}$, $\eta_{\delta,\lambda}^{(k)}(\frac{\pi\delta}{2})=0$ for all $k\geq 1$.
\end{enumerate} 

The function $\eta_{\delta,\lambda}$ is known as a {\em torpedo function}. As it satisfies conditions (i) and (ii) of \ref{beta0} and has $\eta_{\delta,\lambda}( \frac{\pi}{2}+\lambda)>0$, it gives rise to a smooth metric on $D^{n+2}$. The resulting metric is called a {\em torpedo metric of radius $\delta$} and {\em neck length} $\lambda$ (or $(\delta,\lambda)$-torpedo metric). It is denoted $g_{\tor}^{n+2}(\delta)_{\lambda}$ and given by the formula:
$$g_{\tor}^{n+2}(\delta)_{\lambda}=dr^{2}+\eta_{\delta, \lambda}(r)^{2}ds_{n+1}^{2},$$ 
where $r\in [0,\frac{\pi\delta}{2}+\lambda]$.
Such a metric is rotationally symmetric metric on the disk $D^{n+2}$ and roughly, a round hemisphere of radius $\delta$ near the center of the disk and takes a radius $\delta$ cylindrical form on the annular region where $r\in[\frac{\pi\delta}{2}, \frac{\pi\delta}{2}+\lambda]$; see Fig. \ref{torpfunc}. 

\begin{figure}[!htbp]
\vspace{-1cm}
\hspace{3cm}
\begin{picture}(0,0)
\includegraphics{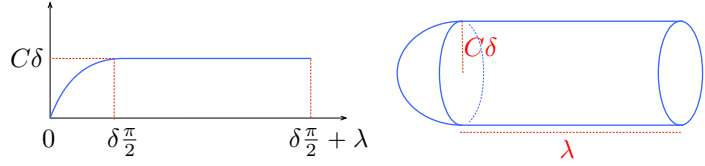}%
\end{picture}
\setlength{\unitlength}{3947sp}%
\begin{picture}(5079,1559)(1902,-7227)
\put(1850,-7250){\makebox(0,0)[lb]{\smash{{\SetFigFont{10}{8}{\rmdefault}{\mddefault}{\updefault}{\color[rgb]{0,0,0}$0$}%
}}}}
\put(1650,-6750){\makebox(0,0)[lb]{\smash{{\SetFigFont{10}{8}{\rmdefault}{\mddefault}{\updefault}{\color[rgb]{0,0,0}$C\delta$}%
}}}}
\put(4500,-6680){\makebox(0,0)[lb]{\smash{{\SetFigFont{10}{8}{\rmdefault}{\mddefault}{\updefault}{\color[rgb]{1,0,0}$C\delta$}%
}}}}
\put(2260,-7250){\makebox(0,0)[lb]{\smash{{\SetFigFont{10}{8}{\rmdefault}{\mddefault}{\updefault}{\color[rgb]{0,0,0}$\delta\frac{\pi}{2}$}%
}}}}
\put(3400,-7250){\makebox(0,0)[lb]{\smash{{\SetFigFont{10}{8}{\rmdefault}{\mddefault}{\updefault}{\color[rgb]{0,0,0}$\delta\frac{\pi}{2}+\lambda$}%
}}}}
\put(5100,-7320){\makebox(0,0)[lb]{\smash{{\SetFigFont{10}{8}{\rmdefault}{\mddefault}{\updefault}{\color[rgb]{1,0,0}$\lambda$}%
}}}}
\end{picture}%
\caption{A torpedo function $\eta_{\delta, \lambda}$ and the resulting torpedo metric $g_{\tor}^{n+2}(\delta)_{\lambda}$ on the disk.}
\label{torpfunc}
\end{figure}

\begin{proposition}\label{positivity of torpedo curvature}
Suppose $n\geq 1$ and $p$ satisfies $0\leq p\leq n-1$. For any $\delta>0, \lambda\geq 0$ the metric $g_{\tor}^{n+2}(\delta)_{\lambda}$ has positive $(p,n+2)$-intermediate scalar curvature. Moreover, this curvature can be bounded below with an arbitrarily large positive constant by choosing $\delta$ sufficiently small.
\end{proposition}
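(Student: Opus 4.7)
The plan is to exploit the warped product structure $g_{\tor}^{n+2}(\delta)_\lambda = dr^2 + \eta_{\delta,\lambda}(r)^2\, ds_{n+1}^2$ on $D^{n+2}$, mirroring the approach of Section \ref{s4} but with a one-dimensional base $(0,\tfrac{\pi\delta}{2}+\lambda)$ and an $(n+1)$-dimensional fiber $S^{n+1}$. First I would record the sectional curvatures: writing $H=\partial_r$ for the unit horizontal vector and $V,W$ for orthonormal unit vertical vectors, one has $K(H,V)=-\eta''/\eta$ and $K(V,W)=(1-(\eta')^2)/\eta^2$, the one-dimensional-base analogues of Lemma \ref{curvatures-base}. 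Moreover, for any unit vector of the form $v=aH+bu$ with $a^2+b^2=1$ and $u$ a unit vertical vector orthogonal to $W$, the cross Riemann component $R(H,W,W,u)$ vanishes, an instance of part (ii) of Theorem \ref{GONRm} combined with Display \ref{metric}, whence
\[
K(v,W)=a^2\,(-\eta''/\eta)+b^2\,(1-(\eta')^2)/\eta^2.
\]

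Next I would fix $x\in D^{n+2}$ and a $p$-plane $P\subset T_xM$. Then $P^\perp$ has dimension $n+2-p$, and its intersection with the vertical subspace $\msV_x$ (of dimension $n+1$) has dimension at least $n+1-p$, leading to two cases. In \emph{Case A} ($P^\perp\subseteq\msV_x$, which forces $p\geq 1$), a vertical orthonormal basis $\{e_1,\dots,e_{n+2-p}\}$ of $P^\perp$ gives
\[
s_{p,n+2}(P)=(n+2-p)(n+1-p)\,\tfrac{1-(\eta')^2}{\eta^2}.
\]
In \emph{Case B} ($P^\perp\not\subseteq\msV_x$), an orthonormal basis takes the form $\{e_1,\dots,e_{n+1-p},v\}$ with each $e_i$ vertical and $v=a\partial_r+bu$ ($a\neq 0$, $a^2+b^2=1$, $u$ a unit vertical vector orthogonal to every $e_i$); expanding using the sectional curvature formulas above yields
\[
s_{p,n+2}(P)=(n+1-p)\bigl[(n-p)+2b^2\bigr]\tfrac{1-(\eta')^2}{\eta^2}+2(n+1-p)\,a^2\,\bigl(-\tfrac{\eta''}{\eta}\bigr).
\]

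Both expressions are non-negative combinations of $(1-(\eta')^2)/\eta^2$ and $-\eta''/\eta$, both of which are $\geq 0$ by conditions (iii) and (iv) on $\eta_{\delta,\lambda}$ ($\eta''\leq 0$ and $|\eta'(r)|\leq\cos(r/\delta)\leq 1$). For strict positivity, observe that for $r>0$ one has $\cos(r/\delta)<1$, so $(1-(\eta')^2)/\eta^2>0$; at the apex $r=0$, where the metric is a smooth round $\delta$-hemisphere with $\eta=\delta\sin(r/\delta)$, both quantities extend continuously with value $1/\delta^2>0$. The combinatorial prefactors $(n+2-p)(n+1-p)$ and $(n+1-p)[(n-p)+2b^2]$ are strictly positive for $0\leq p\leq n-1$, since then $n+1-p\geq 2$ and $n-p\geq 1$. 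Hence $s_{p,n+2}(P)>0$ at every $x$ and every $P$.

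Finally, for the arbitrarily large lower bound as $\delta\to 0$, I would use a scaling argument: substituting $\tilde r=r/\delta$ and $\tilde\eta(\tilde r)=\eta_{\delta,\lambda}(r)/\delta$ rewrites the metric as $\delta^2\bigl(d\tilde r^2+\tilde\eta(\tilde r)^2\,ds_{n+1}^2\bigr)$, under which all sectional curvatures (and hence $s_{p,n+2}$) scale by $1/\delta^2$. Applying the preceding positivity argument to the rescaled reference metric on its compact domain yields a uniform lower bound $C_0>0$, and transporting back gives $s_{p,n+2}\geq C_0/\delta^2\to\infty$ as $\delta\to 0$. The main technical obstacle is verifying the vanishing of the cross Riemann term $R(H,W,W,u)$ in Case B; once this is in hand, everything else reduces to a careful but mechanical case analysis and a standard homothety.
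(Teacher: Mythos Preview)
Your proof is correct and follows essentially the same approach as the paper: both use the warped product structure $dr^2+\eta^2 ds_{n+1}^2$, both split into cases according to how $P^\perp$ sits relative to the vertical subspace, and both reduce positivity to the inequalities $(1-(\eta')^2)/\eta^2>0$ and $-\eta''/\eta\geq 0$ coming from the defining properties of $\eta_{\delta,\lambda}$. Your Case~A/Case~B split is a cosmetic reorganization of the paper's Case~1/Case~2 (the paper's Case~2 with $v_r=0$ is your Case~A, and its Case~1 is your Case~B with $b=0$), and the resulting formulas agree.

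One small caveat on your final scaling step: the rescaled domain is $[0,\tfrac{\pi}{2}+\lambda/\delta]$, which is not uniformly compact as $\delta\to 0$ when $\lambda>0$, so ``compactness of the domain'' alone does not give a $\delta$-independent $C_0$. You need the extra remark that on the cylindrical part $\tilde r\geq \pi/2$ the rescaled warping function is the constant $C$, so the reference curvatures there are constant and the infimum is already attained on the fixed cap $[0,\pi/2]$; this makes $C_0$ genuinely independent of $\delta$. The paper sidesteps this by directly proving $(1-(\eta')^2)/\eta^2\geq 1/\delta^2$ pointwise in each region, which is slightly more explicit but amounts to the same thing.
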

\begin{proof}
Excluding the point at $r=0$, the metric $g_{\tor}^{n+2}(\delta)_\lambda$ is the warped product metric $dr^2+\beta^2ds_{n+1}^2$ where $\beta=\eta_{\delta,\lambda}$. Recall the Riemann curvatures of a warped product from Equation \ref{Riemanncurvatureofwarped}. Here the base is the one-dimensional $B=(0,{\pi\delta\over 2})$ and the fibers are spheres $F=S^{n+1}$ with constant sectional curvature equal to $1$. Therefore Equation \ref{Riemanncurvatureofwarped} reduces to
\begin{equation*}
R(v,w,w,v)  =  \sum_{i<j} (v_iw_j-v_jw_i)^2\beta^{2}\Bigl(1-(\beta')^{2}\Bigr)-\sum_{i} (v_r w_i - v_i w_r)^2\beta\beta''.
\end{equation*}

If $P$ is a $p$-plane in $T_xM$, then $P^\perp$ has dimension $n-p+2$. Since the dimension of $P^\perp+T_xS^{n+1}$ can be at most the dimension of $T_xM$,
\begin{equation*}
\begin{split}
\dim(P^\perp\cap T_xS^{n+1})&=\dim(P^\perp)+\dim(T_xS^{n+1})-\dim(P^\perp+T_xS^{n+1})\\
&\geq (n-p+2)+(n+1)-(n+2)\\
&=n-p+1.
\end{split}
\end{equation*}

Therefore, we have two cases.

\noindent{\bf Case 1.} The projection of $P^\perp$ into $T_xM$ has dimension $n-p+1$. This means there is a direction in $P^\perp$ orthogonal to $T_xM$, and so we can take $\{\partial_1,\ldots,\partial_{n-p+1},\partial_r\}$ as an orthonormal basis for $P^\perp$ where $\partial_1,\ldots,\partial_{n-p+1}$ are tangent to the sphere. Therefore the $(p,n+2)$-intermediate curvature is given by

 \begin{equation}\label{torpedo ISC case 1}
\begin{split}
s_{p,n+2}(P) &= \sum_{i,j=1}^{n-p+1} K(\p_i,\p_j) + 2\sum_{i=1}^{n-p+1} K(\p_i,\p_r)\\
&=(n-p+1)(n-p){1-(\beta')^2\over\beta^2}-2(n-p+1){\beta''\over\beta}.\\
\end{split}
\end{equation}

\noindent{\bf Case 2.} The projection of $P^\perp$ into $T_xM$  has dimension $n-p+2$. Then we can take an orthonormal basis $\{\partial_1,\ldots,\partial_{n-p+1},v\}$ for $P^\perp$ where $\partial_1,\ldots,\partial_{n-p+1}$ are tangent to the sphere. We can write $v=v_{n-p+2}\partial_{n-p+2}+v_r\partial_r$ where $\partial_{n-p+2}$ is a unit vector in $T_xS^{n+1}$ orthogonal to the other $\partial_i$. Therefore the $(p,n+2)$-intermediate sectional curvature is given by

\begin{equation}\label{torpedo ISC case 2}
\begin{split}
s_{p,n+2}(P) &= \sum_{i,j=1}^{n-p+1} K(\p_i,\p_j) + 2\sum_{i=1}^{n-p+1} K(\p_i,v)\\
&=(n-p+1)(n-p){1-(\beta')^2\over\beta^2}\\
&\quad\null+{2\over||v||^2}(n-p+1)\left[v_{n-p+2}^2\bigl(1-(\beta')^{2}\bigr)-v_r^2{\beta''\over\beta}\right].\\
\end{split}
\end{equation}

Since $\beta>0$, $(\beta')^2\leq 1$, and $\beta''\leq 0$, we have that $-\frac{\beta''}{\beta}\geq 0$ and $1-(\beta')^2\geq 0$, so the second terms of Equations \ref{torpedo ISC case 1} and \ref{torpedo ISC case 2} are nonnegative. Consider $K_{ij}={1-(\beta')\over\beta^2}$. For $0<r\leq{\pi\delta\over 4}$, we have $\beta(r)=\eta_{\delta,\lambda}(r)=\delta\sin{r\over\delta}$ and $\beta'(r)=\cos{r\over\delta}$. Therefore, $$K_{ij} = {1-\cos^2({r\over\delta})\over\delta^2\sin^2({r\over\delta})}={\sin^2({r\over\delta})\over\delta^2\sin^2({r\over\delta})}={1\over\delta^2}.$$
On the other hand, when $r\geq{\pi\delta\over 2}$, we have $\beta(r)=\eta_{\delta,\lambda}(r)=C\delta$ and $\beta'(r)=0$. Hence in this case, as $C<1$, we immediately have $K_{ij}={1\over C^2\delta^2}>{1\over\delta^2}$. In the transition region ${\pi\delta\over 4}<r<{\pi\delta\over 2}$, since  $\eta_{\delta,\lambda}(r)\leq\delta\sin{r\over\delta}$ and $\eta'_{\delta,\lambda}(r)\leq\cos{r\over\delta}$, then $$K_{ij} \geq {1-\cos^2({r\over\delta})\over\delta^2\sin^2({r\over\delta})}={1\over\delta^2}.$$

 This means that as long as $r>0$, the second terms of Equations \ref{torpedo ISC case 1} and \ref{torpedo ISC case 2} are non-negative, while the first terms consist of $K_{ij}\geq{1\over\delta^2}$. Since $p\leq n-1$, then $n-p>0$, and so the first terms are both strictly positive and can be bounded below by an arbitrarily large positive constant by choosing $\delta$ sufficiently small.

This leaves just the point at $r=0$. Since $\beta(r)=\eta_{\delta,\lambda}(r)=\delta\sin{r\over\delta}$ when $r$ is near $0$, and the sectional curvature is continuous, we can compute the curvature at this point using limits. Note that $\beta(0)=0$, $\beta'(0)=1$, and $\beta''(0)=0$. Since $\beta'''(r)=-{1\over\delta^2}\cos{r\over\delta}$ for $r$ near $0$, we also have $\beta'''(0)=-{1\over\delta^2}$. Then we apply L'H\^opital, and we get the sectional curvatures
\begin{equation*}
\lim\limits_{r\to0}\frac{1-(\beta')^2}{\beta^2}=\lim\limits_{r\to0}-\frac{2\beta''\beta'}{2\beta'\beta}=\lim\limits_{r\to0}-\frac{\beta''}{\beta}=\lim\limits_{r\to0}-\frac{\beta'''}{\beta'}={1\over\delta^2}.
\end{equation*}
Hence at $r=0$, all sectional curvatures are in fact equal to the positive value $1\over\delta^2$. Therefore $s_{p,n+2}(P)=2(n-p+2)(n-p+1){1\over\delta^2}$ is positive, so that $g_{\tor}^{n+2}(\delta)_\lambda$ has positive $(p,n+2)$-intermediate scalar curvature that can be bounded below by an arbitrarily large positive constant by choosing $\delta$ sufficiently small. 
\end{proof}

We also need to consider the product of a torpedo metric with an interval in the construction of the boot metric.

\begin{proposition}\label{positivity of torpedo curvature crossed with an interval}
Suppose $n\geq 1$ and $p$ satisfies $0\leq p\leq n-2$. For any $\delta>0, \lambda\geq 0$ the metric $g_{\tor}^{n+1}(\delta)_{\lambda}+dt^2$ on the product $D^{n+1}\times[0,1]$ has positive $(p,n+2)$-intermediate scalar curvature.
\end{proposition}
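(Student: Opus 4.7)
The plan is to reduce $(p,n+2)$-intermediate scalar curvatures of $g := g_{\tor}^{n+1}(\delta)_{\lambda} + dt^{2}$ to the positivity already established for the torpedo factor in Proposition \ref{positivity of torpedo curvature}. The key observation is that $\partial_t$ is a parallel vector field for $g$, so the Riemann tensor $R$ of $g$ vanishes whenever any of its arguments is $\partial_t$; equivalently, $R(X,Y,Z,W) = R^{\tor}(X_T,Y_T,Z_T,W_T)$, where $T := T_x(D^{n+1}\times\{t_0\})$ is the torpedo tangent space and $X_T$ denotes the projection of $X$ onto $T$. In particular $K(u,\partial_t) = 0$ for every $u$.

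Fix $x \in D^{n+1}\times[0,1]$ and a $p$-plane $P \subset T_xM$. I split into two cases according to whether $P^{\perp} \subseteq T$. In Case~I, $P^{\perp} \subseteq T$ (equivalently $\partial_t \in P$), so $P \cap T$ is a $(p-1)$-plane in $T$ whose orthogonal complement inside $T$ is precisely $P^{\perp}$. By the definition of $s_{p,n+2}$ and the observation above, $s_{p,n+2}(P)$ equals the $(p-1,n+1)$-intermediate scalar curvature of $P \cap T$ computed in the torpedo, which is positive by Proposition \ref{positivity of torpedo curvature} applied to $g_{\tor}^{n+1}(\delta)_{\lambda}$ with admissible parameter $p-1$ (note that Case~I forces $p \geq 1$, since $\dim T = n+1 < n+2 = \dim T_xM$ precludes $P^{\perp} = T_xM$). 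In Case~II, $P^{\perp} \not\subseteq T$, so $Q := P^{\perp} \cap T$ has dimension $n+1-p$. I pick an orthonormal basis $\{e_1,\ldots,e_{n+1-p}\}$ of $Q$ and extend to a basis of $P^{\perp}$ by $e_{n+2-p} = av + b\partial_t$, where $v$ is a unit vector in $T$ orthogonal to $Q$ when $p \geq 1$ (if $p = 0$ I instead take $e_{n+2-p} = \partial_t$ and omit $v$), with $a^{2}+b^{2} = 1$ and $b \neq 0$.

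A short multilinear expansion, combined with the vanishing of $R$ on $\partial_t$, yields $K(e_i, e_{n+2-p}) = a^{2}\, K^{\tor}(e_i, v)$ for each $1 \leq i \leq n+1-p$. Setting $A := \sum_{i<j\leq n+1-p} K^{\tor}(e_i, e_j)$ and $B := \sum_{i} K^{\tor}(e_i, v)$ (with $B := 0$ when $p = 0$), the orthonormal basis expansion gives $s_{p,n+2}(P) = A + a^{2} B$. The quantity $A$ is precisely the torpedo $(p, n+1)$-intermediate scalar curvature of the $p$-plane in $T$ whose complement in $T$ is $Q$, and, when $p\geq 1$, $A + B$ is the torpedo $(p-1, n+1)$-intermediate scalar curvature of the $(p-1)$-plane whose complement in $T$ is $\mathrm{span}(Q, v)$. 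Both are positive by Proposition \ref{positivity of torpedo curvature}, as the parameters $p$ and $p-1$ lie in $\{0,\ldots,n-2\}$. Since $a^{2} \in [0,1]$, either $B \geq 0$, giving $A + a^{2} B \geq A > 0$, or $B < 0$, giving $A + a^{2} B \geq A + B > 0$; the subcase $p = 0$ is handled directly by $s_{0,n+2}(P) = A > 0$. The principal difficulty is the basis bookkeeping in Case~II and the combinatorial matching of $A$ and $A+B$ with the two torpedo intermediate scalar curvatures from Proposition \ref{positivity of torpedo curvature}; once that is in place, the linear lower bound $A + a^{2} B \geq \min\{A,\, A+B\}$ finishes the argument.
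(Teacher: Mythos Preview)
Your argument is correct and is a genuinely different, cleaner route than the paper's. The paper treats $g_{\tor}^{n+1}(\delta)_\lambda+dt^{2}$ as a warped product over the flat two-dimensional base $(0,\frac{\pi\delta}{2}+\lambda]\times[0,1]$ with $S^{n}$ fibers, writes down the Riemann tensor explicitly via Equation~\ref{Riemanncurvatureofwarped}, and then runs through three cases according to $\dim(P^{\perp}\cap T_{x}S^{n})$, checking in each case that the leading term $(n-p)(n-p-1)\tfrac{1-\beta_r^{2}}{\beta^{2}}$ is positive and the remaining terms are non-negative; the tip $r=0$ is handled by a separate limit computation. Your approach instead exploits the Riemannian product structure directly: since $\partial_t$ is parallel, every curvature involving $\partial_t$ vanishes, and the problem reduces to the already-established positivity of the torpedo factor (Proposition~\ref{positivity of torpedo curvature}) at levels $p$ and $p-1$, with the convex-combination estimate $A+a^{2}B\geq\min\{A,A+B\}$ closing the gap. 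This mirrors exactly the interpolation trick the paper itself uses in Lemma~\ref{isotopyconcordance}, and it avoids redoing any warped-product computation or treating $r=0$ separately. The paper's approach has the advantage of producing explicit formulas in the same framework used later for the toe and boot metrics; yours is shorter and makes the dependence on Proposition~\ref{positivity of torpedo curvature} transparent. One cosmetic point: with the paper's convention $s_{p,n}=\sum_{i,j}K(e_i,e_j)$ (ordered pairs), your $A$ and $A+B$ are each \emph{half} of the corresponding torpedo intermediate scalar curvatures, and $s_{p,n+2}(P)=2A+2a^{2}B$; this does not affect the positivity conclusion.
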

\begin{proof}
Set $g=g_{\tor}^{n+1}(\delta)_{\lambda}+dt^2$. Excluding the point at $r=0$, the metric $g_{\tor}^{n+1}(\delta)_\lambda$ is the warped product metric $dr^2+\beta^2ds_n^2$ where $\beta=\eta_{\delta,\lambda}$. Therefore we can consider $g$ for $r>0$ to be the warped product $g=g_B+\beta(r)^2ds_n^2$ where $g_B$ is the product metric $dt^2+dr^2$ on $B=(0,{\pi\delta\over2}+\lambda]\times[0,1]$. Recall the Riemann curvatures of a warped product from Equation \ref{Riemanncurvatureofwarped}. Here the base is the flat two-dimensional $B$ and the fibers are spheres $F=S^{n+1}$ with constant sectional curvature equal to $1$. Therefore Equation \ref{Riemanncurvatureofwarped} reduces to

\begin{equation*}
R(v,w,w,v)  =  \sum_{i<j} (v_iw_j-v_jw_i)^2\beta^{2}(1-\beta_r^2)-\sum_{i} (v_rw_i-v_iw_r)^2\beta\beta_{rr}
\end{equation*}

If $P$ is a $p$-plane in $T_xM$, then $P^\perp$ has dimension $n-p+2$. Since the dimension of $P^\perp+T_xS^n$ can be at most the dimension of $T_xM$,
\begin{equation*}
\begin{split}
\dim(P^\perp\cap T_xS^n)&=\dim(P^\perp)+\dim(T_xS^n)-\dim(P^\perp+T_xS^n)\\
&\geq (n-p+2)+n-(n+2)\\
&=n-p.
\end{split}
\end{equation*}

Therefore, we have three cases.

\noindent{\bf Case 1.} The projection of $P^\perp$ into $T_xM$ has dimension $n-p$. This means there are two directions in $P^\perp$ orthogonal to $T_xM$, and so we can take $\{\partial_1,\ldots,\partial_{n-p},\partial_r,\partial_t\}$ as an orthonormal basis for $P^\perp$ where $\partial_1,\ldots,\partial_{n-p}$ are tangent to the sphere. Therefore the $(p,n+2)$-intermediate curvature is given by

 \begin{equation}\label{torpedo ISC crossed interval case 1}
\begin{split}
s_{p,n+2}(P) &= \sum_{i,j=1}^{n-p} K(\p_i,\p_j) + 2\sum_{i=1}^{n-p} K(\p_i,\p_r)+2\sum_{i=1}^{n-p} K(\p_i,\p_t)+K(\p_t,\p_r)\\
&=(n-p)(n-p-1){1-\beta_r^2\over\beta^2}-2(n-p){\beta_{rr}\over\beta}.\\
\end{split}
\end{equation}

\noindent{\bf Case 2.} The projection of $P^\perp$ into $T_xM$  has dimension $n-p+1$. Then we can take an orthonormal basis $\{\partial_1,\ldots,\partial_{n-p},v,w\}$ for $P^\perp$ where $\partial_1,\ldots,\partial_{n-p+1}$ are tangent to the sphere and we can write
\begin{align*}
v&=v_{n-p+1}\partial_{n-p+1}+v_r\partial_r+v_t\partial_t,&
w&=w_{n-p+1}\partial_{n-p+1}+w_r\partial_r+w_t\partial_t,
\end{align*}
where $\partial_{n-p+1}$ is a unit vector in $T_xS^{n+1}$ orthogonal to the other $\partial_i$. Therefore the $(p,n+2)$-intermediate sectional curvature is given by

\begin{equation}\label{torpedo ISC crossed interval case 2}
\begin{split}
s_{p,n+2}(P) &= \sum_{i,j=1}^{n-p+1} K(\p_i,\p_j) + 2\sum_{i=1}^{n-p+1} K(\p_i,v)+2\sum_{i=1}^{n-p+1} K(\p_i,w) + 2K(v,w)\\
&=(n-p)(n-p-1){1-\beta_r^2\over\beta^2}\\
&\quad\null+{2(n-p)\over||v||^2}\left[v_{n-p+2}^2\bigl(1-\beta_r^{2}\bigr)-v_r^2{\beta_{rr}\over\beta}\right]\\
&\quad\null+{2(n-p)\over||w||^2}\left[w_{n-p+2}^2\bigl(1-\beta_r^{2}\bigr)-w_r^2{\beta_{rr}\over\beta}\right]\\
&\quad\null-{2\over||v||^2||w||^2}\left[ (v_rw_{n-p+1}-v_{n-p+1}w_r)^2\beta\beta_{rr}+ (v_rw_i-v_iw_r)^2\beta\beta_{rr}\right].
\end{split}
\end{equation}

\noindent{\bf Case 3.} The projection of $P^\perp$ into $T_xM$  has dimension $n-p+2$. Then there is an orthonormal basis $\{\partial_1,\ldots,\partial_{n-p},v,w\}$ for $P^\perp$ with $\partial_1,\ldots,\partial_{n-p+1}$ tangent to the sphere and 
\begin{align*}
v&=v_{n-p+2}\partial_{n-p+1}+v_{n+p+2}\partial_{n-p+2}+v_r\partial_r+v_t\partial_t,\\
w&=w_{n-p+2}\partial_{n-p+1}+w_{n+p+2}\partial_{n-p+2}+w_r\partial_r+v_t\partial_t,
\end{align*}
where $\partial_{n-p+1}$ and $\partial_{n-p+2}$ are orthogonal unit vectors in $T_xS^{n+1}$ orthogonal to the other $\partial_i$. Therefore the $(p,n+2)$-intermediate sectional curvature is given by

\begin{equation}\label{torpedo ISC crossed interval case 3}
\begin{split}
s_{p,n+2}(P) &= \sum_{i,j=1}^{n-p+1} K(\p_i,\p_j) + 2\sum_{i=1}^{n-p+1} K(\p_i,v)+2\sum_{i=1}^{n-p+1} K(\p_i,w) + 2K(v,w)\\
&=(n-p)(n-p-1){1-(\beta')^2\over\beta^2}\\
&\quad\null+{2(n-p)\over||v||^2}\left[(v_{n-p+1}^2+ v_{n-p+2}^2)(1-\beta_r^2)-v_r^2{\beta_{rr}\over\beta}\right]\\
&\quad\null+{2(n-p)\over||w||^2}\left[(w_{n-p+1}^2+ w_{n-p+2}^2)(1-\beta_r^2)-w_r^2{\beta_{rr}\over\beta}\right]\\
&\quad\null-{2\over||v||^2||w||^2}\biggl[ (v_{n-p+1}w_{n-p+2}-v_{n-p+2}w_{n-p+1})^2\beta^{2}(1-\beta_r^2)\\
&\quad\quad\quad\null-\big[(v_rw_{n-p+1}-v_{n-p+1}w_r)^2-(v_rw_{n-p+2}-v_{n-p+2}w_r)^2\big]\beta\beta_{rr}\biggr].
\end{split}
\end{equation}

In all three of Equations \ref{torpedo ISC crossed interval case 1}, \ref{torpedo ISC crossed interval case 2}, and \ref{torpedo ISC crossed interval case 3},  the  leading term is $$(n-p)(n-p-1){1-\beta_r^2\over\beta}.$$ Since $\beta>0$, $\beta_r^2\leq 1$, and $\beta_{rr}\leq 0$, all other terms are nonnegative. Using the same calculations from the proof of Proposition \ref{positivity of torpedo curvature}, we know that $K_{ij}>0$, and since $p\leq n-2$, this means that $s_{p,n+2}(P)>0$ in all three cases.

This leaves just the points where $r=0$. Again from the calculations in the proof of Proposition \ref{positivity of torpedo curvature}, we know that planes in the torpedo factor have a constant sectional curvature of $1\over\delta^2$. Since this is a standard Riemann product, this means the Riemann curvature is given by the standard formula \ref{standard riemann product formula}
\begin{equation*}
R(v,w,w,v)  =  R_B(\check{v}_{B}, \check{w}_{B},\check{w}_{B},\check{v}_{B})+ R_{F}(\hat{v}_{F}, \hat{w}_{F},\hat{w}_{F},\hat{v}_{F}).
\end{equation*}

Therefore, the $(p,n+2)$-intermediate scalar curvature of a $p$-plane $P$, with orthonormal basis $\{\partial_1,\ldots,\partial_{n-p+2}\}$ for $P^\perp$, is given by

\begin{equation}\label{torpedo ISC cross interval vertex}
\begin{split}
s_{p,n+2}(P) &= \sum_{i,j=1}^{n-p+2} |\partial_i^F\wedge\partial_j^F|_F^2 K(\partial_i^F,\partial_j^F)\\
&=\sum_{i,j=1}^{n-p+2} |\partial_i^F\wedge\partial_j^F|_F^2 {1\over\delta^2}.
\end{split}
\end{equation}
Every term in Equation \ref{torpedo ISC cross interval vertex}, and at least one term must be strictly positive since $p\leq n-2$ and so $P^\perp$ cannot lie entirely in the $t$-direction. Therefore $s_{p,n+2}(P)>0$, so that $g_{\tor}^{n+2}(\delta)_\lambda+dt^2$ has positive $(p,n+2)$-intermediate scalar curvature for $p\leq n-2$.
\end{proof}

\subsection{The Toe Metric}

In this section we discuss the construction of the toe metric $g_{\toe}^{n+2}(\delta)_{\lambda_1,\lambda_2}$. In Lemma 2.1 of \cite{walsh2} it is shown that the metric smoothing, necessary in constructing $g_{\toe}^{n+2}(\delta)_{\lambda_1, \lambda_2}$, can be done so as to preserve positive scalar curvature. In the following Lemma, we show that the construction described in \cite{walsh2} actually preserves positive $(p,n+2)$-curvature for $p\leq n-2$.

\begin{lemma}\label{toecurvature}
Suppose $n\geq 2$ and $0\leq p\leq n-2$. For any $\delta>0, \lambda_1$ and  $\lambda_2\geq 0$, the metric $g_{\toe}^{n+2}(\delta)_{\lambda_1, \lambda_2}$ has positive $(p,n+2)$-curvature. Moreover, this curvature can be bounded below with an arbitrarily large positive constant by choosing $\delta$ sufficiently small.
\end{lemma}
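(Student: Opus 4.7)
My plan is to combine the warped-product computation of Proposition \ref{pcurvatureofwarpedproduct} with the arbitrarily-large positivity established in Propositions \ref{positivity of torpedo curvature} and \ref{positivity of torpedo curvature crossed with an interval}, arguing that the leading term, which scales like $\delta^{-2}$, dominates the new negative contributions introduced by the corner smoothing, which are controlled independently of $\delta$.

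First, I would partition $D^{n+2}_{\mathrm{stretch}}$ into three regions: (a) the locus where the metric already equals the half-torpedo $g^{n+2}_{\tor+}(\delta)_{\lambda_1}$, (b) the locus where it already equals the cylinder $g^{n+1}_{\tor}(\delta)_{\lambda_1} + dt^2$, and (c) a compact \emph{transition region} covering the corner. On (a), the metric is a restriction of $g^{n+2}_{\tor}(\delta)_{\lambda}$, so Proposition \ref{positivity of torpedo curvature} gives positive $(p,n+2)$-ISC for $p \leq n-1$, hence certainly for $p \leq n-2$; on (b), Proposition \ref{positivity of torpedo curvature crossed with an interval} handles the range $p \leq n-2$ directly. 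So the problem reduces to region (c).

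In region (c), I would follow the smoothing scheme of Lemma 2.1 of \cite{walsh2} and write the metric as a warped product of the form
\[
g = dr^2 + \omega(r,t)^2 dt^2 + \beta(r)^2 ds_n^2,
\]
where $\beta = \eta_{\delta,\lambda_1}$ is the torpedo warping function on the $S^n$-fiber, and $\omega$ interpolates between $\omega \equiv 1$ (matching the product cylinder on one side) and the appropriate $\omega$-profile coming from the half-torpedo (on the other side). This puts us exactly in the setting of Proposition \ref{pcurvatureofwarpedproduct}, which yields
\[
s_{p,n+2}(P) = (n-p)(n-p-1)\frac{1-\beta_r^2}{\beta^2} + A^2(1-\beta_r^2) - B^2\,\omega\omega_{rr} - C^2\,\beta\beta_{rr} - D^2\,\beta\beta_r\,\omega\omega_r
\]
for bounded coefficient functions $A,B,C,D$ depending on $P$, $\beta$, and $\omega$. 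Because $p \leq n-2$, the prefactor $(n-p)(n-p-1) \geq 2$, so the leading term is strictly positive; the torpedo estimates in the proof of Proposition \ref{positivity of torpedo curvature} show that $(1-\beta_r^2)/\beta^2 \geq 1/\delta^2$, while $\beta\beta_{rr}$ and $\beta\beta_r$ remain bounded independently of $\delta$.

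The main obstacle, and where I would spend the most effort, is choosing $\omega$ so that the two genuinely negative contributions, namely $-B^2\omega\omega_{rr}$ and (when $\omega_r < 0$) $-D^2 \beta\beta_r\omega\omega_r$, do not exceed the positive leading term. The idea is that $\omega$ and its derivatives $\omega_r,\omega_{rr}$ depend only on the geometry of the corner smoothing in the base $(r,t)$-plane and can be arranged uniformly bounded, independent of $\delta$, by performing the corner-bending construction on a fixed scale (as in \cite{walsh2}). Consequently, the $\omega$-terms contribute at most an $O(1)$ negative quantity, while the leading term grows like $1/\delta^2$. Choosing $\delta$ sufficiently small then forces $s_{p,n+2}(P) > 0$ uniformly in $P$, and in fact makes the curvature exceed any prescribed positive constant, which is the quantitative statement claimed. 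A final continuity/compactness argument extends positivity from the open locus $r > 0$ to the smooth closure, handled exactly as in the proof of Proposition \ref{positivity of torpedo curvature} via the L'H\^opital-type limit at $r = 0$.
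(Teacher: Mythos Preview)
Your strategy proves positivity only for sufficiently small $\delta$, but the lemma asserts positivity for \emph{every} $\delta>0$. The ``Moreover'' clause is about small $\delta$, but the first sentence is not. Dominating the $\omega$-terms by the $\delta^{-2}$ leading term does not address the case of a fixed, possibly large $\delta$, so as written the argument leaves a gap in the main claim.

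The paper closes this gap by a sign analysis you have bypassed. In the construction of \cite{walsh2}, the warping function $\omega$ is built from the companion function $\alpha$ (satisfying $\alpha_r^2+\beta_r^2=1$) via a cutoff, so that $\omega_r$ is a nonnegative multiple of $\alpha_r$ and $\omega_{rr}$ is a nonnegative multiple of $\alpha_{rr}$. Since $\beta$ is a torpedo function one checks $\alpha_{rr}\le 0$, hence $\omega_{rr}\le 0$, and since $\alpha_r$ and $\beta_r$ have opposite signs when nonzero, $\omega_r\beta_r\le 0$. Thus the terms $-B^2\omega\omega_{rr}$ and $-D^2\beta\beta_r\omega\omega_r$ in Proposition~\ref{pcurvatureofwarpedproduct} are in fact \emph{nonnegative}, not merely bounded. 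Every term in the formula is then $\ge 0$ and the leading term is strictly positive, giving $s_{p,n+2}>0$ for all $\delta>0$ without any smallness assumption. Your claim that the $\omega$-derivatives can be arranged uniformly in $\delta$ is also suspect in this specific construction, since $\omega$ is built from $\alpha$, which depends on $\beta=\eta_{\delta}$; but once you use the sign argument this issue becomes moot.
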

\begin{proof}
The strategy of proof involves describing the metric as in Lemma 2.1 of \cite{walsh2} and then computing its $(p,n+2)$-curvature. Thus, we regard $g_{\toe}^{n+2}(\delta)_{\lambda_1, \lambda_2}$ as obtained by, firstly, tracing out a cylinder of torpedo metrics, $g_{\tor}^{n+1}(\delta)$, in one direction before, secondly, bending by an angle of $\frac{\pi}{2}$ to finish with another cylinder in an orthogonal direction. It is then easy to extend the ``rectangular" part of this metric which takes the form $dr^{2}+dt^{2}+\delta^{2}ds_{n}^{2}$ to obtain any desired pair of neck-lengths, $\lambda_1, \lambda_2>0$. 

We will not worry about the choices of $\lambda_1$ and $\lambda_2$ for now. Along the region where the bending has taken place, this metric takes the form
$$g_{\toe}^{n+2}(\delta)_{\lambda_1, \lambda_2}=dr^{2}+\omega(r,t)^{2}dt^{2}+\beta(r)^{2}ds_{n}^{2},$$
for certain smooth warping functions $\omega:[0,b]\times[-2,\frac{\pi}{2}+2]\to[1,\infty)$ and $\beta:[0, b]\rightarrow [0,\infty).$ 
The function $\beta$ is a torpedo function and so we can assume, for all $r\in[0,b]$, $\beta(r)=\eta_{\delta,\lambda}(r)$ as in section 5.2, for some appropriate neck-length $\lambda$. Note that, there is a corresponding real-valued function, $\alpha$, on $[0,b]$ which satisfies the condition (\ref{alpha0}) above. The other warping function, $\omega$, is now constructed to satisfy the following conditions:
\begingroup\singlespacing
$$\omega(r,t)=\begin{cases}1&t\in[-2,-1]\cr\mu(-t)+(1-\mu(-t))\alpha(r)&t\in[-1,0]
\cr\alpha(r)&t\in[0,\frac{\pi}{2}]\cr\mu(t-{\pi\over 2})+(1-\mu(t-{\pi\over 2}))\alpha(r)&t\in[{\pi\over 2},{\pi\over 2}+1]
\cr1&t\in[\frac{\pi}{2}+1,\frac{\pi}{2}+2]\end{cases}$$
\endgroup
where $\mu:[0,1]\rightarrow [0,1]$ is a cut-off function satisfying $\mu(t)=0$ when $t$ is near $0$, $\mu(t)=1$ when $t$ is near $1$ and $\mu'(t)\geq 0$ for all $t\in[0,1]$. 

Given the form of the metric, recall from Proposition \ref{pcurvatureofwarpedproduct}, the $(p, n+2)$-curvatures, $s_{p, n+2}(P)$, of this metric for a $p$-plane $P$ in the tangent space have the form.
\begin{equation*}
s_{p, n+2}(P)=(n-p)(n-p-1)\frac{1-\beta_r^2}{\beta^2}+A^2(1-\beta_r)^2-B^2\omega\omega_{rr}-C^2\beta\beta_{rr}-D^2\beta\beta_r\omega\omega_r
\end{equation*}
for some numbers $A$, $B$, $C$, and $D$ depending on the plane $P$.

Since $p\leq n-2$, then the coefficient on the first term is non-zero. Positivity of this term follows from the definition of $\beta$ as a $\delta$-torpedo function, applying L'H\^opital's rule at $r=0$. Indeed, as in Proposition \ref{positivity of torpedo curvature}, this term can be made arbitrarily large by choosing $\delta>0$ sufficiently small. Similarly, the second term is nonnegative. The remaining terms involve either $-\omega_r\beta_r$, $-\omega_{rr}$ or $-\beta_{rr}$. By construction, $\omega_{rr}$ and $\beta_{rr}$ are both non-positive. Hence, the corresponding terms above are also  non-negative. Finally, $\omega_r$ is a non-negative constant multiple of $\alpha_{r}$. Moreover, when $\alpha_r$ and $\beta_r$ are non-zero, they have opposite signs, so the remaining terms above are also non-negative. This completes the proof.
\end{proof}

\subsection{The Boot Metric}

We now consider, $g_{\boot}^{n+2}(\delta)_{\Lambda, \bar{l}}$, the boot metric introduced above. As our only concern here is establishing conditions for positivity of the $(p, n+2)$-curvature of such a metric, the values of the particular components of the vector $\bar{l}$ are unimportant and so we will suppress them from the notation, writing the metric simply as $g_{\boot}^{n+2}(\delta)_{\Lambda}$. This metric can be regarded as consisting of four pieces:

\begingroup\singlespacing\begin{equation}\label{bootcomponents}
g^{n+2}_{\boot}(\delta)_\Lambda :=
  \begin{cases}	g_{\toe}^{n+2}(\delta) & \text{on $R_1=D_{\stret}^{n+2}$,} \\
                               g_{\bend}^{n+2}(\delta)_\Lambda  & \text{on $R_2\cong D^{n+1}\times[0,1]$,} \\
                               g_{\tor}^{n+1}(\delta)+dt^2  & \text{on $R_3\cong D^{n+1}\times[0,1]$, and} \\
                               dr^2+dt^2+\delta^2ds_n^2  & \text{on $R_4\cong D^2\times S^n$.} \\
  \end{cases}
\end{equation}\endgroup
as depicted in Fig. \ref{BootDetail} below.

\begin{figure}[!htbp]
\vspace{4cm}
\hspace{-1cm}
\begin{picture}(0,0)
\includegraphics{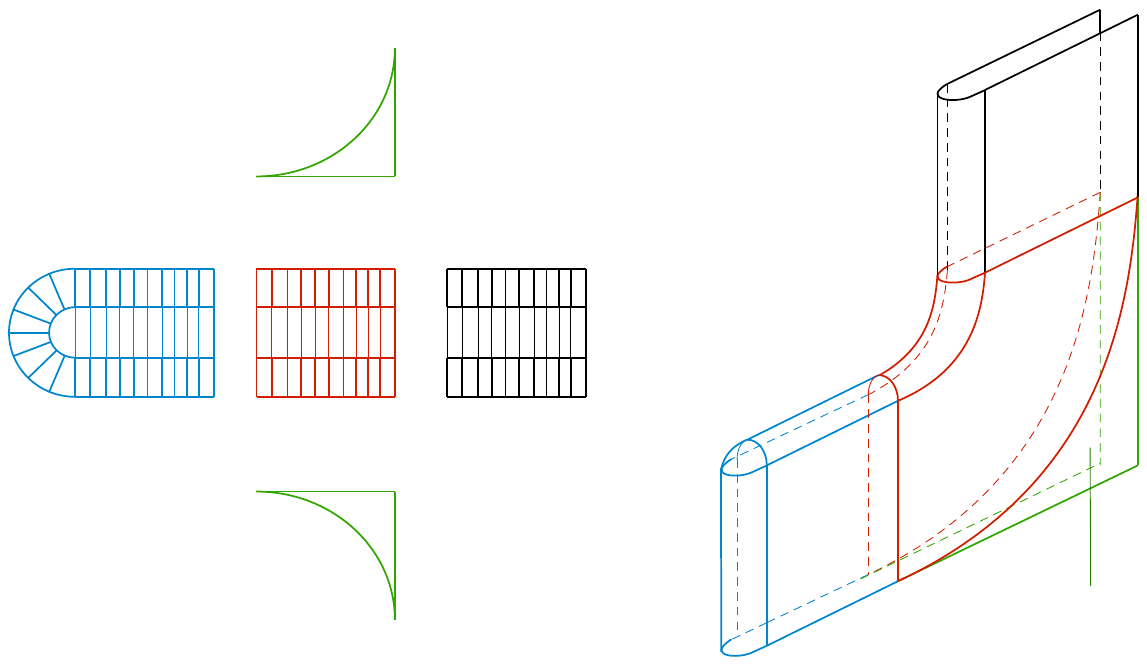}%
\end{picture}
\setlength{\unitlength}{3947sp}%
\begin{picture}(5079,1559)(1902,-7227)
\put(1900,-6150){\makebox(0,0)[lb]{\smash{{\SetFigFont{10}{8}{\rmdefault}{\mddefault}{\updefault}{\color[rgb]{0,0.6,0.8}$R_1=D_{\stret}^{n+2}$}%
}}}}
\put(3000,-5200){\makebox(0,0)[lb]{\smash{{\SetFigFont{10}{8}{\rmdefault}{\mddefault}{\updefault}{\color[rgb]{0.8,0,0}$R_2\cong D^{n+1}\times I$}%
}}}}
\put(4000,-6150){\makebox(0,0)[lb]{\smash{{\SetFigFont{10}{8}{\rmdefault}{\mddefault}{\updefault}{\color[rgb]{0,0,0}$R_3\cong D^{n+1}\times I$}%
}}}}
\put(2400,-6850){\makebox(0,0)[lb]{\smash{{\SetFigFont{10}{8}{\rmdefault}{\mddefault}{\updefault}{\color[rgb]{0,0.6,0}$R_4\cong D^{2}\times S^{n}$}
}}}}
\put(4650,-6850){\makebox(0,0)[lb]{\smash{{\SetFigFont{10}{8}{\rmdefault}{\mddefault}{\updefault}{\color[rgb]{0,0.6,0.8}$\hat{g}_{\tor}^{n+2}(\delta)$}
}}}}
\put(5350,-5500){\makebox(0,0)[lb]{\smash{{\SetFigFont{10}{8}{\rmdefault}{\mddefault}{\updefault}{\color[rgb]{0.8,0,0}$g_{\bend}^{n+2}(\delta)_{\Lambda}$}%
}}}}
\put(5500,-4300){\makebox(0,0)[lb]{\smash{{\SetFigFont{10}{8}{\rmdefault}{\mddefault}{\updefault}{\color[rgb]{0,0,0}$dt^{2}+g_{\tor}^{n+1}(\delta)$}%
}}}}
\put(6000,-7050){\makebox(0,0)[lb]{\smash{{\SetFigFont{10}{8}{\rmdefault}{\mddefault}{\updefault}{\color[rgb]{0,0.6,0}$dr^{2}+dt^{2}+\delta^{2}ds_{n}^{2}$}
}}}}
\end{picture}%
\caption{The various components of the boot metric, to the left are the ``assembly instructions" for the boot and to the right is the assembled boot.}
\label{BootDetail}
\end{figure}

While the first, second and fourth components of this metric above are clearly defined, the third component $g_{\bend}^{n+2}(\delta)_{\Lambda}$ requires some description. A detailed account of this construction is given in section 5 of \cite{walshNY} and so we will be brief. As mentioned above, the metric component, $g_{\bend}^{n+2}(\delta)_{\Lambda}$, is obtained by bending a cylinder of torpedo metrics $g_{\tor}^{n+1}(\delta)+dt^{2}$ around a quarter circle. Importantly, the bend is in the opposite direction to that employed in Lemma \ref{toecurvature}, and, unlike in that case creates negative curvature. Provided we perform the bending slowly enough, that is provided the quarter circle has sufficiently large radius, we can minimize such negative curvature. The parameter $\Lambda$  is the radius of this quarter circle. In section 5, page 892, of \cite{walshNY}, the metric $g_{\bend}^{n+2}(\delta)_{\Lambda}$ is defined as follows:  
\begin{equation*}
{g_{\bend}^{n+2}}(\delta)_{\Lambda}:=dr^{2}+\omega_{\Lambda}(r,t)^{2}dt^{2}+\beta(r)^{2}ds_{n}^{2},
\end{equation*}
where $r\in [0,b]$, $t\in [-3, \frac{\pi}{2}+3]$. Here  $\beta:[0,b]\rightarrow[0,\infty)$ is defined as:
$$ \beta(r)=\eta_{\delta}(b-r),$$
with respect to a torpedo function $\eta_{\delta}$ as defined in Equation \ref{torpcond}, where we suppress the neck length $\lambda$ as it is unimportant. The corresponding smooth function $\alpha$, is constructed from $\beta$ to satisfy $\alpha_r^{2}+\beta_r^{2}=1$, as defined in Equation \ref{alpha0}. The function $$\omega_{\Lambda}:[0,b]\times\left[-3, \frac{\pi}{2}+3\right]\longrightarrow [1, \infty)$$ is now defined to satisfy the following properties:
\begin{enumerate}
\item[(i)] \begingroup\singlespacing$$
\omega_\Lambda(r,t) =
  \begin{cases}		1 & \text{if $-3\leq t\leq -2$,} \\
                                \Lambda  & \text{if $-\frac{3}{2}\leq t\leq -\frac{1}{2}$,} \\
                                 \Lambda+\alpha(r) & \text{if $t\in[0,\frac{\pi}{2}]$,} \\
   \Lambda & \text{if $\frac{\pi}{2}+\frac{1}{2} \leq t\leq \frac{\pi}{2}+\frac{3}{2}$,}\\
   1 & \text{if $\frac{\pi}{2}+2\leq t\leq \frac{\pi}{2}+3$,} 
  \end{cases}
$$\endgroup
\item[(ii)] $\omega_{\Lambda}(r,t) \geq \Lambda-\max\{|\alpha(r)|:r\in[0,b]\}$ when $t\in[-\frac{3}{2}, \frac{\pi}{2}+\frac{3}{2}]$,
\item[(iii)] $\displaystyle\frac{\partial{w_{\Lambda}}}{\partial r}(r,t)=0$  when $t\in[-3,-\frac{1}{2}]\cup [\frac{\pi}{2}+\frac{1}{2}, \frac{\pi}{2}+3]$,

\item[(iv)] $\displaystyle\left|\frac{\partial^{(k)}{w_{\Lambda}}}{\partial r^{(k)}}(r,t)\right|\leq \left| \frac{\partial^{(k)}{\alpha}}{\partial r^{(k)}}(r)\right|$ for all $k\in\{0,1,2,\cdots\}$, and

\item[(v)] $\displaystyle\frac{\partial^{2}{w_{\Lambda}}}{\partial r^{2}}\leq 0$.
\end{enumerate}

\noindent To aid the reader, we provide in Figure \ref{hardbendadjust} a schematic description of this function on its rectangular domain $[0,b]\times[-3, \frac{\pi}{2}+3]$. The white regions in this picture indicate the smooth transition by way of a cut-off function in exactly the spirit of the cut-off function $\mu$ used when defining the warping function $\omega$ in the proof of Lemma \ref{toecurvature}.
\begin{figure}[!htbp]
\vspace{-1.5cm}
\hspace{-8cm}
\begin{picture}(0,0)%
\includegraphics{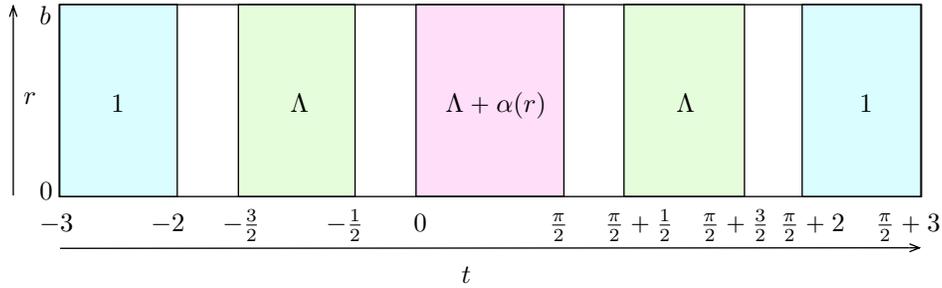}%
\end{picture}%
\setlength{\unitlength}{3947sp}%
\begin{picture}(2079,2559)(1902,-5227)
\put(2600,-4200){\makebox(0,0)[lb]{\smash{{\SetFigFont{10}{8}{\rmdefault}{\mddefault}{\updefault}{\color[rgb]{0,0,0}$1$}%
}}}}

\put(3725,-4200){\makebox(0,0)[lb]{\smash{{\SetFigFont{10}{8}{\rmdefault}{\mddefault}{\updefault}{\color[rgb]{0,0,0}$\Lambda$}%
}}}}
\put(4700,-4200){\makebox(0,0)[lb]{\smash{{\SetFigFont{10}{8}{\rmdefault}{\mddefault}{\updefault}{\color[rgb]{0,0,0}$\Lambda+\alpha(r)$}%
}}}}
\put(6150,-4200){\makebox(0,0)[lb]{\smash{{\SetFigFont{10}{8}{\rmdefault}{\mddefault}{\updefault}{\color[rgb]{0,0,0}$\Lambda$}%
}}}}
\put(7300,-4200){\makebox(0,0)[lb]{\smash{{\SetFigFont{10}{8}{\rmdefault}{\mddefault}{\updefault}{\color[rgb]{0,0,0}$1$}%
}}}}
\put(2150,-4950){\makebox(0,0)[lb]{\smash{{\SetFigFont{10}{8}{\rmdefault}{\mddefault}{\updefault}{\color[rgb]{0,0,0}$-3$}%
}}}}
\put(2850,-4950){\makebox(0,0)[lb]{\smash{{\SetFigFont{10}{8}{\rmdefault}{\mddefault}{\updefault}{\color[rgb]{0,0,0}$-2$}%
}}}}
\put(3300,-4950){\makebox(0,0)[lb]{\smash{{\SetFigFont{10}{8}{\rmdefault}{\mddefault}{\updefault}{\color[rgb]{0,0,0}$-\frac{3}{2}$}%
}}}}
\put(3950,-4950){\makebox(0,0)[lb]{\smash{{\SetFigFont{10}{8}{\rmdefault}{\mddefault}{\updefault}{\color[rgb]{0,0,0}$-\frac{1}{2}$}%
}}}}
\put(4500,-4950){\makebox(0,0)[lb]{\smash{{\SetFigFont{10}{8}{\rmdefault}{\mddefault}{\updefault}{\color[rgb]{0,0,0}$0$}%
}}}}
\put(5350,-4950){\makebox(0,0)[lb]{\smash{{\SetFigFont{10}{8}{\rmdefault}{\mddefault}{\updefault}{\color[rgb]{0,0,0}$\frac{\pi}{2}$}%
}}}}
\put(5700,-4950){\makebox(0,0)[lb]{\smash{{\SetFigFont{10}{8}{\rmdefault}{\mddefault}{\updefault}{\color[rgb]{0,0,0}$\frac{\pi}{2}+\frac{1}{2}$}%
}}}}
\put(6300,-4950){\makebox(0,0)[lb]{\smash{{\SetFigFont{10}{8}{\rmdefault}{\mddefault}{\updefault}{\color[rgb]{0,0,0}$\frac{\pi}{2}+\frac{3}{2}$}%
}}}}
\put(6800,-4950){\makebox(0,0)[lb]{\smash{{\SetFigFont{10}{8}{\rmdefault}{\mddefault}{\updefault}{\color[rgb]{0,0,0}$\frac{\pi}{2}+2$}%
}}}}
\put(7400,-4950){\makebox(0,0)[lb]{\smash{{\SetFigFont{10}{8}{\rmdefault}{\mddefault}{\updefault}{\color[rgb]{0,0,0}$\frac{\pi}{2}+3$}%
}}}}
\put(4800,-5270){\makebox(0,0)[lb]{\smash{{\SetFigFont{10}{8}{\rmdefault}{\mddefault}{\updefault}{\color[rgb]{0,0,0}$t$}%
}}}}
\put(2150,-4750){\makebox(0,0)[lb]{\smash{{\SetFigFont{10}{8}{\rmdefault}{\mddefault}{\updefault}{\color[rgb]{0,0,0}$0$}%
}}}}
\put(2150,-3650){\makebox(0,0)[lb]{\smash{{\SetFigFont{10}{8}{\rmdefault}{\mddefault}{\updefault}{\color[rgb]{0,0,0}$b$}%
}}}}
\put(2050,-4150){\makebox(0,0)[lb]{\smash{{\SetFigFont{10}{8}{\rmdefault}{\mddefault}{\updefault}{\color[rgb]{0,0,0}$r$}%
}}}}
\end{picture}%
\caption{The function $\omega_{\Lambda}$} 
\label{hardbendadjust}
\end{figure} 

\begin{lemma}\label{bendy} For $n\geq 2$ and any $\delta>0$ there is a positive constant $\Lambda$ for which the metric ${g_{\bend}^{n+2}}(\delta)_{\Lambda}$ has positive $(p, n+2)$-intermediate scalar curvature for all $p\in\{0, \cdots, n-2\}$.
\end{lemma}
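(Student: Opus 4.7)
The plan is to apply Proposition \ref{pcurvatureofwarpedproduct} to the bend metric ${g_{\bend}^{n+2}}(\delta)_{\Lambda}=dr^{2}+\omega_{\Lambda}(r,t)^{2}dt^{2}+\beta(r)^{2}ds_{n}^{2}$, viewed as a warped product with $S^n$ fibers over the rectangular base $(0,b)\times(-3,\tfrac{\pi}{2}+3)$ with base metric $dr^{2}+\omega_{\Lambda}^{2}dt^{2}$. For any $p$-plane $P$ in $T_{x}M$ with $0\le p\le n-2$, this yields an expression of the form
\begin{equation*}
s_{p,n+2}(P)=(n-p)(n-p-1)\frac{1-\beta_r^2}{\beta^2}+A^{2}(1-\beta_r^{2})-B^{2}\omega\omega_{rr}-C^{2}\beta\beta_{rr}-D^{2}\beta\beta_{r}\omega\omega_{r}
\end{equation*}
for certain real coefficients $A,B,C,D$ depending on $P$.

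First, since $p\le n-2$, the leading coefficient $(n-p)(n-p-1)\ge 2$ is strictly positive, and the torpedo structure $\beta(r)=\eta_{\delta}(b-r)$ supplies (via L'H\^opital at the apex $r=b$, exactly as in the proof of Proposition \ref{positivity of torpedo curvature}) a uniform lower bound $(1-\beta_{r}^{2})/\beta^{2}\ge c_{0}(\delta)>0$ on all of $[0,b]$, independent of $\Lambda$. Thus the first term is bounded below by $2c_{0}$. The terms $A^{2}(1-\beta_{r}^{2})\ge 0$, $-B^{2}\omega\omega_{rr}\ge 0$ (by property (v) of $\omega_{\Lambda}$), and $-C^{2}\beta\beta_{rr}\ge 0$ (by the torpedo property $\beta_{rr}\le 0$) are manifestly non-negative. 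Only $-D^{2}\beta\beta_{r}\omega\omega_{r}$ can be negative: here, in contrast to the toe situation of Lemma \ref{toecurvature}, the orientation reversal in the definition $\beta(r)=\eta_{\delta}(b-r)$ makes $\beta_{r}\le 0$, while $\omega_{r}$ is a non-positive multiple of $\alpha_{r}$ on the active bending region $t\in(-\tfrac12,\tfrac{\pi}{2}+\tfrac12)$ (and vanishes elsewhere by property (iii)), so their product is non-negative and the term is indeed non-positive.

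The main obstacle is bounding $|D^{2}\beta\beta_{r}\omega\omega_{r}|$ uniformly in $\Lambda$ by a quantity tending to zero. My plan is to return to Cases 2 and 3 in the proof of Proposition \ref{pcurvatureofwarpedproduct} and trace the origin of $D^{2}$: for orthonormal $v,w\in P^{\perp}$ expanded in the coordinate basis, the relevant contributions are shape $(v_{k}w_{t}-w_{k}v_{t})^{2}/(\|v\|^{2}\|w\|^{2})$ and $v_{t}^{2}$, which are bounded by $C/(\beta^{2}\omega^{2})$ and $C/\omega^{2}$ respectively (a direct consequence of $\|v\|=\|w\|=1$ forcing $\beta|v_{k}|\le 1$ and $\omega|v_{t}|\le 1$). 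After this bookkeeping, the problematic term collapses to something of order $|\beta_{r}\omega_{r}|/(\beta\omega)$. Using the identity $\alpha_{r}^{2}+\beta_{r}^{2}=1$ together with the explicit sinusoidal form of $\beta$ near the cap (setting $u=(b-r)/\delta$ so $\beta=\delta\sin u$, $\beta_{r}=-\cos u$, $\alpha_{r}=-\sin u$), one computes $\beta_{r}\alpha_{r}/\beta=\cos u/\delta\le 1/\delta$, which remains bounded as $r\to b$ even though $\beta_{r}/\beta$ alone blows up. Combining this cancellation with property (iv) ($|\omega_{r}|\le|\alpha_{r}|$) and property (ii) ($\omega\ge \Lambda-\max|\alpha|$) then yields $|D^{2}\beta\beta_{r}\omega\omega_{r}|\le C_{1}(\delta)/\Lambda$ uniformly in $(r,t)$ and $P$.

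Finally, I would choose $\Lambda$ large enough that $C_{1}(\delta)/\Lambda<2c_{0}(\delta)$, so that $s_{p,n+2}(P)>0$ for every $p$-plane $P$; the same $\Lambda$ works uniformly in $p\in\{0,\dots,n-2\}$ thanks to the coefficient bound $(n-p)(n-p-1)\ge 2$. The delicate point throughout is the cap $r=b$ where both $\beta$ and $\beta_{r}/\beta$ are singular; the sinusoidal identity $\beta_{r}\alpha_{r}/\beta=\cos u/\delta$ is precisely the cancellation that rescues the estimate there and, combined with the $\Lambda^{-1}$ decay supplied by property (ii), makes the whole argument go through.
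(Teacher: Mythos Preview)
Your argument is correct and follows the same strategy as the paper: apply Proposition \ref{pcurvatureofwarpedproduct}, observe that all terms except $-D^{2}\beta\beta_{r}\omega\omega_{r}$ are non-negative with the leading one strictly positive, and then kill the bad term by choosing $\Lambda$ large. You are in fact more careful than the paper at the one delicate point: the paper simply asserts that the problem term has denominator bounded below by $\Lambda-\max|\alpha|$ and numerator bounded by $\max|\alpha_{r}\beta_{r}|$, appealing to continuity over the compact Grassmannian, without explicitly tracking the factor of $\beta^{-1}$ hidden inside $D^{2}$; your observation that $|\alpha_{r}\beta_{r}|/\beta=|\cos u|/\delta$ stays bounded through the cap $r=b$ is exactly the cancellation needed to make that step rigorous. (One trivial slip: you mean $\omega_{r}$ is a \emph{non-negative} multiple of $\alpha_{r}$, not non-positive; since $\alpha_{r}\le 0$ this still gives $\omega_{r}\le 0$ and your conclusion that the term is non-positive is unaffected.)
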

\begin{proof}
We make use of Proposition \ref{pcurvatureofwarpedproduct} for the intermediate scalar curvatures $s_{p, n+2}(P)$ for some $p$-dimensional subspace of the tangent space $P$. Recall that 
\begin{equation*}
s_{p, n+2}(P)=(n-p)(n-p-1)\frac{1-\beta_r^2}{\beta^2}+A^2(1-\beta_r)^2-B^2\omega\omega_{rr}-C^2\beta\beta_{rr}-D^2\beta\beta_r\omega\omega_r
\end{equation*}
for some numbers $A$, $B$, $C$, and $D$ dependent on the plane $P$.

As with the proof of Lemma \ref{toecurvature}, the condition on $p$ and the fact that $\beta$ is a torpedo function, albeit pointing in the opposite direction, means that the first term is positive and the second term is nonnegative. As $\beta_{rr}\leq 0$ and $\omega_{rr}\leq 0$, the third and fourth terms are also nonnegative.

Unlike the case of Lemma \ref{toecurvature} however, we do not get such a nice relationship between the signs of the first derivatives of $\beta$ and $\alpha$. This is because $\beta_{r}\leq 0$ in this case. Thus, there may be some negativity arising in the fifth term. This negativity arises only in the region where $t\in[-\frac{1}{2},\frac{\pi}{2}+\frac{1}{2}]$, since $\frac{\p \omega_{\Lambda}}{\p r}=0$ off this region. Moreover, the denominator of each of these problem terms is bounded below by $$\Lambda-\max\{|\alpha(r)|:r\in[0,b]\}$$ on this region, while the absolute value of the numerator is bounded by the maximum of $|\alpha_{r}\beta_{r}|$. We emphasize that the functions $\alpha$,  $\beta$, and their derivatives, all of which are bounded, are fixed and independent of $\Lambda$. Since the factor of $D$ in the fifth term depends continuously on the choice of plane $P$ which varies over the compact Grassmannian, $\Grass_p(TR_2)$, there is some choice of $\Lambda$ sufficiently large to minimize the negative impact of these terms and ensure overall positivity of $s_{p,n+2}(P)$.
\end{proof}

\begin{corollary}\label{bootlambda}
For $n\geq 2$ and any $\delta>0$ there is a positive constant, $\Lambda$, for which the boot metric ${g_{\boot}^{n+2}}(\delta)_{\Lambda}$ has positive $(p, n+2)$-intermediate scalar curvature for all $p\in\{0, \cdots, n-2\}$.
\end{corollary}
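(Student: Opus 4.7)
The plan is to invoke the decomposition \eqref{bootcomponents} of the boot metric into the four regions $R_1, R_2, R_3, R_4$ and to verify positivity of $(p,n+2)$-intermediate scalar curvature piece-by-piece using the preceding lemmas. First I would apply Lemma~\ref{bendy} to select a bending parameter $\Lambda>0$ large enough that $g_{\bend}^{n+2}(\delta)_\Lambda$ has positive $(p,n+2)$-intermediate scalar curvature on $R_2$ for all $p\in\{0,\ldots,n-2\}$. This choice of $\Lambda$ is the only non-canonical ingredient; once it is fixed, the remaining work is simply checking that it is compatible with the geometry on each of the other three regions.

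On $R_1$, the metric is $g_{\toe}^{n+2}(\delta)_{\lambda_1,\lambda_2}$ for some neck-lengths $\lambda_1,\lambda_2$ determined by $\Lambda$ and $\bar l$, so positivity follows directly from Lemma~\ref{toecurvature}. On $R_3$, the metric is $g_{\tor}^{n+1}(\delta)+dt^2$, and Proposition~\ref{positivity of torpedo curvature crossed with an interval} gives positive $(p,n+2)$-intermediate scalar curvature precisely in the range $0\leq p\leq n-2$. The one region that needs a direct (if short) calculation is $R_4$, where the metric is the flat/round Riemannian product $dr^2+dt^2+\delta^2 ds_n^2$ on $D^2\times S^n$. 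Here, for any $p$-plane $P\subset T_xM$, Grassmann's identity yields
\[
\dim(P^\perp\cap T_xS^n)\;\geq\;(n-p+2)+n-(n+2)\;=\;n-p,
\]
so we may pick $n-p$ orthonormal vectors $e_1,\ldots,e_{n-p}\in P^\perp$ tangent to the sphere factor. Since the $\partial_r,\partial_t$ directions are flat and independent of the sphere, all sectional curvatures $K(e_i,\cdot)$ involving any vector with a component along $\partial_r$ or $\partial_t$ equal $0$ (by the standard product formula \eqref{standard riemann product formula}), while $K(e_i,e_j)=1/\delta^2$ for $i\neq j$. Hence
\[
s_{p,n+2}(P)\;\geq\;\sum_{i\neq j,\,i,j\leq n-p}K(e_i,e_j)\;=\;\frac{(n-p)(n-p-1)}{\delta^2}\;>\;0,
\]
using $n-p\geq 2$ which is guaranteed by $p\leq n-2$.

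The smoothness of the composite metric across the boundaries of the four regions is built into the construction: the torpedo function $\eta_{\delta,\lambda}$, the warping function $\omega_\Lambda$, and the cut-offs used to blend them are all designed so that at each interface the pieces agree to infinite order with the corresponding product/cylinder form used in the neighboring region. I expect the main (and only significant) obstacle to be in the bookkeeping for $R_2$: verifying that the single choice of $\Lambda$ provided by Lemma~\ref{bendy}, which was stated for a fixed $\delta$ but without reference to the global assembly, remains admissible after the attachments on both sides are imposed. This is handled by noting that the boundary conditions for $\omega_\Lambda$ on $r$-ends and $t$-ends match the adjacent standard forms identically, so no further adjustment of $\Lambda$ is needed once it has been chosen to control the bend's negative contributions uniformly over the compact Grassmannian $\Grass_p(TR_2)$.
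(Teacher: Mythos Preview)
Your proposal is correct and follows essentially the same approach as the paper: decompose the boot into the four regions $R_1,\dots,R_4$ and invoke Lemma~\ref{toecurvature}, Lemma~\ref{bendy}, and Proposition~\ref{positivity of torpedo curvature crossed with an interval} on $R_1$, $R_2$, $R_3$ respectively, with a short direct check on the flat-times-round region $R_4$. The only cosmetic difference is that on $R_4$ the paper appeals to Proposition~\ref{pcurvatureofwarpedproduct} with $\omega=1$, $\beta=\delta$, whereas you use the Grassmann dimension count and the product curvature formula directly; both yield the same lower bound $(n-p)(n-p-1)/\delta^2>0$.
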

\begin{proof}
Here we simply utilize the the description in Equation \ref{bootcomponents}, of ${g_{\boot}^{n+2}}(\delta)_{\Lambda}$ and its four component metrics. On $R_1$, the metric $g_{\toe}^{n+2}(\delta)$ has positive $(p, n+2)$-intermediate scalar curvature by Lemma \ref{toecurvature}. On $R_2$, the metric $g_{\bend}^{n+2}(\delta)_\Lambda$ has positive $(p,n+2)$-intermediate scalar curvature by Lemma \ref{bendy}. On $R_3$, the metric $g_{\tor}^{n+1}(\delta)+dt^2$ has positive $(p,n+2)$-intermediate scalar curvature by Proposition \ref{positivity of torpedo curvature crossed with an interval}.

Finally, on $R_4$, we have the metric $dr^2+dt^2+\delta^2ds_n^2$. By  Proposition \ref{pcurvatureofwarpedproduct}, using $\omega=1$ and $\beta=\delta$, we have for any $p$-plane,
$$s_{p,n+2}(P)=(n-p)(n-p-1)+A^2$$
for some function $A$ of the plane $P$. Since $n-p\geq 2$, this is positive for all planes $P$ so we have positive $(p,n+2)$-intermediate scalar curvature on $R_3$.
\end{proof}

\subsection{The Product of the Boot with a Sphere}

The following proposition generalizes the result of Corollary \ref{bootlambda} to the product of a boot metric and a round sphere:

\begin{proposition}
For $n\geq 2$, $m\geq 0$ and any $\delta>0$, there is a positive constant $\Lambda$ for which the product metric $g_{\boot}^{n+2}(\delta)_{\Lambda}+ds_m^2$ has positive $(p,n+m+2)$-intermediate scalar curvature for all $p\in\{0,\ldots,n-2\}$.
\end{proposition}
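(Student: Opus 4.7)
The plan is to mimic the proof of Corollary \ref{bootlambda}, extending its four region-analyses to incorporate the extra $S^m$ factor. I take $\Lambda$ at least as large as in Corollary \ref{bootlambda}, enlarging if necessary. At each point $x = (y,z) \in R_i \times S^m$, the tangent space splits as $T_xM = T \oplus S$ with $\dim T = n+2$, $\dim S = m$. For any $p$-plane $P \subset T_xM$ with $p \leq n-2$, a dimension count gives $\dim(P^\perp \cap T) \geq n+2-p \geq 4$, so $P^\perp$ contains a substantial subspace of $T$.

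Using the product curvature identity $R(v,w,w,v) = R_T(\pi_T v,\pi_T w,\pi_T w,\pi_T v) + R_S(\pi_S v,\pi_S w,\pi_S w,\pi_S v)$ with $R_{S^m}(a,b,b,a) = |a \wedge b|^2$, and choosing an orthonormal basis $\{f_i\}$ of $P^\perp$ adapted via the singular value decomposition of $\pi_T|_{P^\perp}$ (so that $\{\pi_T f_i\}$ and $\{\pi_S f_i\}$ are each mutually orthogonal), I would write
\[
s_{p,n+m+2}(P) \;=\; \sum_{i \neq j} \sigma_i^2 \sigma_j^2 \, K_{\boot}(\hat u_i,\hat u_j) \;+\; \sum_{i \neq j} \beta_i^2 \beta_j^2,
\]
where $\sigma_i = |\pi_T f_i|$ and $\beta_i = |\pi_S f_i|$ satisfy $\sigma_i^2 + \beta_i^2 = 1$, and $\hat u_i, \hat v_i$ are the normalized projections. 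The sphere sum is manifestly non-negative. The ``pure-$T$'' part of the boot sum, namely the contribution from the $r \geq 4$ indices with $\sigma_i = 1$, equals the $(n+2-r,\,n+2)$-intermediate scalar curvature of the boot on the plane $U^\perp \subset T$ spanned by those vectors; this is strictly positive by Corollary \ref{bootlambda} since $n+2-r \leq p \leq n-2$, and is in fact bounded below by a positive constant depending only on $\delta, n, p$ by compactness of $(\boot, g_{\boot})$ and of the Grassmannian.

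On $R_1$, $R_3$, and $R_4$, the boot metric has non-negative sectional curvature (from the proofs of Lemma \ref{toecurvature}, Proposition \ref{positivity of torpedo curvature crossed with an interval}, and a direct computation on the flat-plus-round piece $R_4$), so every cross and mixed-mixed term of the boot sum is $\geq 0$, and the pure-$T$ contribution alone establishes positivity. The main obstacle is $R_2$: in the bend region, sectional curvatures $K_{rt} = -\omega_{rr}/\omega$ and $K_{ti} = -\omega_r\beta_r/(\omega\beta)$ may be negative of size $O(\Lambda^{-1})$, and the coefficients $\sigma_i^2 \sigma_j^2$ of the mixed boot-sum terms can be arbitrarily small, so the sphere sum does not compensate pointwise. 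The resolution mirrors the proof of Lemma \ref{bendy}: the total negative contribution from mixed boot-sum terms is uniformly bounded by $O(\Lambda^{-1})$ times a quantity depending only on $n, m, p$ (by compactness of the Grassmannian bundle over $R_2 \times S^m$), while the positive pure-$T$ contribution has a fixed positive lower bound independent of $\Lambda$. Choosing $\Lambda$ large enough therefore forces $s_{p,n+m+2}(P) > 0$ on $R_2 \times S^m$ as well, completing the proof.
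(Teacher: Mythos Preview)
Your proposal is correct and follows essentially the same strategy as the paper's proof: the same dimension count giving $\dim(P^\perp\cap T)\geq n+2-p$, the identification of the pure-$T$ sum with an intermediate scalar curvature of the boot (positive by Corollary~\ref{bootlambda}), the observation that $R_1,R_3,R_4$ have nonnegative sectional curvature so the remaining terms are harmless there, and on $R_2$ the same $O(\Lambda^{-1})$ control of the negative contributions against the fixed positive core term $(n-q)(n-q-1)\frac{1-\beta_r^2}{\beta^2}$. The only differences are cosmetic: you use an SVD-adapted basis of $P^\perp$ where the paper uses an ad hoc one, and you phrase the $R_2$ estimate abstractly where the paper expands via Equation~\ref{fullriemformainmetric}. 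One small inaccuracy: on the bend $\omega_{rr}\leq 0$ by construction, so $K_{rt}=-\omega_{rr}/\omega\geq 0$; only $K_{ti}$ contributes negatively, but this does not affect your argument.
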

\begin{proof}
Let $M$ be the product $B\times S^m$ where $B$ is a disk with a boot metric $g_{\boot}^{n+2}(\delta)_{\Lambda}$ and $S^k$ is the round sphere for some $k\geq 0$ with the standard product metric
$$g_{\boot}^{n+2}(\delta)_{\Lambda}+ds_m^2$$

Let $p\in\{0,\ldots,n-2\}$ so that the boot $(B,g_{\boot}^{n+2}(\delta)_{\Lambda})$ can have positive $(p,n+2)$-intermediate scalar curvature for large enough $\Lambda$. If $x\in M$ and $P$ is a $p$-plane in $T_xM$, then the orthogonal complement $P^\perp$ has dimension $n+m+2-p$. Since the direct sum $P^\perp+T_xB$ is at most the entire tangent space $T_xM$, we have
\begin{equation*}
\begin{split}
\dim(P^\perp\cap T_xB)&=\dim(P^\perp)+\dim(T_xB)-\dim(P^\perp+T_xB)\\
&\geq (n+m+2-p)+(n+2)-(n+m+2)\\
&= n+2-p.
\end{split}
\end{equation*}

Therefore, we can take an orthonormal basis for $P^\perp$ consisting of vectors $e_1,\ldots,e_{n+2-p}$ from $T_xB$ and $v_1,\ldots,v_m$ other vectors. Therefore the $(p,n+m+2)$-intermediate scalar curvature of $P$ is
\begin{equation}\label{pcurvatureofbootsphereproduct}
\begin{split}
s_{p,n+m+2}(P) &= \sum_{i,j} K(e_i,e_j) + 2\sum_{i,j} K(e_i,v_j) + \sum_{i,j} K(v_i,v_j).
\end{split}
\end{equation}

If we denote the sectional curvatures of the boot and sphere by $K_B$ and $K_S$ respectively, and we have orthonormal vectors $v=v^B+v^S$ and $w=w^B+w^S$ in $T_xM$ where $v^B,w^B$ are tangent to the boot and $v^S,w^S$ are tangent to the sphere, then recall the formula,
$$K(v,w)=\left|v^B\wedge w^B\right|^2K_B(v^B,w^B)+\left|v^S\wedge w^S\right|^2K_S(v^S,w^S).$$

For the first summand of Equation \ref{pcurvatureofbootsphereproduct}, since all of the $\p_i$ are entirely in $T_xB$ and are still orthonormal there,
$$\sum_{i,j} K(e_i,e_j)=\sum_{i,j} K_B(e_i,e_j) = s_{n+2,p}(Q),$$
where $Q$ is the $p$-plane in $T_xB$ that is the orthogonal complement of the span of the $n+2-p$ vectors $e_i$. As a $(p,n+2)$-curvature of the boot, so long as we choose a large enough $\Lambda$, this summand is strictly positive.

Moving on to the second summand of Equation \ref{pcurvatureofbootsphereproduct}, since $v_j$ is orthogonal to $e_i$, which is already contained in $T_xB$, the projection $v_j^B$ remains orthogonal to $e_i$. Therefore,
$$K(e_i,v_j)=\left|e_i\wedge v_j^B\right|^2K_B(e_i,v_j^B)=\|v_j^B\|^2K_B(e_i,v_j^B).$$

For the third summand, the only immediate simplification we can make is to use the fact that the sphere has constant curvature of $1$,
\begin{equation*}
\begin{split}
K(v_i,v_j) &= \left|v_i^B\wedge v_j^B\right|^2K_B(v_i^B,v_j^B)+\left|v_i^S\wedge v_j^S\right|^2K_S(v_i^S,v_j^S)\\
&= \left|v_i^B\wedge v_j^B\right|^2K_B(v_i^B,v_j^B)+\left|v_i^S\wedge v_j^S\right|^2.
\end{split}
\end{equation*}
The second term of this only adds positivity, so we focus on the first. That is, we need to deal with sectional curvatures of the boot $K_B(v,w)$. There are four cases depending on where the projection of the point $x$ is in Figure \ref{BootDetail}. However by construction, $R_1$, $R_3$, and $R_4$ have nonnegative sectional curvature, so when the projection of $x$ is in any of these regions, we have that the first summand of Equation \ref{pcurvatureofbootsphereproduct} is strictly positive, while the other two are nonnegative. Therefore $s_{p,n+m+2}(P)>0$ in these cases. All that remains is to consider the case when $x$ projects into the $R_2$ region.

Since the metric on $R_2$ is given by
$${g_{\bend}^{n+2}}(\delta)_{\Lambda}=dr^{2}+\omega_{\Lambda}(r,t)^{2}dt^{2}+\beta(r)^{2}ds_{n}^{2},$$
then we have from Equation \ref{fullriemformainmetric}, with $\omega=\omega_\Lambda$,
\begin{equation*}
\begin{split}
\sum_{i,j} K_B(e_i,v^B_j) &= \sum_{i,j} {1\over\|v_j^B\|^2}\biggl[-\omega\omega_{rr}(e_{it}v_{jr}-e_{ir}v_{jt})^2+\sum_{k<\ell}(e_{ik}v_{j\ell}-e_{i\ell}v_{jk})^2\beta^2(1-\beta_r^{2})\\
&-\sum_{k} (e_{ik}v_{jr}-e_{ir}v_{jk})^2\beta\beta_{rr}-\sum_{k}(e_{ik}v_{jt}-e_{it}v_{jk})^2\beta\beta_r\omega\omega_r\biggr],\\
\end{split}
\end{equation*}
and
\begin{equation*}
\begin{split}
\sum_{i,j} K_B(v^B_i,v^B_j) &= \sum_{i,j} {1\over|v_i^B\wedge v_j^B|^2}\biggl[-\omega\omega_{rr}(v_{it}v_{jr}-v_{ir}v_{jt})^2+\sum_{k<\ell}(v_{ik}v_{j\ell}-v_{i\ell}v_{jk})^2\beta^2(1-\beta_r^{2})\\
&-\sum_{k} (v_{ik}v_{jr}-v_{ir}v_{jk})^2\beta\beta_{rr}-\sum_{k}(v_{ik}v_{jt}-v_{it}v_{jk})^2\beta\beta_r\omega\omega_r\biggr].\\
\end{split}
\end{equation*}
Incorporating these two into Equation \ref{pcurvatureofbootsphereproduct}, we have
\begin{equation*}
\begin{split}
s_{p,n+m+2}(P) &= \sum_{i,j} K_B(e_i,e_j) + 2\sum_{i,j} \|v^B_i\|^2 K_B(e_i,v^B_j)\\
&\quad\null+ \sum_{i,j} |v^B_i\wedge v^B_j|^2 K_B(v^B_i,v^B_j)+\sum_{i,j} |v^S_i\wedge v^S_j|^2\\
&=s_{p,n+2}(Q) - A^2\omega\omega_{rr}+B^2(1-\beta_r^2)-C^2\beta\beta_{rr}-D^2\beta\beta_r\omega\omega_r+E^2,\\
\end{split}
\end{equation*}
for real-valued functions $A,B,C,D,E$ ranging over the choice of plane $P$. As in the proof of Lemma \ref{bendy}, the second, third, and fourth terms are all nonnegative and of course the sixth term is also nonnegative. The first term is the $(p,n+2)$-intermediate scalar curvature of the boot metric, and by Corollary \ref{bootlambda} this is positive provided we choose $\Lambda$ large enough.

However, the fifth term may have some negativity. This negativity arises only in the region $t\in[-\frac{1}{2},\frac{\pi}{2}+\frac{1}{2}]$, since $\frac{\p \omega_{\Lambda}}{\p r}=0$ off this region. Moreover, the denominator of each of these problem terms is bounded below by $$\Lambda-\max\{|\alpha(r)|:r\in[0,b]\}$$ on this region, while the absolute value of the numerator is bounded by the maximum of $|\alpha_{r}\beta_{r}|$. The functions $\alpha$,  $\beta$, and their derivatives, all of which are bounded, are fixed and independent of $\Lambda$. Since the factor of $D$ in the fifth term depends continuously on the choice of plane $P$ which varies over the compact Grassmannian, $\Grass_p(TR_2)$, there is a choice of $\Lambda$ sufficiently large to minimize the negative impact of these terms and ensure overall positivity of $s_{p,n+m+2}(P)$.
\end{proof}

\section{Proof of Theorem \ref{A}}\label{s6}
\setcounter{figure}{0}

\subsection{The Surgery Theorem of \cite{gromov-lawson} and \cite{schoen-yau} for Positive $(p, n)$-Intermediate Scalar Curvature} 

We begin with a smooth manifold $M$ of dimension $n$. Suppose $\phi:S^{k}\times D^{\ell+1}\to M$ is an embedding, where $n=k+\ell+1$. Recall that a {\em surgery} on $M$, with respect to the embedding $\phi$, is the construction of a manifold, $M_{\phi}$ obtained by removing the image of $\phi$ from $M$ and using the restricted map $\phi|_{S^{k}\times S^{\ell}}$ to attach $D^{k+1}\times S^{\ell}$ along the common boundary.

The {\em trace of the surgery on $\phi$} is the cobordism between $M$ and $M_\phi$, obtained by gluing the cylinder $M\times [0,1]$ to the disk product $D^{k+1}\times D^{\ell+1}$ via the embedding $\phi$. This is done by attaching $M\times\{1\}$ to the boundary component $S^{k}\times D^{\ell+1}$ through the composition $i\circ\phi:S^{k}\times D^{\ell+1}\to M\times\{1\}$ where $i:M\to M\times\{1\}$ is the inclusion $i(x)=(x,1)$. After appropriate smoothing, we obtain the elementary cobordism $\bar{M}_{\phi}$.

Returning to the embedding, $\phi$, we consider the following family of rescaling maps: 
\begin{equation*}
\begin{split}
\sigma_\rho:S^{k}\times D^{\ell+1}&\longrightarrow S^{k}\times D^{\ell+1}\\
(x,y)&\longmapsto (x, \rho y),
\end{split}
\end{equation*}
where $\rho\in(0,1]$.
We then set $\phi_{\rho}:=\phi\circ\sigma_\rho$ and $N_{\rho}:=\phi_{\rho}(S^{k}\times D^{\ell+1})$ and setting $N:=N_{1}$. Thus, for any metric $g$ on $M$ and any $\rho\in(0,1]$, $\phi_{\rho}^{*}g$ is the metric obtained by taking the restriction metric $g|_{N_{\rho}}$, pulling it back via $\phi$ to obtain the metric $\phi^{*}g|_{N_{\rho}}$ on $S^{k}\times D^{\ell+1}(\rho)$ and finally, via the obvious rescaling map $\sigma_\rho$, pulling it back to obtain a metric on $S^{k}\times D^{\ell+1}$. 

The positive scalar curvature Surgery Theorem of  \cite{gromov-lawson} and \cite{schoen-yau} was generalized for positive $(p,n)$-intermediate scalar curvature in \cite{labbi-surgery}. At its heart is the following theorem from \cite{labbi-surgery}.
\begin{theorem}\cite{labbi-surgery}\label{GLLthm}
Let $M$ be a smooth $n$-dimensional manifold and $\phi:S^{k}\times D^{\ell+1}\to M$ an embedding with $k+\ell+1=n$. We further assume that $\ell\geq 2$ and $p\in\{0,1,\cdots,\ell-2\}$. Then for any metric $g$ on $M$ with positive $(p,n)$-intermediate scalar curvature, there is a metric $g_{\std}$ with positive $(p,n)$-intermediate scalar curvature such that:
\begin{enumerate}
\item[(i.)] In the neighborhood $N_{1/2}=\phi_{1/2}(S^{k}\times D^{\ell+1})$,  $g_{\std}$ pulls back to the metric
$$\phi_{1/2}^{*}g_{\std}=ds_{k}^{2}+g_{\tor}^{\ell+1}, \text{ and}$$
\item[(ii.)] Outside $N=\phi(S^{k}\times D^{\ell+1})$, $g_{\std}=g$.
\end{enumerate}
\end{theorem}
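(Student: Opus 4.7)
The plan is to mimic the classical Gromov--Lawson surgery construction, with additional care required to track positivity of $(p,n)$-intermediate scalar curvature rather than merely of scalar curvature. First I would invoke the tubular neighborhood theorem to identify $N=\phi(S^k\times D^{\ell+1})$ with the total space of the normal disk bundle of $\phi(S^k\times\{0\})$ in $M$, placed in normal exponential coordinates. Composing with the rescaling $\sigma_\rho$ and letting $\rho\to 0$ pulls $g$ back to a metric on $S^k\times D^{\ell+1}(\rho)$ that is $C^2$-close to the product model $ds_k^2 + dr^2 + r^2\, ds_\ell^2$. The feature to exploit is that the fiber sphere $\{\ast\}\times S^\ell(r)$ has sectional curvatures of order $1/r^2$, while all other sectional curvatures of the model remain bounded uniformly in $\rho$; this positive $1/r^2$-budget is what will pay for the bend-induced negative curvature to come.

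Next I would introduce a smooth plane curve $\gamma(u)=(r(u),t(u))$, parametrized by arc length, that begins as a vertical ray at $r=\rho$, bends gradually through a quarter-turn, and ends as a horizontal ray at $r=\rho/2$. Over a collar inside the tube, I would replace the vertical slice $\{(x,t):t\in\R\}$ by the translate of $\gamma$ through $x$, producing a hypersurface $M_\gamma\subset M\times\R$ whose induced metric, pulled back to $M$, has the form of a warped product $ds_k^2 + du^2 + h(u)^2\, ds_\ell^2$ to leading order, where $h(u)=r(u)$. By construction, the new metric $g_{\std}$ agrees with $g$ outside $N$ wherever $\gamma$ is vertical, and takes the cylindrical product form $ds_k^2 + dr^2 + (\rho/2)^2\, ds_\ell^2$ wherever $\gamma$ is horizontal. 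Once the cylindrical form is reached, a torpedo factor $g_{\tor}^{\ell+1}(\rho/2)$ can be glued on smoothly along the cylindrical neck to produce the required product structure $\phi_{1/2}^{\ast}g_{\std} = ds_k^2+g_{\tor}^{\ell+1}$ on $N_{1/2}$.

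The heart of the proof is the curvature estimate for $g_{\std}$ in the bending region. Using the warped-product curvature formulas from Section \ref{s4} (with the base augmented by the $S^k$ factor), one expresses the sectional curvatures of $g_{\std}$ in terms of those of the fiber sphere, which scale as $1/r^2$, plus correction terms controlled by $h'(u)$ and $h''(u)$ (equivalently, the geodesic curvature of $\gamma$). Choosing an orthonormal basis of $P^\perp$ for a $p$-plane $P$ and summing as in Definition \ref{spn}, the Grassmann dimension count $\dim(P^\perp\cap TS^\ell)\geq \ell-p$, identical to the one used in the proof of Lemma \ref{isotopyconcordance}, forces at least $\binom{\ell-p}{2}$ sphere--sphere terms of order $1/r^2$ to appear in $s_{p,n}(P)$ whenever $\ell-p\geq 2$. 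This positive contribution, which is guaranteed precisely under the hypothesis $p\leq \ell-2$, dominates every bend-induced negative term provided $\gamma$ is bent slowly enough (geodesic curvature $\ll 1/r$) and $\rho$ is chosen sufficiently small.

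The main obstacle is making this estimate uniform over the compact Grassmann bundle $\Grass_p(TM_\gamma)$, requiring a case analysis of the position of $P$ relative to the $S^\ell$ fiber and the $\gamma$-direction, exactly parallel to Cases 1--3 in the proof of Lemma \ref{isotopyconcordance}. Positivity after capping then follows immediately from Proposition \ref{positivity of torpedo curvature crossed with an interval} applied to the product $ds_k^2+g_{\tor}^{\ell+1}(\rho/2)$, since the additional $S^k$ factor contributes only nonnegative sectional curvatures. Taken together, these steps produce a metric $g_{\std}$ satisfying both (i.) and (ii.) of the theorem.
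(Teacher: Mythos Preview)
The paper does not prove this theorem at all: it is quoted verbatim from Labbi \cite{labbi-surgery} and used as a black box, with the subsequent isotopy refinement (Lemma~\ref{isotopylemma}) likewise imported from Korda{\ss} \cite{kordass}. So there is no ``paper's own proof'' to compare against; your proposal is effectively a sketch of Labbi's original argument.

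As such a sketch, your outline is broadly on target and captures the essential mechanism: the Gromov--Lawson bending hypersurface, the $1/r^2$ fiber-sphere curvature budget, and the Grassmann dimension count $\dim(P^\perp\cap TS^\ell)\geq \ell-p\geq 2$ that guarantees enough positive sphere--sphere terms in $s_{p,n}(P)$ precisely when $p\leq\ell-2$. Two points would need tightening in a full proof. First, the pulled-back metric in normal exponential coordinates is not literally the warped product $ds_k^2+du^2+h(u)^2ds_\ell^2$: the embedded $S^k$ need not be totally geodesic and its induced metric need not be round, so there are $O(1)$ error terms in all sectional curvatures that must be tracked alongside the bend corrections; Labbi (following Gromov--Lawson) handles this by a careful comparison with the model and a sufficiently small choice of $\rho$. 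Second, your appeal to Proposition~\ref{positivity of torpedo curvature crossed with an interval} for the cap is not quite the right reference, since that proposition concerns $g_{\tor}^{n+1}+dt^2$ on $D^{n+1}\times[0,1]$ rather than $ds_k^2+g_{\tor}^{\ell+1}$ on $S^k\times D^{\ell+1}$; the argument you want is the one underlying the product-with-sphere computation in Section~5.5, or a direct adaptation of Proposition~\ref{positivity of torpedo curvature} with the $S^k$ factor contributing only nonnegative curvature.
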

\noindent The metric $g_{\std}$ is thus {\em prepared for surgery} or {\em standardized} on $N_{1/2}$. By removing part of the standard piece taking the form $$(S^{k}\times D^{\ell+1}, ds_{k}^{2}+g_{\tor}^{\ell+1}),$$ and replacing it with $$(D^{k+1}\times S^{\ell}, g_{\tor}^{k+1}+ds_{\ell}^{2}),$$ we obtain a metric $g'$ on $M'$ with positive $(p,n)$-intermediate scalar curvature.

In order to prove our main theorem, we require one more fact: that the metrics $g$ and $g_{\std}$ above are $(s_{p,n}>0)$-isotopic. Proofs of this fact for the case of positive scalar curvature, that is when $p=0$, can be found in Theorem 2.3 of \cite{walsh1} and in \cite{ebert-frenck}. More recently, Kordass \cite{kordass}, proved this fact for a variety of general curvature conditions including positive $(p,n)$-intermediate scalar curvature. In particular, Theorem 3.1 from \cite{kordass}, which we state below, is a special case of his results.
\begin{lemma}[\cite{kordass}]\label{isotopylemma}
Let $M$ be a smooth $n$-dimensional manifold and let $\phi:S^{k}\times D^{\ell+1}\to M$ be an embedding with $k+\ell+1=n$, $\ell\geq 2$ and $p\in\{0,1,\cdots,\ell-2\}$. Then for any metric $g$ on $M$ with positive $(p,n)$-intermediate scalar curvature, there is an isotopy through $(s_{p,n}>0)$-metrics, $g_{t}, t\in[0,1]$ which satisfies the following conditions:
\begin{enumerate}
\item[(i)] $g_t|_{M\setminus N}=g|_{M\setminus N}$ for all $t\in[0,1]$, and
\item[(ii)] $g_{0}=g$ and $g_{1}=g_{\std}$, where $g_{\std}$ is the metric obtained from $g$ by Theorem \ref{GLLthm} above and satisfies
$$ \phi_{1/2}^{*}g_{\std}=ds_{k}^{2}+g_{\tor}^{\ell+1}.$$
\end{enumerate}
\end{lemma}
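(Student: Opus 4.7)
The plan is to realize the Gromov-Lawson-Labbi deformation that produces $g_{\std}$ from $g$ as a continuous $1$-parameter family of metrics, each with positive $(p,n)$-intermediate scalar curvature and each agreeing with $g$ outside $N$. Recall that the proof of Theorem \ref{GLLthm} in \cite{labbi-surgery} proceeds by choosing Fermi coordinates $(x,y)\in S^{k}\times D^{\ell+1}$ on $N$, setting $r=|y|$, and replacing $g|_{N}$ by the metric induced on a hypersurface $M_{\gamma}\subset N\times\mathbb{R}$ cut out by a smooth curve $\gamma$ in the half-plane of $(r,t)$-coordinates. The curve $\gamma$ begins as a horizontal segment along the $r$-axis on the outer (large-$r$) part of $N$, bends upward through a smooth arc of controlled signed curvature, and terminates as a vertical segment over a small value $r=r_{*}>0$; the vertical portion is what produces the torpedo factor in $\phi_{1/2}^{*}g_{\std}=ds_{k}^{2}+g_{\tor}^{\ell+1}$.

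The isotopy will be obtained by continuously deforming this curve. I would construct a smooth family $\{\gamma_{s}\}_{s\in[0,1]}$ with $\gamma_{0}$ the entirely horizontal line $t\equiv 0$ (so that $M_{\gamma_{0}}$ reproduces $g|_{N}$ verbatim) and $\gamma_{1}$ the fully bent curve above (producing $g_{\std}$), each $\gamma_{s}$ coinciding with $\gamma_{0}$ outside a fixed compact subset of the strip. Gluing the induced metric on $M_{\gamma_{s}}$ to $g|_{M\setminus N}$ yields a smooth metric $g_{s}$ on $M$ with $g_{0}=g$, $g_{1}=g_{\std}$, and $g_{s}|_{M\setminus N}=g|_{M\setminus N}$, which is condition (i). The lemma then reduces to checking that every $g_{s}$ has positive $(p,n)$-intermediate scalar curvature. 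For this I would invoke the pointwise sectional-curvature formulas for $M_{\gamma}$ derived in \cite{labbi-surgery} (generalizing \cite{gromov-lawson}): the sectional curvatures at $(x,y,\gamma(u))$ decompose into contributions tangent to the round distance sphere $S^{\ell}(r)$ (of order $1/r^{2}$), contributions involving the horizontal $S^{k}$ factor (bounded in terms of $g$), and mixed terms depending on the signed curvature and tilt angle of $\gamma$. Labbi's estimate shows that for $p\leq\ell-2$, the orthogonal complement of any $p$-plane meets the fiber sphere in at least $\ell-p\geq 2$ directions, whose positive contribution of order $1/r^{2}$ dominates the finitely many possibly negative mixed terms, provided the tube radius is small and the signed curvature of $\gamma$ is uniformly bounded.

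The main obstacle is to arrange the family $\{\gamma_{s}\}$ so that this domination is \emph{uniform in $s$}, that is, so that the Labbi inequality governing $s_{p,n}>0$ on $M_{\gamma_{s}}$ holds strictly along the entire path and not only at the endpoints. Because the inequality is an open condition on the $2$-jet of $\gamma$, and both $\gamma_{0}$ and $\gamma_{1}$ satisfy it, one would connect them by a path $\gamma_{s}$ with a priori bounded signed curvature and tilt angle, performed in two stages by first gradually introducing the initial bend and then lengthening the bent arc until it closes into the vertical segment, so that Labbi's estimate applies simultaneously to the whole family. This is the substance of \cite{kordass}, which establishes the isotopy version of the surgery theorem for a broad class of curvature conditions including $s_{p,n}>0$, and from which Lemma \ref{isotopylemma} follows directly.
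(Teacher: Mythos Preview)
The paper does not prove this lemma at all: it is stated as a citation of Theorem~3.1 in \cite{kordass} and treated as a black box. Your proposal is therefore not competing with any argument in the paper; rather, you are sketching the content of the cited reference. In that sense your final sentence---that the lemma follows directly from \cite{kordass}---is already the entirety of the paper's ``proof.''

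Your outline of the Gromov--Lawson--Labbi bending construction and the idea of deforming the curve $\gamma$ through a one-parameter family $\{\gamma_s\}$ is a fair summary of how the isotopy version of the surgery theorem is obtained (as in \cite{walsh1}, \cite{ebert-frenck}, and \cite{kordass}). The key technical point you identify---that the curvature estimate must be uniform in the deformation parameter $s$---is indeed the heart of the matter, and your description of staging the deformation with controlled signed curvature is the correct strategy. That said, turning this sketch into a rigorous argument requires the careful quantitative control carried out in \cite{kordass} (or, for $p=0$, in \cite{walsh1} and \cite{ebert-frenck}); your proposal correctly acknowledges this by deferring to that reference at the end. Since the paper itself simply invokes \cite{kordass} without any sketch, your write-up is strictly more informative than what appears in the paper, but it is not an independent proof.
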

\noindent We will now make use of this lemma to prove Theorem \ref{A}, which we restate for the sake of the reader.
\\

\begin{theorem} 
Let $M$ be a smooth $n$-dimensional manifold, $\phi:S^{k}\times D^{\ell+1}\to M$, a smooth embedding, and $\{\bar{M}_{\phi}; M, M_{\phi}\}$, the trace of the surgery on $\phi$. Suppose that $\ell\geq 2$ and $p\in\{0,1,\cdots \ell-2\}$. Then for any metric $g$ on $M$ with positive $(p,n)$-intermediate scalar curvature, there are metrics $g_{\phi}$ on $M_{\phi}$ and $\bar{g}_{\phi}$ on $\bar{M}_{\phi}$ satisfying:
\begin{enumerate}
\item
 The metrics $g_{\phi}$ and $\bar{g}_{\phi}$ have respectively positive $(p,n)$ and $(p,n+1)$-intermediate scalar curvature on $M_{\phi}$ and $\bar{M}_{\phi}$, and
\item Near the boundary components $M$ and $M_{\phi}$ of $\bar{M}_{\phi}$, $\bar{g}_{\phi}$ takes the form of respective products $\bar{g}_{\phi}=g+dt^{2}$ and $\bar{g}_{\phi}=g_{\phi}+dt^{2}$.
\end{enumerate}
\end{theorem}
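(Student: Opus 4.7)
The plan is to assemble $\bar{g}_\phi$ in two stages: a concordance on $M\times[0,L+2]$ and a cap on the $(k+1)$-handle of the surgery trace, both of positive $(p,n+1)$-intermediate scalar curvature, glued along a standardized cylindrical collar.

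\emph{Stage 1 (standardize and concord).} First, apply Theorem \ref{GLLthm} to obtain a metric $g_{\std}$ of positive $(p,n)$-intermediate scalar curvature, equal to $g$ outside the tubular neighborhood $N$ and pulling back on $N_{1/2}$ to the explicit product $ds_k^2+g_{\tor}^{\ell+1}(\delta)$. By Lemma \ref{isotopylemma}, $g$ and $g_{\std}$ are $(s_{p,n}>0)$-isotopic via an isotopy constant outside $N$. Proposition \ref{isotoconc} then promotes this isotopy to a concordance $\bar{g}_0$ on $M\times[0,L+2]$ of positive $(p,n+1)$-intermediate scalar curvature, equal to $g+dt^2$ near $t=0$ and to $g_{\std}+dt^2$ near $t=L+2$. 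This immediately realizes the product structure demanded at the $M$-boundary component of $\bar{M}_\phi$.

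\emph{Stage 2 (cap off with a boot).} On the standardized region near $t=L+2$, the concordance $\bar{g}_0$ takes the explicit form $ds_k^2+g_{\tor}^{\ell+1}(\delta)+dt^2$. This matches precisely the cylindrical end of the boot-sphere product $g_{\boot}^{\ell+2}(\delta)_\Lambda + ds_k^2$ constructed in Section \ref{s5}, which by the final proposition of that section has positive $(p,n+1)$-intermediate scalar curvature for $\Lambda$ sufficiently large and $p\le\ell-2=n-k-3$. Use this product-boot metric to extend across the surgery handle $D^{k+1}\times D^{\ell+1}$: the boot's toe region closes the handle off, providing on a collar of the outgoing face $D^{k+1}\times S^\ell$ a product of the form $g_{\tor}^{k+1}(\delta)+ds_\ell^2+dt^2$. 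Combining with the unchanged $g_{\std}$ on $M\setminus N$ yields both $g_\phi$ on $M_\phi$ and the required product collar $g_\phi+dt^2$ near the $M_\phi$-boundary.

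\emph{Main obstacle.} The substantive curvature work has already been done in Section \ref{s5}, where positive $(p,n+1)$-intermediate scalar curvature of the boot-sphere product is established by delicate control of warped-product curvature terms through the bending parameter $\Lambda$ together with small torpedo radius $\delta$. What remains in the proof of Theorem \ref{A} is essentially organizational: verifying that the gluing of Stages 1 and 2 is smooth, which is automatic because both pieces coincide as the explicit product $ds_k^2+g_{\tor}^{\ell+1}(\delta)+dt^2$ on a common collar so no corner-smoothing is required there, and coordinating the scales so that positivity holds globally. The torpedo radius $\delta$ introduced in Theorem \ref{GLLthm} can be shrunk freely (this only improves the torpedo curvature estimates in Proposition \ref{positivity of torpedo curvature}), and $\Lambda$ is then chosen in accordance with $\delta$ via Section \ref{s5} to guarantee positivity of $\bar{g}_\phi$ across the handle.
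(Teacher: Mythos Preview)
Your overall strategy matches the paper's: build a concordance from $g$ to $g_{\std}$ via Lemma~\ref{isotopylemma} and Proposition~\ref{isotoconc}, then attach a boot--sphere product over the standardized region using the curvature results of Section~\ref{s5}. However, Stage~2 as you have written it conflates two distinct steps and contains a genuine geometric error.

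The product $ds_k^2 + g_{\boot}^{\ell+2}(\delta)_\Lambda$ lives on $S^k \times D^{\ell+2}$, not on the handle $D^{k+1}\times D^{\ell+1}$. Attaching it to the concordance only replaces the product metric on $N_{1/2}\times[L+1,L+1+l_3]$; the result is still, topologically, a metric on a cylinder over $M$, not on the trace $\bar{M}_\phi$. Moreover, this metric carries no $g_{\tor}^{k+1}$ factor anywhere: the $S^k$ direction always has the round metric $ds_k^2$. So your claim that ``the boot's toe region closes the handle off, providing \ldots\ a product of the form $g_{\tor}^{k+1}(\delta)+ds_\ell^2+dt^2$'' cannot be correct as stated.

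What the paper actually does after attaching the boot is a further explicit surgery step: excise the toe region (where the metric is $ds_k^2 + g_{\toe}^{\ell+2}$, on a piece diffeomorphic to $S^k\times D^{\ell+2}$) and glue in the handle $D^{k+1}\times D^{\ell+1}$ carrying $g_{\tor}^{k+1}+g_{\tor}^{\ell+1}$. This replacement is smooth because both sides restrict on their common boundary collar to the same product $ds_k^2 + \delta^2 ds_\ell^2 + dt^2$ (the cylindrical necks of the respective torpedo/toe pieces). Only after this excision-and-handle step does one obtain $\bar{M}_\phi$ with the product collar $g_\phi+dt^2$ near the $M_\phi$ boundary. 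You need to make this step explicit; the boot alone does not perform the surgery. (Incidentally, the paper simply takes $\delta=1$ and adjusts only $\Lambda$; shrinking $\delta$ is unnecessary.)
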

\begin{figure}[b]
\vspace{0cm}
\hspace{5cm}
\begin{picture}(0,0)
\includegraphics{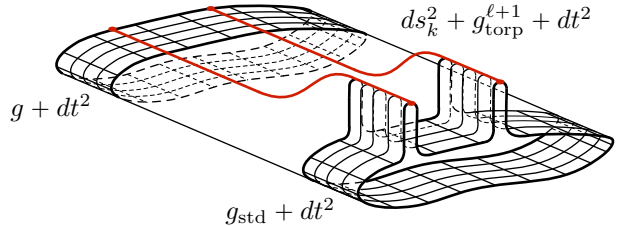}%
\end{picture}
\setlength{\unitlength}{3947sp}%
\begin{picture}(5000,1559)(1902,-7227)
\put(1700,-6550){\makebox(0,0)[lb]{\smash{{\SetFigFont{10}{8}{\rmdefault}{\mddefault}{\updefault}{\color[rgb]{0,0,0}$g+dt^{2}$}%
}}}}
\put(3050,-7200){\makebox(0,0)[lb]{\smash{{\SetFigFont{10}{8}{\rmdefault}{\mddefault}{\updefault}{\color[rgb]{0,0,0}$g_{\std}+dt^{2}$}
}}}}
\put(4150,-6000){\makebox(0,0)[lb]{\smash{{\SetFigFont{10}{8}{\rmdefault}{\mddefault}{\updefault}{\color[rgb]{0,0,0}$ds_{k}^{2}+g_{\tor}^{\ell+1}+dt^{2}$}%
}}}}
\end{picture}%
\caption{The concordance $\bar{g}$}
\label{cobootism5}
\end{figure}
\begin{proof}
We begin by employing Lemma \ref{isotopylemma} to obtain an isotopy, $g_{t}$ for $t\in[0,1]$, between $g_0=g$ and $g_1=g_{\std}$, as defined above. Corollary \ref{isotoconc} gives us a concordance, $\bar{g}$, on a cylinder 
$M\times[0,L+2]$ for some $L>0$ which satisfies the following conditions:
$$ \bar{g}|_{M\times [0,1]}=g+dt^{2}\quad and \quad \bar{g}|_{M\times [L+1,L+2]}=g_{\std}+dt^{2}.$$
This concordance is schematically depicted in Fig. \ref{cobootism5}. 

Consider a boot metric, $g_{\boot}^{\ell+2}(1)_{\Lambda, \bar{l}}$, as depicted in Figure \ref{torpcorners3}, now with $\delta=1$. By Corollary \ref{bootlambda}, we may choose $\Lambda$ to ensure positive $(p,n+1)$-curvature for $p\in\{0, \cdots, \ell-2\}$. Recall here that $\bar{l}$ is a quadruple $(l_1, l_2, l_3, l_4)$ determining the lengths of various sides of the boot metric (see Figure \ref{torpcorners3}). We set $l_1=l_4=1$. Recall that $l_2$ and $l_3$ depend on $\Lambda$ and may be large, so without loss of generality, we may assume that both are greater than $1$. To simplify notation, we refer to this metric henceforth as $g_{\boot}^{\ell+2}$. 

We next extend the collar $M\times [L+1, L+2]$ to $M\times [L+1, L+1+l_3]$ and extend the metric $\bar{g}$ in the obvious way as $g_{\rm std}+dt^{2}$ on this larger cylinder. Consider the restriction of the metric, $\bar{g}$, to the region, $N_{1/2}\times [L+1, L+l_{3}]$. Here, $\bar{g}$ takes the form
$$\bar{g}|_{N_{1/2}\times [L+1, L+1+l_{3}]}=ds_{k}^{2}+g_{\tor}^{\ell+1}+dt^{2}.$$
We replace $\bar{g}|_{N_{1/2}\times [L+1, L+1+l_{3}]}$ on $N_{1/2}\times [L+1, L+1+l_3]$ with the metric
$$ ds_{k}^{2}+g_{\boot}^{\ell+2},$$
as depicted in Figure \ref{cobootism6}, to obtain the metric

\begingroup\singlespacing$$
\bar{g}_{\boot} :=
  \begin{cases}		\bar{g}|_{M\times[0,L+1]} & \text{on $M\times[0,L+1]$,} \\
                                \bar{g}|_{(M\setminus N_{1/2})\times[L+1,L+1+l_3]}  & \text{on $(M\setminus N_{1/2})\times[L+1,L+1+l_3]$, and} \\
                                ds_k^2+g^{\ell+2}_{\boot} & \text{on $N_{1/2}\times[L+1,L+1+l_3]$.} \\
  \end{cases}
$$\endgroup

\begin{figure}[h]
\vspace{1cm}
\hspace{5cm}
\begin{picture}(0,0)
\includegraphics{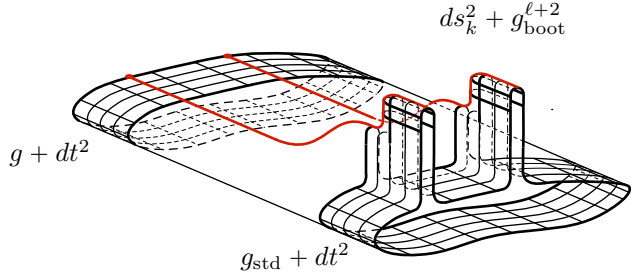}%
\end{picture}
\setlength{\unitlength}{3947sp}%
\begin{picture}(5079,1559)(1902,-7227)
\put(1500,-6550){\makebox(0,0)[lb]{\smash{{\SetFigFont{10}{8}{\rmdefault}{\mddefault}{\updefault}{\color[rgb]{0,0,0}$g+dt^{2}$}%
}}}}
\put(2950,-7200){\makebox(0,0)[lb]{\smash{{\SetFigFont{10}{8}{\rmdefault}{\mddefault}{\updefault}{\color[rgb]{0,0,0}$g_{\std}+dt^{2}$}
}}}}
\put(4200,-5700){\makebox(0,0)[lb]{\smash{{\SetFigFont{10}{8}{\rmdefault}{\mddefault}{\updefault}{\color[rgb]{0,0,0}$ds_{k}^{2}+g_{\boot}^{\ell+2}$}%
}}}}
\end{picture}%
\caption{The metric resulting from ``attaching boots", $\bar{g}_{\boot}$}
\label{cobootism6}
\end{figure}
\newpage

This metric has positive $(p, n+1)$-curvature for all $p\in\{0,1,\cdots \ell-2\}$. In particular, on $N_{1/2}\times [L+l_{3}, L+1+l_3]$, this metric takes the form of a product $g_{\std, l_2}+dt^{2}$, where 
$$g_{\std}=ds_{k}^{2}+g_{\tor}^{\ell+1}(1)_{l_{2}}.$$

In other words, this metric is a cylinder of standard metrics with torpedo necks stretched from length $1$ to length $l_2$. The metric $\bar{g}_{\boot}$ is readymade for surgery.

Returning to the region, ${N_{1/2}\times [L+1, L+1+l_{3}]}$, where the metric  $\bar{g}_{\boot}$ takes the form $ds_{k}^{2}+g_{\boot}^{\ell+1}$, we recall that the boot factor $g_{\boot}^{\ell+1}$ takes the form of a toe, $\hat{g}_{\toe}^{\ell+1}(1)$ (see Figure \ref{BootDetail}), on a subregion. As depicted in Figure \ref{cobootism7}, we replace $ds_{k}^{2}+\hat{g}_{\toe}^{\ell+2}(1)$ with $g_{\tor}^{k+1}+g_{\tor}^{\ell+1}$ by basically ``cutting off the toes'' and introducing a ``handle''.
\begin{figure}[h]
\vspace{1cm}
\hspace{5cm}
\begin{picture}(0,0)
\includegraphics{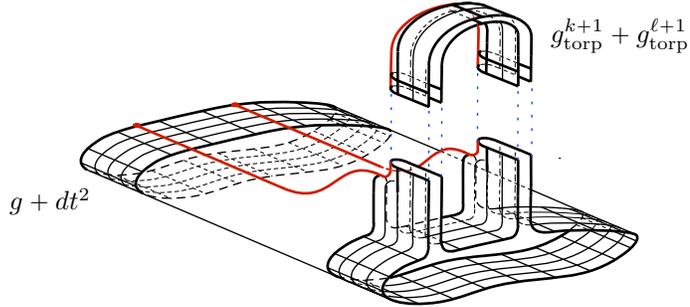}%
\end{picture}
\setlength{\unitlength}{3947sp}%
\begin{picture}(5079,1559)(1902,-7227)
\put(1500,-6550){\makebox(0,0)[lb]{\smash{{\SetFigFont{10}{8}{\rmdefault}{\mddefault}{\updefault}{\color[rgb]{0,0,0}$g+dt^{2}$}%
}}}}
\put(4900,-5500){\makebox(0,0)[lb]{\smash{{\SetFigFont{10}{8}{\rmdefault}{\mddefault}{\updefault}{\color[rgb]{0,0,0}$g_{\tor}^{k+1}+g_{\tor}^{\ell+1}$}%
}}}}
\end{picture}%
\caption{Replacing the toes with a handle.}
\label{cobootism7}
\end{figure}

After gluing on this handle, we obtain the desired metric  $\bar{g}_{\phi}$ on the trace $\bar{M}_{\phi}$ of the surgery as shown in Figure \ref{cobootism8}. 

\begin{figure}[h]
\vspace{1cm}
\hspace{5cm}
\begin{picture}(0,0)
\includegraphics{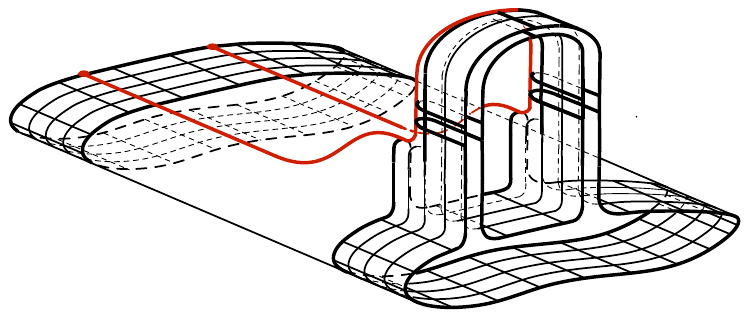}%
\end{picture}
\setlength{\unitlength}{3947sp}%
\begin{picture}(5079,1559)(1902,-7227)
\put(1500,-6550){\makebox(0,0)[lb]{\smash{{\SetFigFont{10}{8}{\rmdefault}{\mddefault}{\updefault}{\color[rgb]{0,0,0}$g+dt^{2}$}%
}}}}
\put(2950,-7200){\makebox(0,0)[lb]{\smash{{\SetFigFont{10}{8}{\rmdefault}{\mddefault}{\updefault}{\color[rgb]{0,0,0}$g_{\phi}+dt^{2}$}
}}}}
\end{picture}%
\caption{The space $(\bar{M}_\phi,\bar{g}_{\phi})$ obtained as the trace of the surgery.}
\label{cobootism8}
\end{figure}

This completes the proof of Theorem \ref{A}.
\end{proof}

\section{The proof of Theorem \ref{B}}\label{s7}
The proof of Theorem \ref{B} follows closely that of Carr in \cite{carr} for the case of positive scalar curvature. There is also a very readable summary of the method of \cite{carr} in section 4.2 of \cite{TW}. We will therefore provide only the relevant information and refer the reader to these sources for further detail. As before, we restate the theorem to aid the reader.
\\

\begin{theorem}
 Let $M$ be a smooth, closed, spin manifold of dimension $4n-1, n\geq 2$, which admits an $s_{p,4n-1}>0$ curvature metric for some $p\in\{0,1,\cdots, 2n-3\}$. Then  $\Riem^{s_{p,4n-1}>0}(M)$ has infinitely many path components.
 \end{theorem}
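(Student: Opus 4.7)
The plan is to adapt Carr's strategy from \cite{carr} to the $s_{p,4n-1}>0$ setting. For each integer $k$ I will construct a metric $g_k \in \Riem^{s_{p,4n-1}>0}(M)$ and then use an index-theoretic obstruction to show that $g_i$ and $g_j$ lie in distinct path components for $i\neq j$. The metrics $g_k$ will arise from $s_{p,4n}>0$ metrics on cobordisms built from the given $g_0$ via Theorem \ref{A}, while the obstruction combines Propositions \ref{isotoconc} and \ref{hierarchyofcurvatures} with the Lichnerowicz--Hitchin $\alpha$-invariant.

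Fix a closed spin $4n$-manifold $B$ with $\alpha(B) \neq 0$ in $KO^{-4n}$, chosen---possibly by handle trading, or as a connected sum of standard generators such as products of quaternionic projective planes or Bott/Joyce manifolds---so that $B \setminus \mathrm{int}(D^{4n})$ admits a handle decomposition whose handles all have index at most $2n$. Since $p \leq 2n-3$, each such handle attachment corresponds to a surgery of codimension at least $2n \geq p+3$, placing it squarely in the regime of Theorem \ref{A}. Starting from $(M \times I,\, g_0 + dt^2)$, repeatedly insert copies of $B \setminus \mathrm{int}(D^{4n})$ into the interior, realized as sequences of surgeries on evolving $(4n-1)$-dimensional slices. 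Iterated application of Theorem \ref{A} produces, for each $k \geq 1$, a cobordism $(\bar W_k, \bar g_k)$ from $(M,g_0)$ to $(M,g_k)$ with $\bar g_k$ of positive $(p,4n)$-intermediate scalar curvature and Riemannian product form $g_0+dt^2$ and $g_k+dt^2$ near the two boundary components, respectively, where $g_k \in \Riem^{s_{p,4n-1}>0}(M)$. For $k<0$ replace $B$ with its orientation reversal.

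Suppose for contradiction that $g_i$ and $g_j$ lie in the same path component of $\Riem^{s_{p,4n-1}>0}(M)$ for some $i \neq j$. By Proposition \ref{isotoconc} they are $(s_{p,4n-1}>0)$-concordant via some $\tilde g$ on $M \times [0,L]$ with product ends. Concatenate $(\bar W_i, \bar g_i)$, $(M \times [0,L], \tilde g)$ and $(-\bar W_j, \bar g_j)$ along their matching $M$-boundaries, and then close the composite cobordism by identifying the two remaining $(M, g_0)$-boundaries via the identity map. The matching product collars at every interior boundary guarantee that the resulting closed spin $4n$-manifold $X_{ij}$ carries a well-defined $s_{p,4n}>0$ metric, and hence a psc metric by Proposition \ref{hierarchyofcurvatures}. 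Lichnerowicz--Hitchin then forces $\alpha(X_{ij}) = 0$. On the other hand, since up to diffeomorphism $X_{ij}$ is a connected sum of $M \times S^1$ with $i$ copies of $B$ and $j$ copies of $-B$, additivity of $\alpha$ under connected sum combined with $\alpha(M \times S^1) = 0$ (via the product formula for the $\hat A$-genus) gives $\alpha(X_{ij}) = (i-j)\,\alpha(B) \neq 0$, a contradiction.

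The principal obstacle is the handle-theoretic requirement at the outset: for each $n \geq 2$, one must realize a nonzero class in $KO^{-4n}$ as the $\alpha$-invariant of a closed spin $4n$-manifold whose punctured version has handles only of index at most $2n$. In Carr's original $p=0$ argument, the codimension-$3$ constraint merely forbids top-dimensional handles, which is automatic; here the constraint is sharp at middle dimension when $p = 2n - 3$, and must be arranged either via a careful choice of Bott-type generator or by a nontrivial handle-trading argument exploiting the high connectivity of such generators. This is precisely the step where our explicit geometric construction requires input that the more abstract stable-homotopic framework of \cite{FK} sidesteps.
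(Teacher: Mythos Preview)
Your overall strategy coincides with the paper's: build $s_{p,4n}>0$ metrics on cobordisms via repeated application of Theorem~\ref{A}, invoke Proposition~\ref{isotoconc} to turn a hypothetical isotopy into a concordance, glue everything into a closed spin $4n$-manifold, and derive a contradiction from a nonvanishing $\alpha$-invariant. The endgame computation $\alpha\bigl((M\times S^1)\#\,\cdots\bigr)\neq 0$ is essentially what the paper carries out.

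The gap is precisely where you flag it, but your diagnosis of it is mistaken. You write that in Carr's $p=0$ argument ``the codimension-$3$ constraint merely forbids top-dimensional handles,'' implying that obtaining handles of index $\leq 2n$ requires new input. It does not: Carr's manifolds $X_q$ are $E_8$-plumbings of the tangent disk bundle of $S^{2n}$ and are homotopy equivalent to finite wedges of $2n$-spheres, so $W_q=X_q\setminus D^{4n}$ admits a Morse function whose critical points \emph{all} have index exactly $2n$. Every surgery is therefore in codimension $2n=(2n-3)+3$, which is why the bound $p\leq 2n-3$ is sharp and why no handle-trading is needed. The paper simply reuses Carr's $X_q$ verbatim. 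By contrast, your proposed generators fail in general: a $k$-fold product of $\mathbb{HP}^2$ (dimension $4n=8k$) has cells in dimensions $4,8,\dots,8k-4$, so $B\setminus D^{4n}$ carries handles of index up to $4n-4>2n$ once $n>2$, and these cannot be traded away since the corresponding homology is nonzero; Bott and Joyce manifolds live only in dimension $8$. The correct $B$, if you insist on a closed manifold, is a capped-off Carr plumbing, but it is cleaner to work directly with the $X_q$ as cobordisms, as the paper does.

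A minor structural difference: the paper first treats $M=S^{4n-1}$ and then passes to arbitrary $M$ by a slicewise connected sum of $W_q$ with $M\times I$ along embedded arcs, whereas you attempt to go directly to general $M$. Both routes close up to the same manifold $\bar X_{q_0,q_1}\#(M\times S^1)$, and both compute $\alpha(M\times S^1)=0$ from the fact that $M$ (hence $M\times S^1$) carries a psc-metric.
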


\begin{proof}
 We begin by considering the case where $M$ is the sphere, $S^{4n-1}$ for some $n\geq 2$. In section 4 of \cite{carr}, the author constructs an infinite collection, $X_q$, $q\in\{1,2,\cdots\}$,  with the following properties:
 \begin{enumerate}
\item $X_q$ is a smooth $4n$-dimensional manifold with boundary $\p X_{q}$ diffeomorphic to $S^{4n-1}$, the standard sphere.
\item $X_q$ is homotopy equivalent to a finite wedge of $2n$-spheres. 
\item For $q_0\neq q_1$, the closed manifold $\bar{X}_{q_0, q_1}:=X_{q_0}\cup(S^{4n-1}\times I)\cup X_{q_1}$, where $I=[0,1]$ and obtained by gluing along common boundaries, has $\alpha(\bar{X}_{q_0, q_1})\neq 0$. Thus $\bar{X}_{q_0, q_1}$ admits no metrics of positive scalar curvature.
\end{enumerate}
These manifolds are constructed using the technique of {\em plumbing} disk bundles of the tangent bundle $TS^{4n}$ with respect to certain graphs, see, for example, section 4.2.1 of \cite{TW}. It is well known that performing this construction with respect to the graph which is the Dynkin diagram of the exceptional Lie group $E_8$ results in a smooth manifold, $X_{E_8}$, whose boundary is homeomorphic to the sphere $S^{4n-1}$; see, for example, Ch. VI, Sec. 12 in \cite{Kozinski}. In the case of $X_{q}$, the plumbing construction is in accordance with graphs based on $q\theta_{4n-1}$ copies of the Dynkin diagram for $E_8$, where $\theta_{4n-1}$ is the order of the boundary, $\p X_{E_8}$, in the group, $\Theta_{4n-1}$, of homeomorphic smooth $(4n-1)$-spheres. 

Let $W_q:=X_q\,\setminus \,D^{4n}$ denote the result of removing a disk, $D^{4n}$, from the interior of $X_q$. Thus, we obtain a cobordism, $\{W_q; \p D^{4n}, \p X_q\}$, where each boundary component is diffeomorphic to $S^{4n-1}$. Making use of an appropriate Morse function, we may decompose this into a finite union of elementary cobordisms:
$$ \{W_q; S_0^{4n-1}=\p D^{4n}, S_1^{4n-1}=\p X_q\}=\{Y_1;S_0^{4n-1}, S_1^{4n-1}\}\cup \{Y_2;S_1^{4n-1}, S_2^{4n-1}\}\cup\cdots \cup\{Y_m;S_{m-1}^{4n-1}, S_{m}^{4n-1}\},$$
where $S_{0}^{4n-1}$ is the boundary of the $4n$-dimensional disk we removed from the interior of $X_q$ and $S_m^{4n-1}=\p X_q$. Property $2$ above means that each elementary cobordism is the trace of a surgery on a $(2n-1)$-dimensional embedded sphere. Thus, each surgery takes place in codimension $(4n-1)-(2n-1)=2n$.

Equipping $S_0^{4n-1}$ with the 
round metric, $ds_{4n-1}^{2}$, and fixing collars near the boundaries of the individual elementary cobordisms, we repeatedly apply the geometric trace construction of Theorem \ref{A} to obtain a Riemannian metric, $\bar{g}_q$ on $W_q$. Given that all surgeries are in codimension $2n$, we can ensure that the resulting metric, $\bar{g}_q$, has $s_{p, 4n}>0$ for $p=2n-3$ and hence all $p\in\{0,1,\cdots 2n-3\}$. Moreover, $\bar{g}_q$ takes a product structure near the boundary and the restriction $g_q=\bar{g}|_{\p X_q}$ is a metric on $S^{4n-1}$, with $s_{p, 4n-1}>0$. 

Suppose now that we apply this procedure for a pair $q_0\neq q_1$. For each $i=0,1$, we can now form $s_{p, 4n}>0$ metrics, $\bar{h}_{q_i}$ on $X_{q_i}$, from $\bar{g_i}$ on $W_{q_i}$, by simply replacing the previously removed interior $4n$-disks and equipping the replacement disks with torpedo metrics $g_{\tor}^{4n}$. Thus, $$(X_{q_i}, \bar{h}_{q_i})=(W_{q_i}, \bar{g}_{q_i})\cup (D^{4n}, g_{\tor}^{4n}).$$ The product structure near the 
round boundary $(4n-1)$-spheres mean these attachments are smooth.

If $g_{q_0}$ and $g_{q_1}$ lie in the same path component of $\Riem^{s_{p,4n-1}>0}(S^{4n-1})$, there is an $(s_{p,4n-1}>0)$-isotopy and so, by Lemma \ref{isotopyconcordance}, an $(s_{p,4n-1}>0)$-concordance between them. We denote this concordance, $\bar{h}_{q_0, q_1}$. This allows us to construct the closed Riemannian manifold $$(\bar{X}_{q_0, q_1}, \bar{g}_{q_0, q_1}):=(X_{q_0}, \bar{h}_{q_0})\cup(S^{4n-1}\times I, \bar{h}_{q_0, q_1})\cup (X_{q_1}, \bar{h}_{q_1}),$$
depicted below in Fig. \ref{CarrClosedManif}.

\begin{figure}[h]
\vspace{2cm}
\hspace{-3cm}
\begin{picture}(0,0)
\includegraphics{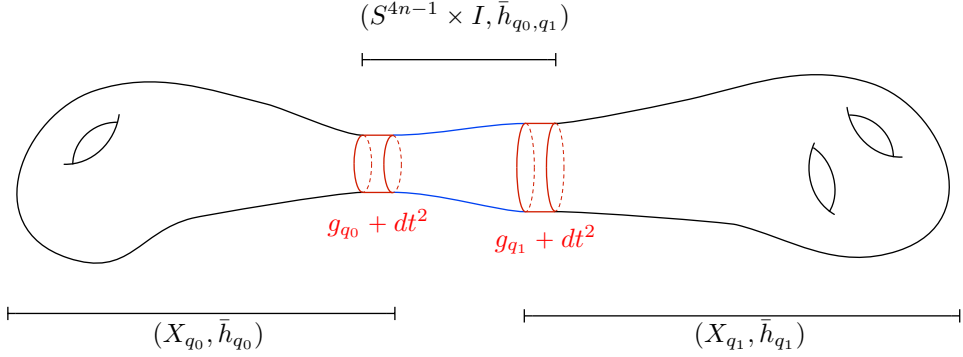}%
\end{picture}
\setlength{\unitlength}{3947sp}%
\begin{picture}(5079,1559)(1902,-7227)
\put(2950,-7200){\makebox(0,0)[lb]{\smash{{\SetFigFont{10}{8}{\rmdefault}{\mddefault}{\updefault}{\color[rgb]{0,0,0}$(X_{q_0}, \bar{h}_{q_0})$}
}}}}
\put(6350,-7200){\makebox(0,0)[lb]{\smash{{\SetFigFont{10}{8}{\rmdefault}{\mddefault}{\updefault}{\color[rgb]{0,0,0}$(X_{q_1}, \bar{h}_{q_1})$}
}}}}
\put(4250,-5200){\makebox(0,0)[lb]{\smash{{\SetFigFont{10}{8}{\rmdefault}{\mddefault}{\updefault}{\color[rgb]{0,0,0}$(S^{4n-1}\times I, \bar{h}_{q_0,q_1})$}
}}}}
\put(4050,-6500){\makebox(0,0)[lb]{\smash{{\SetFigFont{10}{8}{\rmdefault}{\mddefault}{\updefault}{\color[rgb]{1,0,0}$g_{q_0}+dt^{2}$}
}}}}
\put(5100,-6600){\makebox(0,0)[lb]{\smash{{\SetFigFont{10}{8}{\rmdefault}{\mddefault}{\updefault}{\color[rgb]{1,0,0}$g_{q_1}+dt^{2}$}
}}}}
\end{picture}%
\caption{The implied Riemannian manifold $\bar{X}_{q_0, g_1}, \bar{g}_{q_0, g_1}$}
\label{CarrClosedManif}
\end{figure}

The metric $\bar{g}_{q_0, g_1}$ is a union of $s_{p,4n}>0$ metrics and so is itself an $s_{p,4n}>0$ metric for $p\in\{0,1,\cdots, 2n-3\}$. However this is impossible since, by Property $3$ above, $\bar{X}_{q_0, q_1}$ does not  admit positive scalar curvature metrics when $q_0\neq q_1$, and hence admits no $s_{p, n}>0$ metric for any $p\geq 0$. Thus, there is no isotopy between $g_{q_0}$ and $g_{q_1}$ and, by implication, the space $\Riem^{s_{p,4n-1}}(S^{4n-1})$ has infinitely many path components. This proves Theorem \ref{B} for the case $M=S^{4n-1}$.

We now consider the more general case of a closed smooth spin manifold, $M$, of dimension $4n-1$, which admits an $s_{p,4n-1}>0$ metric, $g_{M}$, for $p\in\{0,1,\cdots, 2n-3\}.$ For any $s_{p,4n-1}>0$ metric, $g$, on the sphere $S^{4n-1}$, the connected sum metric, $g_{M}\# g$, obtained by applying the surgery construction in Theorem \ref{A}, is an $s_{p,4n-1}>0$ metric also. We will show that for any pair $q_0\neq q_1$, the metrics $g_{M}\# g_{q_0}$ and $g_{M}\# g_{q_1}$, where $g_{q_0}$ and $g_{q_1}$ are the metrics constructed above, lie in different path components of $\Riem^{s_{p,4n-1}>0}(M)$. 

For each $i=0,1$, let $W_{q_i}$ denote the manifold obtained above by removing a $4n$-dimensional disk, $D^{4n}$, from the interior of $X_{q_i}$. Thus, $\p W_{q_i}=S_{i0}^{4n-1}\sqcup S_{i1}^{4n-1}$, where $S_{i0}^{4n-1}=\p D^{4n}$ and $S_{i1}^{4n-1}=\p X_{q_i}$. For $i,j=0,1$, we let the maps: 
$\tau_{ij}:S^{4n-1}\times [0,\epsilon)\longrightarrow W_{q_i},$
where $\tau_{ij}(S^{4n-1}\times\{0\})=S_{ij}^{4n-1}$, denote the disjoint collar neighborhood embeddings, employed in the metric construction of Theorem \ref{A}. Choose path embeddings, $\gamma_i:[0,1]\rightarrow W_{q_i}$, $i=0,1$, satisfying the following compatibility conditions:
\begin{itemize}
\item the endpoints of $\gamma_i$ satisfy $\gamma_{i}(0)\in S_{i0}^{4n-1}$ and $\gamma_{i}(1)\in S_{i1}^{4n-1}$,
\item when $t$ is near $0$, $\tau_{i0}^{-1}\circ\gamma_{i}(t)=t$ while, when $t$ is near $1$, $\tau_{i1}^{-1}\circ\gamma_{i}(t)=1-t$.
\end{itemize}
Finally, we specify a path $\gamma:I\rightarrow M\times I$, defined $\gamma(t)=(x_0,t)$ for some fixed point $x_0\in M$. By removing small tubular neighborhoods around $\gamma_i$ and $\gamma$, we perform a slicewise-connected sum along these embedded paths to obtain $Z_{q_i}:=W_{q_i}\# M\times I$, as depicted in Fig. \ref{SWconnsum}. We will assume that in $Z_{q_i}$, our slicewise connected sum construction associates $S_{ij}^{4n-1}$ with $M\times\{j\}$. Thus, $Z_{q_i}$ is a manifold whose boundary is a disjoint union of two diffeomorphic copies of $M$.

\begin{figure}[h]
\vspace{3cm}
\hspace{-3cm}
\begin{picture}(0,0)
\includegraphics{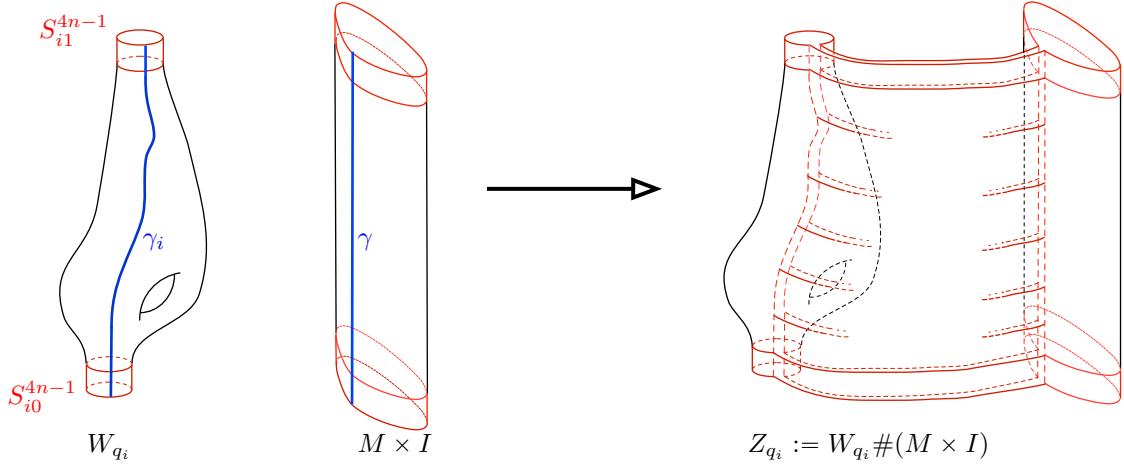}%
\end{picture}
\setlength{\unitlength}{3947sp}%
\begin{picture}(5079,1559)(1902,-7227)
\put(1900,-4700){\makebox(0,0)[lb]{\smash{{\SetFigFont{10}{8}{\rmdefault}{\mddefault}{\updefault}{\color[rgb]{1,0,0}$S_{i1}^{4n-1}$}
}}}}
\put(1700,-7000){\makebox(0,0)[lb]{\smash{{\SetFigFont{10}{8}{\rmdefault}{\mddefault}{\updefault}{\color[rgb]{1,0,0}$S_{i0}^{4n-1}$}
}}}}
\put(2200,-7300){\makebox(0,0)[lb]{\smash{{\SetFigFont{10}{8}{\rmdefault}{\mddefault}{\updefault}{\color[rgb]{0,0,0}$W_{q_i}$}
}}}}
\put(2550,-6000){\makebox(0,0)[lb]{\smash{{\SetFigFont{10}{8}{\rmdefault}{\mddefault}{\updefault}{\color[rgb]{0,0,1}$\gamma_{i}$}
}}}}
\put(3900,-6000){\makebox(0,0)[lb]{\smash{{\SetFigFont{10}{8}{\rmdefault}{\mddefault}{\updefault}{\color[rgb]{0,0,1}$\gamma$}
}}}}
\put(3900,-7300){\makebox(0,0)[lb]{\smash{{\SetFigFont{10}{8}{\rmdefault}{\mddefault}{\updefault}{\color[rgb]{0,0,0}$M\times I$}
}}}}
\put(6350,-7300){\makebox(0,0)[lb]{\smash{{\SetFigFont{10}{8}{\rmdefault}{\mddefault}{\updefault}{\color[rgb]{0,0,0}$Z_{q_i}:=W_{q_i}\# (M\times I)$}
}}}}

\end{picture}%
\caption{Constructing the manifold $Z_{q_i}$ as a slicewise connected sum (with respect paths $\gamma_i$ and $\gamma$) of $X_{q_i}$ and $M\times I$  }
\label{SWconnsum}
\end{figure}
 
By Theorem \ref{A} (with respect to the collars $\tau_{ij}$), we have $s_{p,n}>0$ metrics, $\bar{g}_{q_i}$ on $W_{q_i}$ for each $i=0,1$. Each metric, $\bar{g}_{q_i}$, has a product structure near the boundary and, we assume, restricts as $ds_{4n-1}^{2}$ near $S_{i0}^{4n-1}$ and $g_{q_i}+dt^{2}$ near $S_{i1}^{4n-1}$. Employing the technique of Theorem \ref{A} over the slicewise connected sum leads to a metric, $\bar{g}_{Z_{q_i}}$, on $Z_{q_i}$ which has the following properties: 
\begin{itemize}
\item The metric $\bar{g}_{Z_{q_i}}$ is $s_{p,4n}>0$ and has a product structure near the boundary, 
\item The metric $\bar{g}_{Z_{q_i}}$ takes the form, $ds_{4n-1}^{2}\# g_{M}$, on the $S_{i0}^{4n-1}\# M\times\{0\}\cong M$ boundary component. 
\end{itemize}
The first of the properties is ensured by the earlier compatibility conditions on the paths, $\gamma_i$, which ensure the connected sum construction is constant near the boundary.

Thus, we can form the Riemannian manifold with boundary, $$(Z_{q_0, q_1}, \bar{g}_{Z_{q_0, q_1}}):=(Z_{q_0}, \bar{g}_{Z_{q_0}})\cup (Z_{q_1}, \bar{g}_{Z_{q_1}}),$$
by gluing together the $S_{00}^{4n-1}$ and $S_{10}^{4n-1}$ boundary components where both metrics agree. By construction this metric is $s_{p,4n}>0$, has a product structure near its boundary and restricts respectively on its two spherical boundary components, $S_{01}^{4n-1}$ and $S_{11}^{4n-1}$, as $g_{q_0}$ and $g_{q_1}$. As before, if $g_{q_0}$ and $g_{q_1}$ are $(s_{p, 4n-1}>0)$-isotopic, there exists an $(s_{p, 4n-1}>0)$-concordance on the cylinder, $(S^{4n-1}\times I, \bar{h}_{q_0, q_1})$. Attaching this cylinder to $Z_{q_0, q_1}$, so that each boundary of the cylinder is attached to one of the boundary components of $Z_{q_0, q_1}$,  gives rise to a closed, $s_{p,4n}>0$ Riemannian manifold: 
$$(Y_{q_0, q_1}, \bar{g}_{Y_{q_0, q_1}}):=(Z_{q_0, q_1}, \bar{g}_{Z_{q_0, q_1}})\cup (S^{4n-1}\times I, \bar{h}_{q_0, q_1}).$$
Now $Y_{q_0, q_1}$ is easily seen to be diffeomorphic to the connected sum $X_{q_0, q_1}\#  (M\times S^{1})$. The additive property of the $\alpha$-invariant over connected sums implies that:
$$ \alpha(Y_{q_0, q_1})=\alpha(X_{q_0, q_1}\#  (M\times S^{1}))=\alpha(X_{q_0, q_1})+\alpha(M\times S^1)=\alpha(X_{q_0, q_1})+0\neq 0.$$ The summand, $\alpha(M \times S^{1})$, vanishes since $M$ admits a psc-metric and hence, so does $M\times S^{1}$.
Thus, $Y_{q_0, q_1}$ admits no metric of positive scalar curvature and so we have a contradiction. Therefore, the metrics $g_{M}\# g_{q_0}$ and $g_{M}\# g_{q_1}$ lie in different path components of $\Riem^{s_{p,4n-1}>0}(M)$.
\end{proof}


\begin{thebibliography}{99}

\bibitem{besse} A.L. Besse, {\it Einstein Manifolds}. Springer-Verlag, 1987.

\bibitem{BERW} B. Botvinnik, J. Ebert, O. Randal-Williams, {\em Infinite Loop Spaces and Positive Scalar Curvature,} {Inventiones Math.}, {\bf  209} no. 3 (2017), 749--835.

\bibitem{BHSW} B. Botvinnik, B. Hanke, T. Schick, M. Walsh, {\em Homotopy groups of the moduli space of metrics of positive scalar curvature,} {Geom. Topol., {\bf 14} no. 4  (2010), 2047--2076.}

\bibitem{brozos} M. Brozos-V\'azquez, E. Garc\'ia-R\'io, R. V\'azquez-Lorenzo, {\em Warped Product Metrics and Locally Conformally Flat Structures},  Mathem\'atica Contempor\^anea,  {\bf 28} (2005),    91--110.

\bibitem{B} M. Burkemper, {\em Positive $(p, n)$-intermediate scalar curvature and Gromov-Lawson cobordism}, PhD Thesis, Wichita State University, 2021.


\bibitem{carr} R. Carr, {\em Construction of manifolds of positive scalar curvature}, {Trans. Amer. Math. Soc.,}  {\bf 307} no. 1 (1988),    63--74.

\bibitem{chernysh} V. Chernysh, {\em On the Homotopy Type of the Space $\RiemScal(M)$}, preprint, arXiv:math.GT/0405235.

\bibitem{Coda} F. Coda Marques, {\em Deforming three-manifolds with positive scalar curvature}, Annals of Math.  {\bf  176} no. 2 (2012),  815--863.

\bibitem{CS} D. Crowley and T. Schick, {\em The Gromoll filtration, KO-characteristic classes and metrics of positive scalar curvature}, {Geom. Topol., {\bf 17} no. 3 (2013),1773--1789.}

\bibitem{ebert-frenck} J. Ebert, G. Frenck, {\em The Gromov-Lawson-Chernysh Surgery Theorem},  preprint, arXiv:math.DG/1807.06311v2 (2018). 

\bibitem{ERW1} J. Ebert, O. Randal-Williams, {\em Infinite Loop Spaces and Positive Scalar Curvature in the presence of a fundamental group,} Geom. Topol., {\bf 23} no. 3 (2019), 1549--1610.

\bibitem{ERW2} J. Ebert, O. Randal-Williams, {\em The Positive Scalar Curvature Cobordism Category,} {preprint, arXiv:math:1904.12951} (2019). 

\bibitem{EW} J. Ebert, M. Wiemeler, {\em On the homotopy type of the space of metrics of positive scalar curvature,} {preprint, arXiv:2012.00432} (2020)

\bibitem{Frenck} G. Frenck, {\em The action of the mapping class group on metrics of positive scalar curvature.} Math. Ann. (2021), https://doi.org/10.1007/s00208-021-02235-1.



\bibitem{FK} G. Frenck and B. Kordass, {\em Spaces of positive intermediate curvature metrics}, preprint, arXiv:2011.11388  (2020).

\bibitem{gajer} P. Gajer, {\em Riemannian Metrics of Positive Scalar Curvature on Compact Manifolds with Boundary}, {Ann. Global Anal. Geom.},  {\bf 5} no. 3 (1987),    179--191.	
\bibitem{gray} A. Gray, {\em Pseudo-Riemannian almost product manifolds and submersions}, J. Math. Mech., {\bf 16} (1967),     715--737.

\bibitem{gromov-lawson} M. Gromov, H. Lawson. {\em The Classification of Simply Connected Manifolds of Positive Scalar Curvature,} Annals of Mathematics, {\bf 111} (1980),    423--434.

\bibitem{HSS} B. Hanke, T. Schick and W. Steimle, {\em The Space of Metrics of Positive Scalar Curvature}, {Publ. Math. IHES.} {\bf  120} no. 1 (2014) 335--367.

\bibitem{Hit} N. Hitchin, {\em Harmonic Spinors,} Advances in Math., {\bf 14} (1974), 155.

\bibitem{Ho} S. Hoelzel, {\em Surgery Stable Curvature Conditions,} {Math. Ann.}, {\bf  365} no.  1 (2016), 13--47.

\bibitem{kordass} J. Kordass, {\em On the Space of Riemannian Metrics satisfying Surgery Stable Conditions},  preprint, arXiv:1808.00581v2 (2018).


\bibitem{Kozinski} A. Kozinski, {\em Differentiable Manifolds}, Dover 2007.

\bibitem{labbi-surgery} M. Labbi, {\em Stability of the $p$-curvature positivity under surgeries and manifolds with positive Einstein tensor}, Annals of Global Analysis and Geometry, {\bf 15} (1997),    299--312.

\bibitem{lee} J. M. Lee, {\em Introduction to Smooth Manifolds}, Springer-Verlag, 2003.

\bibitem{lichnerowicz} A. Lichnerowicz, {\em Spineurs harmoniques}, C. R. Acad. Sci Paris,  {\bf  257} (1963),  7--9.

\bibitem{lohkamp} J. Lohkamp, {\em The Space of Negative Scalar Curvature Metrics}, Inventiones Mathematicae,  no. 110 (1992), 403--407.


\bibitem{oneill} B. O'Neill, {\em The fundamental equations of a submersion}, Mich. Math. J., {\bf  13} (1966), 459--469.

\bibitem{petersen} P. Petersen, {\em Riemannian Geometry 2nd Edition}, Springer, 1998.

\bibitem{rosenberg-stolz} J. Rosenberg, S. Stolz, {\em Metrics of positive scalar curvature and connections with surgery},  { Surveys on Surgery Theory}, {\bf 2}, Ann. of Math. Studies 149, Princeton Univ. Press, 2001.

\bibitem{schoen-yau} R. Schoen, S. Yau, {\em On the Structure of Manifolds With Positive Scalar Curvature},  Manuscripta Math, {\bf 28} ( 1979),    159--183.

\bibitem{stolz} S. Stolz, {\em Simply Connected Manifolds of Positive Scalar Curvature},  Annals of Mathematics, 1992, {\bf 136},    511--540.

\bibitem{TW} W. Tuschmann and D. Wraith, {\em Moduli Spaces of Riemannian Metrics,} { Birkhauser,} Oberwolfach Seminars {\bf 46} (2010). 


\bibitem{walsh3} M. Walsh, {\em Cobordism invariance of the homotopy type of the space of positive scalar curvature metrics}, Proc. Amer. Math. Soc., {\bf 141} no. 7 (2013),    2475--2484.

\bibitem{walsh1} M. Walsh, {\em Metrics of Positive Scalar Curvature and Generalised Morse Functions, Part I}, Memoirs of the American Mathematical Society,  {\bf 209} no. 983 (2011).

\bibitem{walsh2} M. Walsh, {\em Metrics of Positive Scalar Curvature and Generalised Morse Functions, Part II}, Trans. Amer. Math. Soc.,  {\bf 366} no. 1 (2014).

\bibitem{walshNY} M. Walsh, {\em The Space of Positive Scalar Curvature Metrics on a Manifold With Boundary},  New York J. Math., {\bf 26} (2020),    853--930.

\bibitem{walshloops} M. Walsh, {\em H-spaces, loop spaces and positive scalar curvature}, Geom. Top.
{\bf 18} no. 4 (2014), 2189--2243.

\bibitem{Wolfson} J. Wolfson, {\em Manifolds with $k$-positive Ricci curvature}, Variational problems in Differential Geometry,
edited by R. Bielawski, et al., LMS Lecture Notes Series 394, Cambridge University Press 2012.

\bibitem{Wraith} D. J. Wraith, {\em On the moduli space of positive Ricci curvature metrics on homotopy spheres}, Geom. Topol. {\bf 15.} (2011), 1983-2015 

\end{thebibliography}
\end{document}